\documentclass[11pt]{amsart}
\usepackage{amsthm,amssymb,amsmath,soul,bropd,float,setspace,enumerate,ragged2e,setspace}
\usepackage{color,verbatim}
\usepackage{hyperref}
\usepackage{tikz}
\setlength{\parindent}{0.25in}
\setlength{\parskip}{0.2cm}
\linespread{1.25}

\usepackage[top=1in, left=0.8in, right=0.8in, bottom=1in, marginpar=2cm]{geometry}


\def\XXint#1#2#3{{\setbox0=\hbox{$#1{#2#3}{\int}$ }
		\vcenter{\hbox{$#2#3$ }}\kern-.5\wd0}}


\newtheorem{theorem}{Theorem}[section]
\newtheorem{lemma}[theorem]{Lemma}
\newtheorem{proposition}[theorem]{Proposition}
\newtheorem{corollary}[theorem]{Corollary}

\newtheorem{remark}[theorem]{Remark}

\newtheorem{assumption}[theorem]{Assumption}
\newtheorem{definition}{Definition}[section]

\def\d{\delta}

\def\F{\Phi}

\def\0{\varnothing}
\def\1{\left(}
\def\2{\right)}
\def\3{\left[}
\def\4{\right]}
\def\5{\left\{}
\def\6{\right\}}
\def\8{\infty}

\def\gg{\geqslant}

\begin{document}
\title[Non-existence of horizontally flat singularity]{The non-existence of horizontally flat singularity for steady axisymmetric free surface flows near stagnation points}

\author{Lili Du}
\address[]{School of Mathematical Sciences, Shenzhen University, Shenzhen, P. R. China}
\email{dulili@szu.edu.cn}
\thanks{The first author is supported by National Nature Science Foundation of China Grant 11971331, 12125102, and Sichuan Youth Science and Technology Foundation 2021JDTD0024.}

\author{Chunlei Yang}
\address[]{Department of Mathematics, Sichuan University, Chengdu, P. R. China}
\email{yang\_chunlei@stu.scu.edu.cn}
\subjclass[2020]{Primary: 35Q35, 35R35, Secondary: 76B47.}
\begin{abstract}
	In a recent research on degenerate points of steady axisymmetric gravity flows with general vorticity, it has been shown that the possible asymptotics near any stagnation point must be the “Stokes corner”, the “horizontal cusp”, or the “horizontal flatness”(Theorem 1.1, Du, Huang, Pu, \emph{Commun. Math. Phys.}, 400, 2137-2179, 2023). In this paper, we focus on the horizontally flat singularity and show that it is \emph{not possible}, and therefore the “Stokes corner” and the “cusp” are the only possible asymptotics at the stagnation points. The basic idea of our proof relies on a perturbation of the \emph{frequency formula} for the two-dimensional problem (V\v{a}rv\v{a}ruc\v{a}, Weiss, \emph{Acta Math.}, 206, 363-403, 2011). Our analysis also suggests that, for steady axisymmetric rotational gravity flows, the singular asymptotic profiles at stagnation points are similar to the scenario observed in two-dimensional waves with vorticity. (V\v{a}rv\v{a}ruc\v{a}, Weiss, \emph{Ann. I. H. Poincar\'{e}-AN}, 29, 861-885, 2012).
\end{abstract}

\maketitle
\begin{spacing}{0.01}
	\tableofcontents
\end{spacing}
\section{Introduction}
The model problem we have in mind is the following semilinear Bernoulli-type free boundary problem, which describes an incompressible axisymmetric rotational flow acted on by gravity and with a free surface,
\begin{equation}\label{Formula: model problem}
	\begin{cases}
		\displaystyle\operatorname{div}\br{\frac{1}{x}\nabla\psi} = -xf(\psi) & \text{ in } \Omega\cap\{\psi > 0\}, \\
		\displaystyle\frac{1}{x^{2}}|\nabla\psi|^{2} = -y & \text{ on } \Omega\cap\partial \{\psi > 0\}.
	\end{cases}
\end{equation}
Here $\psi$ is the Stokes stream function and $\Omega$ is a connected open subset relative to the right half-plane $\mathbb{R}_{+}^{2}=\{(x,y)\in\mathbb{R}^{2}:x\geqslant0\}$ (This implies that $(\Omega\cap\{x=0\})\cap\partial\Omega=\varnothing$). The given function $f$ in the first equation of \eqref{Formula: model problem} represents the strength of the vorticity, and $\Omega\cap\partial\{\psi>0\}$ denotes the free surface of the flow. Physically the equation \eqref{Formula: model problem} arises for example as the “axially symmetric rising jet” or “a bubble within a semi-infinitely long cylindrical tube” and we kindly refer readers to \cite[Section 2]{DHP2022} for a comprehensive description.

Regarding \eqref{Formula: model problem} as a free boundary problem, our primary interest lies in the geometry profiles of the free surface at which the relative fluid velocity $(\frac{1}{x}\psi_{y},-\frac{1}{x}\psi_{x})$ is the zero vector, and the regularity of the free surface when it is not the zero vector. In the formal case, a glance of the second equation in \eqref{Formula: model problem} indicates that such points are distributed along the axis of symmetry $\{x=0\}$ or the set $\{y=0\}$. As for the latter case, it is evident that at those free boundary points that are away from both the axis of symmetry and the set $\{y=0\}$, the gradient of $\psi$ can never reduce to zero. Due to the degeneracy or the non-degeneracy of the free boundary condition, we classify them as the set of stagnation points $S_{\psi}^{s}$ (Type I points, which includes those free boundary points $(x_{0},0)$, $x_{0}>0$), the set of symmetric axis points $S_{\psi}^{a}$ (Type II points, which includes free boundary points $(0,y_{0})$, $y_{0}<0$), the origin $O=(0,0)$ (which can be regarded as the intersection of Type I and Type II points), and the set of non-degenerate points $N_{\psi}$ (which consists of points on the free boundary points $(x_{0},y_{0})$, $x_{0}>0$, $y_{0}<0$).
We now briefly review some results known to us with respect to the singularity and regularity of the free surface near the degenerate points (Type I, Type II and the origin $O$) and non-degenerate points ($N_{\psi}$). In \cite{DHP2022}, the first author and et al. demonstrated that the possible singular asymptotics for any point in $S_{\psi}^{s}$ are the “Stokes corner” wherein the free surface exhibits a corner of $\frac{2}{3}\pi$, a “horizontal cusp”, or a “horizontal flatness” (Figure \ref{Fig: asymptotics for typeI point}).
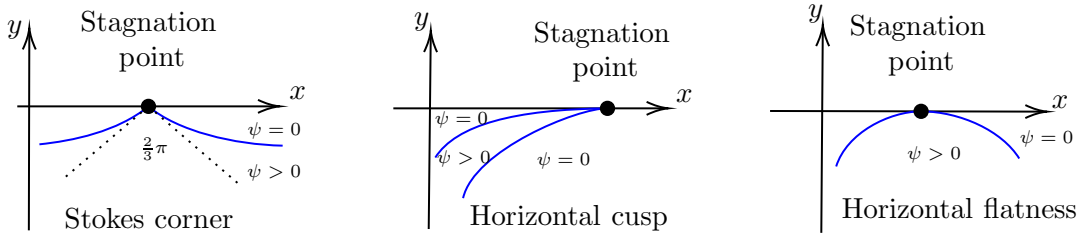
\begin{figure}[!ht]
	\centering
	\tikzset{every picture/.style={line width=0.75pt}} 
	
	\begin{tikzpicture}[x=0.75pt,y=0.75pt,yscale=-1,xscale=1]
		
		\draw    (20.5,189.25) -- (20.74,89) ;
		\draw [shift={(20.74,87)}, rotate = 90.14] [color={rgb, 255:red, 0; green, 0; blue, 0 }  ][line width=0.75]    (10.93,-3.29) .. controls (6.95,-1.4) and (3.31,-0.3) .. (0,0) .. controls (3.31,0.3) and (6.95,1.4) .. (10.93,3.29)   ;
		\draw    (14.67,126.67) -- (145,126.67) ;
		\draw [shift={(147,126.67)}, rotate = 180] [color={rgb, 255:red, 0; green, 0; blue, 0 }  ][line width=0.75]    (10.93,-3.29) .. controls (6.95,-1.4) and (3.31,-0.3) .. (0,0) .. controls (3.31,0.3) and (6.95,1.4) .. (10.93,3.29)   ;
		\draw    (222.17,188.58) -- (223.2,90.33) ;
		\draw [shift={(223.22,88.33)}, rotate = 90.6] [color={rgb, 255:red, 0; green, 0; blue, 0 }  ][line width=0.75]    (10.93,-3.29) .. controls (6.95,-1.4) and (3.31,-0.3) .. (0,0) .. controls (3.31,0.3) and (6.95,1.4) .. (10.93,3.29)   ;
		\draw    (204.33,127.67) -- (341,127.67) ;
		\draw [shift={(343,127.67)}, rotate = 180] [color={rgb, 255:red, 0; green, 0; blue, 0 }  ][line width=0.75]    (10.93,-3.29) .. controls (6.95,-1.4) and (3.31,-0.3) .. (0,0) .. controls (3.31,0.3) and (6.95,1.4) .. (10.93,3.29)   ;
		\draw [color={rgb, 255:red, 0; green, 0; blue, 255 }  ,draw opacity=1 ]   (239.5,173) .. controls (245,148.8) and (297.17,127.5) .. (312.33,127.67) ;
		\draw [color={rgb, 255:red, 0; green, 0; blue, 255 }  ,draw opacity=1 ]   (225.5,152.5) .. controls (236,136) and (265.5,128.17) .. (312.33,127.67) ;
		\draw    (420.17,190.5) -- (420.92,87.33) ;
		\draw [shift={(420.94,85.33)}, rotate = 90.42] [color={rgb, 255:red, 0; green, 0; blue, 0 }  ][line width=0.75]    (10.93,-3.29) .. controls (6.95,-1.4) and (3.31,-0.3) .. (0,0) .. controls (3.31,0.3) and (6.95,1.4) .. (10.93,3.29)   ;
		\draw    (394.33,129) -- (532,129.33) ;
		\draw [shift={(534,129.33)}, rotate = 180.14] [color={rgb, 255:red, 0; green, 0; blue, 0 }  ][line width=0.75]    (10.93,-3.29) .. controls (6.95,-1.4) and (3.31,-0.3) .. (0,0) .. controls (3.31,0.3) and (6.95,1.4) .. (10.93,3.29)   ;
		\draw [color={rgb, 255:red, 0; green, 0; blue, 255 }  ,draw opacity=1 ]   (25.83,145.83) .. controls (42.58,144.08) and (65.17,139.5) .. (80.83,126.67) ;
		\draw [color={rgb, 255:red, 0; green, 0; blue, 255 }  ,draw opacity=1 ]   (80.83,126.67) .. controls (95.5,139.17) and (124,146) .. (148.5,146.5) ;
		\draw  [dash pattern={on 0.84pt off 2.51pt}]  (80.83,126.67) -- (127.5,166.83) ;
		\draw  [dash pattern={on 0.84pt off 2.51pt}]  (80.83,126.67) -- (37.17,163.83) ;
		\draw [color={rgb, 255:red, 0; green, 0; blue, 255 }  ,draw opacity=1 ]   (427.67,157) .. controls (433,140) and (454.33,129.33) .. (470.5,129.17) ;
		\draw [color={rgb, 255:red, 0; green, 0; blue, 255 }  ,draw opacity=1 ]   (470.5,129.17) .. controls (490.33,129.33) and (511.33,139) .. (520,153) ;
		\draw    (80.83,126.67) ;
		\draw [shift={(80.83,126.67)}, rotate = 0] [color={rgb, 255:red, 0; green, 0; blue, 0 }  ][fill={rgb, 255:red, 0; green, 0; blue, 0 }  ][line width=0.75]      (0, 0) circle [x radius= 3.35, y radius= 3.35]   ;
		\draw    (312.33,127.67) ;
		\draw [shift={(312.33,127.67)}, rotate = 0] [color={rgb, 255:red, 0; green, 0; blue, 0 }  ][fill={rgb, 255:red, 0; green, 0; blue, 0 }  ][line width=0.75]      (0, 0) circle [x radius= 3.35, y radius= 3.35]   ;
		\draw    (470.5,129.17) ;
		\draw [shift={(470.5,129.17)}, rotate = 0] [color={rgb, 255:red, 0; green, 0; blue, 0 }  ][fill={rgb, 255:red, 0; green, 0; blue, 0 }  ][line width=0.75]      (0, 0) circle [x radius= 3.35, y radius= 3.35]   ;
		
		\draw (75,141.07) node [anchor=north west][inner sep=0.75pt]  [font=\tiny]  {$\frac{2}{3} \pi $};
		\draw (129.17,154.4) node [anchor=north west][inner sep=0.75pt]  [font=\tiny]  {$\psi  >0$};
		\draw (129.23,133.13) node [anchor=north west][inner sep=0.75pt]  [font=\tiny]  {$\psi =0$};
		\draw (37,176.67) node [anchor=north west][inner sep=0.75pt]   [align=left] {Stokes corner};
		\draw (150.5,115.4) node [anchor=north west][inner sep=0.75pt]    {$x$};
		\draw (8,81.9) node [anchor=north west][inner sep=0.75pt]    {$y$};
		\draw (535,119.9) node [anchor=north west][inner sep=0.75pt]    {$x$};
		\draw (412,73.9) node [anchor=north west][inner sep=0.75pt]    {$y$};
		\draw (429,171.5) node [anchor=north west][inner sep=0.75pt]   [align=left] {Horizontal flatness};
		\draw (462,145.4) node [anchor=north west][inner sep=0.75pt]  [font=\tiny]  {$\psi  >0$};
		\draw (518.77,137.07) node [anchor=north west][inner sep=0.75pt]  [font=\tiny]  {$\psi =0$};
		\draw (241.33,175) node [anchor=north west][inner sep=0.75pt]   [align=left] {Horizontal cusp};
		\draw (345.5,116.9) node [anchor=north west][inner sep=0.75pt]    {$x$};
		\draw (213.33,77.9) node [anchor=north west][inner sep=0.75pt]    {$y$};
		\draw (275.17,148.07) node [anchor=north west][inner sep=0.75pt]  [font=\tiny]  {$\psi =0$};
		\draw (223.67,127.4) node [anchor=north west][inner sep=0.75pt]  [font=\tiny]  {$\psi =0$};
		\draw (224.7,147.23) node [anchor=north west][inner sep=0.75pt]  [font=\tiny]  {$\psi  >0$};
		\draw (45,77.33) node [anchor=north west][inner sep=0.75pt]   [align=left] {Stagnation \\ \ \ \ \ point};
		\draw (273.67,82.67) node [anchor=north west][inner sep=0.75pt]   [align=left] {Stagnation \\ \ \ \ \ point};
		\draw (433,79.33) node [anchor=north west][inner sep=0.75pt]   [align=left] {Stagnation \\ \ \ \ \ point};
	\end{tikzpicture}
	\caption{Asymptotics for Type I point.}
	\label{Fig: asymptotics for typeI point}
\end{figure}

Furthermore, at any point in axis $S_{\psi}^{a}$, the singular profiles of the free surface must be a “cusp”. At the origin $O$, they concluded that the possibilities are the “Garabedian pointed bubble”, a “horizontal cusp”, or a “horizontal flatness” (we refer readers to \cite[Table 1]{DHP2022} for these singular asymptotics). These findings suggest that the asymptotic profiles in proximity to various degenerate points for axisymmetric inviscid gravity flows with vorticity are akin to those observed in the irrotational case \cite{VW2014}. Ultimately, it is not beyond our expectations that the free boundary near all non-degenerate points can be expressed as a graph of a  $C^{1,\alpha}$ $(0<\alpha<1)$ smooth function, and we would like to refer readers to our recent work \cite{DY2023} on this subject.

In this paper, we focus on \emph{horizontally flat singularities} near stagnation points (The third graph in Figure \ref{Fig: asymptotics for typeI point}). Originally, they were observed and excluded during the investigation of Stokes conjecture for water waves with zero vorticity \cite[Theorem B]{VW2011}, and also for water waves with non-zero vorticity \cite[Theorem 4.6]{VW2012}. Furthermore, horizontally flat singularities do not exist at stagnation points of axisymmetric gravity irrotational flows \cite[Theorem 3.8]{VW2014}. Physical intuitions that relate to this kind of singularities can be interpreted as there being an infinite number of connected components of the air region. Mathematically, these singularities correspond to a specific density, which we shall discuss in the subsequent section (cf. Proposition \ref{Proposition: density estimates}), at the stagnation points. This density, which arises as the limits of rescaled variational solutions of the Weiss-boundary adjusted energy, plays an important role in classifying asymptotics at the stagnation points. One of the primary contributions of the work  \cite[Theorem 1.1]{DHP2022} was to compute all the possible blow-up limits and densities at the stagnation points when the vorticity is considered, and the objective of this article is to prove that the set of stagnation point that corresponds to the density of horizontal flat singularities is an empty set.

The appearance of the horizontally flat singularities and the cusp singularities is caused by the lack of compactness for variational solutions of the problem \eqref{Formula: model problem}. It should be worth noting that the cusp singularities near the stagnation points were proved to be not possible when assuming the strong Bernstein estimates for water waves without vorticity \cite[Lemma 4.4]{VW2011}, and were conjectured to be impossible for water waves with vorticity with the additional Rayleigh-Taylor condition \cite[Remark 4.7]{VW2012}. As for the horizontally flat singularities, V\v{a}rv\v{a}ruc\v{a} and Weiss considered for the first time the so-called \emph{frequency formula} \cite[Section 7]{VW2011}, and successfully applied this formula to exclude them. The frequency of the harmonic functions was initially observed by Almgren in \cite{A2000} for $Q$-valued harmonic functions and later developed by Garofalo and Lin to more general elliptic equations in \cite{GL1986,GL1987}. The aim of this paper is to develop ingenious tools introduced in \cite{VW2012,VW2014}: a frequency formula (presented in Theorem \ref{Theorem: freq}) in the context of axisymmetric rotational waves. The new frequency formula helps us to investigate the blow-up limits of the frequency function (presented in Proposition \ref{Proposition: vm}). The situation is complicated here by the fact that the axisymmetric problem is \emph{not} as the same as the true two-dimensional problem, and it is unavoidable to deal with the integrability of those additional terms caused by the axis of symmetry. A major part of this work, which is originally inspired by \cite[Remark 3.11]{VW2014}, is to prove that these terms are indeed behave like small \emph{perturbations} of the two-dimensional problem (Presented in Proposition \ref{Lemma: V(r)}).

Our method in the present work can also be used to study the degenerate points at the origin (cf. Proposition 5.3 in \cite{DHP2022}). We can find a non-trivial solution by following the same steps as in Section \ref{Section: frequency formula}-Section \ref{Section: conclusions}. This solution is a homogeneous function of degree $3$ and has the form 
\begin{align*}
	u_{0}(x,y)=\frac{x_{1}^{2}x_{2}^{+}}{\sqrt{\int_{\partial B_{1}^{+}}x_{1}^{3}(x_{2}^{+})^{2}d\mathcal{H}^{1}}}.
\end{align*}
Notice that it was proved in \cite[Proposition 5.3]{DHP2022} that the only non-trivial asymptotics suggested by the invariant scaling of the equation at the origin is the Garabedian pointed bubble with water above air. Thus, our approach gives the possibility (existence) of a solution that has a higher growth than that asked by the invariant scaling.

The flow of the paper is the following: after clarifying some notations and some preliminary results in Section \ref{Section: notation}, we follow in Section \ref{Section: frequency formula} the approach in \cite{VW2012} in order to compute the frequency formula in our settings. Then, we apply in Section \ref{Section: blow-up limits} the ideas of \cite{VW2012,VW2014} to study blow-up limits of the frequency sequence. Finally, we prove that the set of horizontally flat singularities is indeed empty and we prove this result in Section \ref{Section: conclusions}.
\section{Notations and technical tools}\label{Section: notation}
For the abuse of confusion, we adhere some notations utilized in the previous works. In what follows, we denote by $\chi_{A}$ the characteristic function of a set $A$, by $X=(x,y)$ a point in  $\mathbb{R}^{2}$, by $B_{r}(X_{0}):=\{X\in\mathbb{R}^{2}:|X-X_{0}|<r\}$ the ball of center $X_{0}$ and radius $r$ and by $B_{r}$ the ball $B_{r}(0)$. Also, $\mathcal{H}^{s}$ shall denote the $s$-dimensional Hausdorff measure and $\nu$ will always refer to the outer normal on a given surface. We will say $O(1)$ and $O(r)$, etc., if  $|O(1)|\leqslant C$, $|O(r)|\leqslant Cr$ for some positive constant $C$.

Moreover, $dX:=dxdy$ shall stand for the volume element and we further introduce weighted Lebesgue $L_{\mathrm{w}}^{2}(E)$ and Sobolev space $W_{\mathrm{w}}^{1,2}(E)$, which are defined for any open subset $E$ of $\mathbb{R}_{+}^{2}$ by 
\begin{align*}
	L_{\mathrm{w}}^{2}(E):=\left\lbrace\psi\colon E\to\mathbb{R}\text{ is measurable and }\int_{E}\frac{1}{x}|\psi|^{2}\:dX<+\infty\right\rbrace,
\end{align*}
and
\begin{align*}
	W_{\mathrm{w}}^{1,2}(E):=\left\lbrace\psi\in L_{\mathrm{w}}^{2}(E):\pd{\psi}{x}\in L_{\mathrm{w}}^{2}(E)\text{ and }\pd{\psi}{y}\in L_{\mathrm{w}}^{2}(E)\right\rbrace,
\end{align*}
respectively. For the sake of convenience we are going to reflect the problem at the line $\{y=0\}$, namely, we study the solutions of the problem 
\begin{equation}\label{Formula: model problem1}
	\begin{cases}
		\displaystyle\operatorname{div}\br{\frac{1}{x}\nabla\psi} = -xf(\psi) & \text{ in } \Omega\cap \{\psi > 0\}, \\
		\displaystyle\frac{1}{x^{2}}|\nabla\psi|^{2} = y & \text{ on } \Omega\cap\partial\{\psi > 0\}.
	\end{cases}
\end{equation}
The nonlinearity $f$ in the first equation of \eqref{Formula: model problem1} is assumed at present a continuous function with primitive $F(z):=\int_{0}^{z}f(s)ds$. We define the set of stagnation points $S_{\psi}^{s}$ by 
\begin{align*}
	S_{\psi}^{s}:=\{X_{0}=(x_{0},0)\in\Omega\cap\partial\{\psi>0\}\colon x_{0}>0\}. 
\end{align*}
Since our problem is completely local, we do not impose any boundary condition on $\partial\Omega$. Finally, for any $X_{0}\in S_{\psi}^{s}$, we define the number
\begin{align}\label{Formula: d}
	\delta:=\frac{\min\{x_{0},\operatorname{dist}(X_{0},\partial\Omega)\}}{2}.
\end{align}
We now introduce our notion of a \emph{variational solution} of the problem \eqref{Formula: model problem1}.
\begin{definition}[Variational solutions]
	The function $\psi\in W_{\mathrm{w},\mathrm{loc}}^{1,2}(\Omega)$ such that $\psi\geqslant0$ in $\Omega$, $\psi=0$ in $\Omega\cap\{x=0\}$ is called a \emph{variational solution} of the problem \eqref{Formula: model problem1}, provided that $\psi\in C^{0}(\Omega)\cap C^{2}(\Omega\cap\{ \psi>0 \})$, $\psi\equiv0$ in $\{y\leqslant0\}$,
	\begin{align*}
		\lim_{\substack{X\to X_{0},\\ X\in\Omega\cap\{\psi>0\}}}\frac{1}{x}\pd{\psi}{y}=0,\qquad\lim_{\substack{X\to X_{0},\\ X\in\Omega\cap\{\psi>0\}}}\frac{1}{x}\pd{\psi}{x}\text{ exists, }
	\end{align*}
	for any $X_{0}\in\Omega\cap\{x=0\}$, and
	\begin{align}\label{Formula: Variational solution}
		\begin{split}
			&\int_{\Omega}\left(\frac{1}{x}|\nabla\psi|^{2}-2xF(\psi)+xy\chi_{\{\psi>0\}}\right)\operatorname{div}\phi-\frac{2}{x}\nabla\psi D\phi\nabla\psi dX\\
			&\qquad\qquad+\int_{\Omega}\left(-\frac{1}{x^{2}}|\nabla\psi|^{2}-2F(\psi)+y\chi_{\{\psi>0\}}\right)\phi_{1}dX+\int_{\Omega}x\chi_{\{\psi>0\}}\phi_{2}dX=0,
		\end{split}
	\end{align}
	for any $\phi=(\phi_{1},\phi_{2})\in C_{0}^{\infty}(\Omega;\mathbb{R}^{2})$ such that $\phi_{1}=0$ on $\{x=0\}$.
\end{definition}
\begin{remark}
	The equation \eqref{Formula: Variational solution} is nothing but the Euler-Lagrange equation of the energy 
	\begin{align*}
		E(\phi):=\int_{\Omega}\left(\frac{1}{x}|\nabla\phi|^{2}-2xF(\phi)+xy\chi_{\{\phi>0\}}\right)dX
	\end{align*}
	with respect to domain variation. Assume that the free boundary $\varGamma:=\Omega\cap\partial\{\psi>0\}$ and $\psi$ are $C^{2}$ smooth, then an integration by parts both in the water phase and on the free surface of the formula \eqref{Formula: Variational solution} shows that $\psi$ solves the equation \eqref{Formula: model problem1} in the classical sense.
\end{remark}
\begin{remark}\label{Remark: Energy identity}
	In this remark, we introduce an energy identity (cf. \cite[(3.3)]{DHP2022}) which is helpful for our future discussion. For any $X_{0}\in S_{\psi}^{s}$ and any $r\in(0,\delta)$, the following identity holds.
	\begin{align}\label{Formula: Energy identity}
		\int_{B_{r}(X_{0})}\left(\frac{1}{x}|\nabla\psi|^{2}-x\psi f(\psi)\right)dX=\int_{\partial B_{r}(X_{0})}\frac{1}{x}\psi\nabla\psi\cdot\nu d\mathcal{H}^{1}.
	\end{align}
\end{remark}
The technical tools at our disposition comprise a monotonicity formula and a density estimate introduced in \cite{DHP2022}. For the sake of completeness, let us state them respectively.
\begin{proposition}[Weiss's monotonicity formula, {\cite[Lemma 3.1]{DHP2022}}]
	Let $\psi$ be a variational solution of the problem \eqref{Formula: model problem1} and let $X_{0}\in S_{\psi}^{s}$. For any $r\in(0,\delta)$, define
	\begin{subequations}
		\begin{align}
			&I_{X_{0},\psi}(r)=I(r)=r^{-3}\int_{B_{r}(X_{0})}\left(\frac{1}{x}|\nabla\psi|^{2}-x\psi f(\psi)+xy\chi_{\{\psi>0\}}\right)dX, \label{Formula: I(r)}\\
			&J_{X_{0},\psi}(r)=J(r)=r^{-4}\int_{\partial B_{r}(X_{0})}\frac{1}{x}\psi^{2}d\mathcal{H}^{1}, \label{Formula: J(r)}\\
			&K_{X_{0},\psi}(r)=K(r)=r\int_{\partial B_{r}(X_{0})}(2xF(\psi)-x\psi f(\psi))d\mathcal{H}^{1}+\int_{B_{r}(X_{0})}(2x_{0}F(\psi)-6xF(\psi))dX,\label{Formula: K(r)}\\
			&I_{1,X_{0},\psi}(r)=I_{1}(r)=\int_{B_{r}(X_{0})}-\frac{x-x_{0}}{x^{2}}|\nabla\psi|^{2}dX,\label{Formula: I1(r)}\\
			&I_{2,X_{0},\psi}(r)=I_{2}(r)=\int_{B_{r}(X_{0})}(x-x_{0})y\chi_{\{\psi>0\}}dX,\label{Formula: I2(r)},
		\end{align}
		and
		\begin{flalign}\label{Formula: J1(r)}
			\qquad&J_{1,X_{0},\psi}(r)=J_{1}(r)=\int_{\partial B_{r}(X_{0})}\frac{x-x_{0}}{x^{2}}\psi^{2}d\mathcal{H}^{1}.&
		\end{flalign}
		Then the Weiss-boundary adjusted energy
		\begin{align}\label{Formula: Weiss-boundary adjusted energy}
			\Phi_{X_{0},\psi}(r)=\Phi(r)=I(r)-\frac{3}{2}J(r)
		\end{align}
		satisfies for a.e. $r\in(0,\delta)$,
		\begin{align}\label{Formula: derivatives of Weiss-boundary adjusted energy}
			\Phi'(r)=2r^{-3}\int\limits_{\partial B_{r}(X_{0})}\frac{1}{x}\left(\nabla\psi\cdot\nu-\frac{3}{2}\frac{\psi}{r}\right)^{2}d\mathcal{H}^{1}+r^{-4}K(r)+r^{-4}\sum_{i=1}^{2}I_{i}(r)+\frac{3}{2}r^{-5}J_{1}(r).
		\end{align}
	\end{subequations}
\end{proposition}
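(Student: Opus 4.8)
The plan is to differentiate $I(r)$ and $J(r)$ separately, express each derivative through the three boundary quantities $w(r)=\int_{\PBrx{X_0}}\fc1x(\grad\y\cdot\n)^2\dhone$, $v(r)=\int_{\PBrx{X_0}}\fc1x\y\,\grad\y\cdot\n\,\dhone$ and $u(r)=\int_{\PBrx{X_0}}\fc1x\y^2\,\dhone$ (so that $J(r)=r^{-4}u(r)$), together with the correction integrals $I_1,I_2,J_1,K$, and then observe that the part of $\F'(r)=I'(r)-\tfrac32J'(r)$ not involving the corrections assembles into a perfect square. A structural remark used throughout: since $r<\d\ll x_0/2$, the ball $\Brx{X_0}$ stays a fixed distance away from the axis $\{x=0\}$, so $\fc1x$ and all $x$-dependent coefficients are smooth and bounded above and below on $\Brx{X_0}$; in particular every integral below is finite, the radial cutoffs introduced later satisfy the constraint $\f_1=0$ on $\{x=0\}$ automatically, and $r\mapsto u(r)$, $r\mapsto\int_{\Brx{X_0}}(\,\cdot\,)\,dX$ are absolutely continuous by the coarea formula.

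For the $J$-term I would change variables $X=X_0+r\w$, $\w\in\PBo$, so that $u(r)=r\int_{\PBo}\tfrac{\y^2(X_0+r\w)}{x_0+r\w_1}\,\dhone(\w)$, differentiate in $r$, and change back; the derivative that hits the \emph{non-homogeneous} weight $\fc1x=\tfrac1{x_0+r\w_1}$ is precisely what creates the term $-\tfrac1rJ_1(r)$, yielding $u'(r)=\tfrac1ru(r)+2v(r)-\tfrac1rJ_1(r)$, hence $-\tfrac32J'(r)=\tfrac92r^{-5}u(r)-3r^{-4}v(r)+\tfrac32r^{-5}J_1(r)$.

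For the $I$-term, set $A(r):=\int_{\Brx{X_0}}\left(\fc1x|\grad\y|^2-x\y f(\y)+xy\Kh{\y>0}\right)dX$, so that $I(r)=r^{-3}A(r)$, $I'(r)=-3r^{-4}A(r)+r^{-3}A'(r)$, and $A'(r)=\int_{\PBrx{X_0}}\left(\fc1x|\grad\y|^2-x\y f(\y)+xy\Kh{\y>0}\right)\dhone$ for a.e. $r$. The energy identity \eqref{Formula: Energy identity} rewrites $A(r)$ as $v(r)+\int_{\Brx{X_0}}xy\Kh{\y>0}\,dX$. To rewrite $A'(r)$ I would feed the domain-variation identity \eqref{Formula: Variational solution} the radial field $\f(X)=\i_\ve(|X-X_0|)(X-X_0)$, with $\i_\ve$ a smooth cutoff increasing to $\chi_{[0,r)}$; since $\dvg\f\to2\chi_{\Brx{X_0}}-r\,\dhone\mres\PBrx{X_0}$ and $\grad\y\,D\f\,\grad\y\to\chi_{\Brx{X_0}}|\grad\y|^2-r(\grad\y\cdot\n)^2\,\dhone\mres\PBrx{X_0}$, the two top-order bulk terms cancel and one gets a Pohozaev-type relation whose bulk part is $I_1(r)+I_2(r)+3\int_{\Brx{X_0}}xy\Kh{\y>0}\,dX+\int_{\Brx{X_0}}\left(2x_0F(\y)-6xF(\y)\right)dX$ and whose only boundary term is $2r\,w(r)$. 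Combining this relation with the formula for $A'(r)$ and with the algebraic identity $r\int_{\PBrx{X_0}}\left(2xF(\y)-x\y f(\y)\right)\dhone+\int_{\Brx{X_0}}\left(2x_0F(\y)-6xF(\y)\right)dX=K(r)$ gives $A'(r)=2w(r)+\tfrac1rK(r)+\tfrac1r\!\left(3\int_{\Brx{X_0}}xy\Kh{\y>0}\,dX+I_1(r)+I_2(r)\right)$. Plugging $A(r)$ and $A'(r)$ into $I'(r)=-3r^{-4}A(r)+r^{-3}A'(r)$ makes the two copies of $\int_{\Brx{X_0}}xy\Kh{\y>0}\,dX$ cancel, leaving $I'(r)=2r^{-3}w(r)-3r^{-4}v(r)+r^{-4}K(r)+r^{-4}(I_1(r)+I_2(r))$. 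Adding $-\tfrac32J'(r)$ and using that $w(r)-\tfrac3rv(r)+\tfrac9{4r^2}u(r)=\int_{\PBrx{X_0}}\fc1x\left(\grad\y\cdot\n-\tfrac32\tfrac\y r\right)^2\dhone$ then yields \eqref{Formula: derivatives of Weiss-boundary adjusted energy}.

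I expect the real obstacle to be not this bookkeeping but the rigorous passage to the limit $\i_\ve\uparrow\chi_{[0,r)}$ inside \eqref{Formula: Variational solution}: one must know that for a.e. $r\in(0,\d)$ the traces of $\y$ and $\grad\y$ on $\PBrx{X_0}$ are controlled (a Fubini/coarea argument in the radius against $\int_{\Brx{X_0}}\fc1x|\grad\y|^2\,dX<\8$) and that the measure limits of $\dvg\f$ and $D\f$ carry no mass on the free boundary $\W\cap\p\{\y>0\}$, where a priori $\y$ is only $C^0$; as in \cite{VW2011,VW2012} this is handled by combining the interior $C^2$ regularity of $\y$, the continuity of $\grad\y$ up to the relevant part of $\p\{\y>0\}$ dictated by \eqref{Formula: model problem1}, and a suitable approximation of $\Kh{\y>0}$. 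Once this is in place, differentiation under the integral sign and the elementary manipulation turning $2r^{-3}w(r)-6r^{-4}v(r)+\tfrac92r^{-5}u(r)$ into a square are routine.
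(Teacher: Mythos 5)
Your reconstruction is correct and follows the same route the paper implicitly takes (by citing the computation from [DHP2022]): differentiate $J(r)=r^{-4}u(r)$ by rescaling to $\partial B_1$ so the non-homogeneity of the weight $1/x$ produces $J_1$, and obtain $A'(r)$ from the domain-variation identity with a radial cutoff so that the $\frac1x|\nabla\psi|^2$ bulk terms cancel, leaving the boundary term $2rw(r)$ plus the lower-order corrections that assemble into $K,I_1,I_2$; combined with the energy identity for $A(r)$ this reproduces exactly the paper's displayed formulas for $I'(r)$ and $J'(r)$, after which the completion to a square is routine. (One small wording slip: the Pohozaev relation has the boundary term $-r\int_{\partial B_r}\bigl(\tfrac1x|\nabla\psi|^2-2xF(\psi)+xy\chi_{\{\psi>0\}}\bigr)d\mathcal H^1+2rw(r)$, not $2rw(r)$ alone; your subsequent expression for $A'(r)$ shows you accounted for this correctly, so it does not affect the argument.)
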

\begin{proof}
	After choosing particular test function into \eqref{Formula: Variational solution} and a straightforward calculation (cf. \cite[proof of Lemma 3.1]{DHP2022}), we obtain
	\begin{align}\label{Formula: I'(r)}
		I'(r)=r^{-4}\left(2r\int_{\partial B_{r}(X_{0})}\frac{1}{x}(\nabla\psi\cdot\nu)^{2}d\mathcal{H}^{1}-3\int_{\partial B_{r}(X_{0})}\frac{1}{x}\psi\nabla\psi\cdot\nu d\mathcal{H}^{1}+K(r)\right)+r^{-4}\sum_{i=1}^{2}I_{i}(r),
	\end{align}
	and
	\begin{align}\label{Formula: J'(r)}
		J'(r)=r^{-5}\left(2r\int_{\partial B_{r}(X_{0})}\frac{1}{x}\psi\nabla\psi\cdot\nu d\mathcal{H}^{1}-3\int_{\partial B_{r}(X_{0})}\frac{1}{x}\psi^{2}d\mathcal{H}^{1}-J_{1}(r)\right).
	\end{align}
\end{proof}
We now recall the definition of weak solutions of the problem \eqref{Formula: model problem1}.
\begin{definition}[Weak solutions]
	The function $\psi\in W_{\mathrm{w}}^{1,2}(\Omega)$ is called a \emph{weak solution} of the problem \eqref{Formula: model problem1}, provided that $\psi$ is a variational solution of the problem \eqref{Formula: model problem1} and the topological free boundary $\partial\{\psi>0\}\cap\Omega\cap\{x>0\}\cap\{y\neq0\}$ is locally a $C^{2,\alpha}$ curve.
\end{definition}
\begin{remark}
	The smoothness of the topological free boundary of the problem \eqref{Formula: model problem1} was recently proved in \cite[Theorem 1.1]{DY2023}.
\end{remark}
For weak solutions of the problem \eqref{Formula: model problem1}, the first author and his collaborators prove the following density estimates with respect to the stagnation points (cf. \cite[Proposition 3.3]{DHP2022}).
\begin{proposition}[Density estimates, {\cite[Proposition 3.3]{DHP2022}}]\label{Proposition: density estimates}
	Let $\psi$ be a weak solution with the following growth assumption
	\begin{align}\label{Formula: growth assumption}
		\frac{|\nabla\psi|^{2}}{x^{2}}\leqslant C(|y|+|x-x_{0}|)\quad\text{ in }\quad B_{r}(X_{0}),\quad\text{ for any }\quad X_{0}\in\Omega\cap\partial\{\psi>0\},
	\end{align}
	and let $X_{0}\in S_{\psi}^{s}$. Then the following statements hold:
	\begin{enumerate}
		\item $\lim_{r\to0+}\Phi(r)$ exists and
		\begin{align*}
			\Phi(0+)\in\left\lbrace 0,x_{0}\int\limits_{B_{1}}y^{+}\chi_{ \{x\colon\pi/6<\theta<5\pi/6\} }dX,x_{0}\int\limits_{B_{1}}y^{+}dX\right\rbrace.
		\end{align*}
		\item In the case $\Phi(0+)=x_{0}\int_{B_{1}}y^{+}\chi_{ \{x\colon\pi/6<\theta<5\pi/6\} }dX$, we have
		\begin{align*}
			\frac{\psi((x_{0},0)+r(x,y))}{r^{3/2}}\to\frac{\sqrt{2}x_{0}}{3}\rho^{3/2}\cos\left(\frac{3}{2}\left(\min\left(\max\left(\theta,\frac{\pi}{6}\right),\frac{5\pi}{6}\right)-\frac{\pi}{2}\right)\right)\quad\text{ as }\quad r\to0+,
		\end{align*}
		strongly in $W_{\mathrm{loc}}^{1,2}(\mathbb{R}^{2})$ and locally uniformly in $\mathbb{R}^{2}$, where $X=(\rho\cos\theta,\rho\sin\theta)$.
		\item In the case $\Phi(0+)\in\{0,x_{0}\int_{B_{1}}y^{+}dX\}$, we have
		\begin{align*}
			\frac{\psi((x_{0},0)+r(x,y))}{r^{3/2}}\to0\quad\text{ as }\quad r\to0+,
		\end{align*}
		strongly in $W_{\mathrm{loc}}^{1,2}(\mathbb{R}^{2})$ and locally uniformly in $\mathbb{R}^{2}$.
	\end{enumerate}
\end{proposition}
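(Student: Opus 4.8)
My plan is to prove the three statements via the Weiss-type monotonicity formula \eqref{Formula: derivatives of Weiss-boundary adjusted energy}, a blow-up analysis at $X_{0}$, and a classification of the homogeneous blow-up profiles, following the scheme of \cite{VW2011,DHP2022}. The first step is the existence of $\F(0+)$. Since $X_{0}=(x_{0},0)$, one has $|x-x_{0}|\ll r$ and $|y|\ll r$ on $\Brx{X_{0}}$, so \eqref{Formula: growth assumption} gives $x^{-2}|\grad\y|^{2}\ll Cr$ there, and integrating along segments issuing from $X_{0}$ (at which $\y$ vanishes) yields in addition $|\grad\y|=O(r^{1/2})$ and $\y=O(r^{3/2})$ on $\Brx{X_{0}}$. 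Inserting these bounds into \eqref{Formula: I1(r)}, \eqref{Formula: I2(r)}, \eqref{Formula: J1(r)}, \eqref{Formula: K(r)}, and using $x_{0}>0$ together with the local boundedness of $f$, one obtains $|I_{1}(r)|+|I_{2}(r)|\ll Cr^{4}$, $|J_{1}(r)|\ll Cr^{5}$, $|K(r)|\ll Cr^{7/2}$, so the lower-order part $r^{-4}\sum_{i}I_{i}(r)+\tfrac32 r^{-5}J_{1}(r)+r^{-4}K(r)$ of \eqref{Formula: derivatives of Weiss-boundary adjusted energy} is dominated by some $h\in L^{1}((0,\d))$. The same bounds show $I(r)$, $J(r)$, and hence $\F(r)$, are bounded on $(0,\d)$; therefore $r\mapsto\F(r)+\int_{0}^{r}h(s)\,ds$ is nondecreasing and bounded, so $\F(0+)=\lim_{r\to0+}\F(r)$ exists, and the nonnegative first term on the right of \eqref{Formula: derivatives of Weiss-boundary adjusted energy} is integrable in $r$ over $(0,\d)$.

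Next, for $r\in(0,\d)$ set $\y_{r}(X):=r^{-3/2}\y(X_{0}+rX)$; by the above estimates $\{\y_{r}\}$ is uniformly bounded and uniformly Lipschitz on compact subsets of $\R^{2}$, so along a subsequence $r_{m}\to0$ one has $\y_{r_{m}}\to\y_{0}$ locally uniformly and weakly in $W^{1,2}_{\mathrm{loc}}(\R^{2})$, and $\Kh{\y_{r_{m}}>0}\stackrel{*}{\rightharpoonup}\m$ with $0\ll\m\ll1$. Rescaling the variational identity \eqref{Formula: Variational solution} and letting $r\to0$, the weights become constant ($x^{-1}\to x_{0}^{-1}$, $x\to x_{0}$), every term involving $F(\y)$ or $f(\y)$ carries an extra positive power of $r$ and disappears, and---crucially---the whole $\phi_{1}$-coefficient integral of \eqref{Formula: Variational solution}, which is the contribution absent in the two-dimensional problem, is of lower order and also disappears; one finds $\y_{0}\gg0$, $\y_{0}\equiv0$ in $\{y\ll0\}$, and $\y_{0}$ a solution of the constant-coefficient two-dimensional problem $\Delta\y_{0}=0$ in $\{\y_{0}>0\}$ with $|\grad\y_{0}|^{2}=x_{0}^{2}\,y$ on the free boundary. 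Since $\F_{X_{0},\y}(r_{m}\r)\to\F(0+)$ for every $\r>0$, the limiting Weiss energy is independent of $\r$; combined with the square-integrability from the first step (which forces the first term of \eqref{Formula: derivatives of Weiss-boundary adjusted energy} to vanish in the limit) and the standard upgrade of weak to strong convergence in $W^{1,2}_{\mathrm{loc}}$, this forces $\y_{0}$ to be homogeneous of degree $\tfrac32$.

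Writing $\y_{0}=\r^{3/2}g(\th)$, harmonicity in $\{g>0\}$ gives $g''+\tfrac94 g=0$ there, so on each component $g=c\cos(\tfrac32(\th-\th_{0}))$ on an arc of length $\tfrac{2\pi}{3}$; imposing the free boundary condition $g'(\th)^{2}=x_{0}^{2}\sin\th$ at both endpoints of each such arc forces $\th_{0}=\tfrac{\pi}{2}$ and $c=\tfrac{\sqrt{2}}{3}x_{0}$ on every component, so (arguing as in \cite{VW2011}) $\{g>0\}$ has at most one component, and hence either $\y_{0}\equiv0$ or $\y_{0}$ equals the Stokes profile $\tfrac{\sqrt{2}x_{0}}{3}\r^{3/2}\cos(\tfrac32(\min(\max(\th,\pi/6),5\pi/6)-\tfrac{\pi}{2}))$. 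If $\y_{0}$ is the Stokes profile, its free boundary is non-degenerate away from the origin, so $\Kh{\y_{r_{m}}>0}\to\Kh{\y_{0}>0}$ and the energies converge, and the energy identity of Remark \ref{Remark: Energy identity} in the limit gives $x_{0}^{-1}\int_{\Br}|\grad\y_{0}|^{2}=\tfrac{3}{2x_{0}}\int_{\PBo}\y_{0}^{2}\dhone$, whence $\F(0+)=\F_{X_{0},\y_{0}}(1)=x_{0}\int_{\Br}y^{+}\Kh{\pi/6<\th<5\pi/6}$. If instead $\y_{0}\equiv0$, then $\y_{r_{m}}\to0$ strongly in $W^{1,2}_{\mathrm{loc}}$ and $\F(0+)=x_{0}\int_{\Br}y^{+}\m$; the regularity of the topological free boundary of a weak solution gives a bound of the form $\mathcal H^{1}(\p\{\y>0\}\cap\Brx{X_{0}})\ll Cr$, so $\Kh{\y_{r_{m}}>0}$ is of uniformly locally bounded variation and, after passing to a further subsequence, converges strongly in $L^{1}_{\mathrm{loc}}$ to $\chi_{D}$ for some set $D$; inserting $\y_{0}\equiv0$ into the limiting variational identity then forces $\chi_{D}=\m$ to vanish in $\{y\ll0\}$ and to be locally constant in $\{y>0\}$, so $D\in\{\0,\{y>0\}\}$ and $\F(0+)\in\{0,x_{0}\int_{\Br}y^{+}\}$. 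This proves (1); since the three admissible values are pairwise distinct, the value of $\F(0+)$ selects which of the two profiles the blow-up is, and since the gravity term fixes the orientation of the Stokes profile the blow-up is unique, so the convergence holds as $r\to0+$, strongly in $W^{1,2}_{\mathrm{loc}}$ and locally uniformly, which gives (2) and (3).

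The new difficulty compared with the two-dimensional situation is concentrated in two places. The first is the first step above: one must check that the additional terms $I_{1}$, $I_{2}$, $J_{1}$, $K$ generated by the non-flat weight $\tfrac1x$ and by the mismatch $x\neq x_{0}$ are integrable in $r$ near $0$ and disappear under rescaling---this is exactly where $x_{0}>0$ and the quadratic growth \eqref{Formula: growth assumption} are indispensable, and the fact that these terms behave as $o(1)$ perturbations of the two-dimensional structure is the conceptual core of the argument. The second is the rigidity in the degenerate case $\y_{0}\equiv0$: upgrading the weak-$*$ limit $\m$ of $\Kh{\y>0}$ to a genuine characteristic function, so that only the extreme densities $0$ and $x_{0}\int_{\Br}y^{+}$ can occur, uses the regularity theory for weak---rather than merely variational---solutions, which is precisely why that hypothesis is present in the statement.
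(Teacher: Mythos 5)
This proposition is quoted verbatim from \cite[Proposition 3.3]{DHP2022} and is used in the present paper purely as a cited tool; the paper itself contains no proof of it, so there is no in-paper argument to compare against. Your reconstruction follows the standard Weiss blow-up scheme (perturbed monotonicity, homogeneity of blow-up limits, ODE classification of degree-$3/2$ profiles, density computation), which is indeed the approach of \cite{DHP2022} modelled on \cite{VW2011,VW2014}, and the quantitative estimates you give for the error terms ($|I_1|,|I_2|=O(r^4)$, $|J_1|=O(r^5)$, $|K|=O(r^{7/2})$, hence $r^{-4}I_i$, $r^{-5}J_1=O(1)$ and $r^{-4}K=O(r^{-1/2})$, all in $L^1(0,\delta)$) are correct.

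Two steps in your sketch are asserted rather than proved, and both need real work. First, in the Stokes-corner case, locally uniform convergence $\psi_{r_m}\to\psi_0$ yields only $\liminf\chi_{\{\psi_{r_m}>0\}}\geqslant\chi_{\{\psi_0>0\}}$; to pass to $\chi_{\{\psi_{r_m}>0\}}\to\chi_{\{\psi_0>0\}}$ in $L^1_{\mathrm{loc}}$ (and hence to the claimed value of $\Phi(0+)$) you need a non-degeneracy lemma that also bounds the positivity set from above, cf.\ the role of \cite[Lemma~4.3]{VW2011}; this is a genuine step, not a consequence of uniform convergence. Second, your ODE classification imposes the condition $g'(\theta)^2=x_0^2\sin\theta$ ``at both endpoints of each arc'' of $\{g>0\}$, which tacitly excludes arcs abutting $\{y=0\}$ (i.e.\ having $\theta=0$ or $\theta=\pi$ as an endpoint). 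Those must be ruled out separately: the limiting Bernoulli condition $|\nabla\psi_0|^2=x_0^2y$ degenerates as $y\to0^+$, forcing $g$ and $g'$ to vanish simultaneously at such an endpoint and hence $g\equiv0$ on that component; only then is the ``one arc of length $2\pi/3$ centred at $\theta=\pi/2$'' dichotomy available.
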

In order to better describe the asymptotics of different singularities, authors in \cite{DHP2022} imposed an assumption (cf. \cite[Assumption 1.2]{DHP2022}) on the free boundary of the problem \eqref{Formula: model problem1}, which we states here.
\begin{assumption}[Curve assumption]\label{Assumption: curve assumption}
	Suppose that in a small neighborhood $B_{r_{0}}(X_{0})$ of $X_{0}\in S_{\psi}^{s}$, the free boundary $\partial\{\psi>0\}\cap B_{r_{0}}(X_{0})$ is a \emph{continuous injective} curve $\gamma(t):=(\gamma_{1}(t),\gamma_{2}(t)):I\to\mathbb{R}^{2}$, where $I$ is an interval of $\mathbb{R}$ containing $0$ and $\gamma(0)=X_{0}$.
\end{assumption}
\begin{remark}
	It should be worth noted that the asymptotic profiles related to different densities can be graphed explicitly when assuming the Assumption \ref{Assumption: curve assumption}. In particular, we obtain in case (3) of Proposition \ref{Proposition: density estimates} that if $\Phi(0+)=x_{0}\int_{B_{1}}y^{+}dX$, then (cf. Figure \ref{Fig: horizontal flatness asymptotics}) $\gamma_{1}(t)\neq x_{0}$ in $(-t_{1},t_{1})\setminus\{0\}$, $\gamma_{1}-x_{0}$ changes sign at $t=0$, and
	\begin{align*}
		\lim_{t\to0}\frac{\gamma_{2}(t)}{\gamma_{1}(t)-x_{0}}=0.
	\end{align*}
	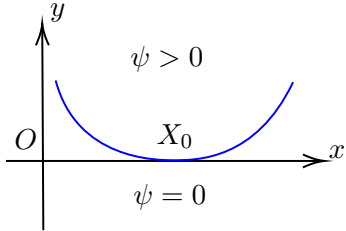
\begin{figure}[!ht]
		\tikzset{every picture/.style={line width=0.75pt}} 
		\begin{tikzpicture}[x=0.75pt,y=0.75pt,yscale=-1,xscale=1]
			
			\draw    (204.33,205) -- (203.8,102.72) ;
			\draw [shift={(203.79,100.72)}, rotate = 89.7] [color={rgb, 255:red, 0; green, 0; blue, 0 }  ][line width=0.75]    (10.93,-3.29) .. controls (6.95,-1.4) and (3.31,-0.3) .. (0,0) .. controls (3.31,0.3) and (6.95,1.4) .. (10.93,3.29)   ;
			\draw    (185.5,170.05) -- (344.19,170.05) ;
			\draw [shift={(346.19,170.05)}, rotate = 180] [color={rgb, 255:red, 0; green, 0; blue, 0 }  ][line width=0.75]    (10.93,-3.29) .. controls (6.95,-1.4) and (3.31,-0.3) .. (0,0) .. controls (3.31,0.3) and (6.95,1.4) .. (10.93,3.29)   ;
			\draw [color={rgb, 255:red, 0; green, 0; blue, 255 }  ,draw opacity=1 ]   (210.57,129.43) .. controls (221.14,169.43) and (261.21,169.68) .. (269.91,169.69) ;
			\draw [color={rgb, 255:red, 0; green, 0; blue, 255 }  ,draw opacity=1 ]   (269.91,169.69) .. controls (281.71,169.71) and (311.5,169.75) .. (330.29,130.29) ;
			
			\draw (346.75,160.63) node [anchor=north west][inner sep=0.75pt]    {$x$};
			\draw (206.24,89.18) node [anchor=north west][inner sep=0.75pt]    {$y$};
			\draw (246.86,110.68) node [anchor=north west][inner sep=0.75pt]    {$\psi  >0$};
			\draw (248.29,180.23) node [anchor=north west][inner sep=0.75pt]    {$\psi =0$};
			\draw (188.36,152.59) node [anchor=north west][inner sep=0.75pt]    {$O$};
			\draw (260.14,149.83) node [anchor=north west][inner sep=0.75pt]    {$X_{0}$};		
		\end{tikzpicture}
		\caption{Horizontal flat asymptotics.}
		\label{Fig: horizontal flatness asymptotics}
	\end{figure}
	
	If $\Phi(0+)=0$, then (cf. Figure \ref{Fig: cusp asymptotics}) $\gamma_{1}(t)\neq x_{0}$ in $(-t_{1},t_{1})\setminus\{0\}$, $\gamma_{1}-x_{0}$ does not change its sign at $t=0$, and
	\begin{align*}
		\lim_{t\to0}\frac{\gamma(t)}{\gamma_{1}(t)-x_{0}}=0.
	\end{align*}
	\begin{figure}[ht!]

		\tikzset{every picture/.style={line width=0.75pt}} 
		
		\begin{tikzpicture}[x=0.75pt,y=0.75pt,yscale=-1,xscale=1]
			
			\draw    (319.33,187.67) -- (319.43,81.94) ;
			\draw [shift={(319.43,79.94)}, rotate = 90.05] [color={rgb, 255:red, 0; green, 0; blue, 0 }  ][line width=0.75]    (10.93,-3.29) .. controls (6.95,-1.4) and (3.31,-0.3) .. (0,0) .. controls (3.31,0.3) and (6.95,1.4) .. (10.93,3.29)   ;
			\draw    (301.14,149.27) -- (459.84,149.27) ;
			\draw [shift={(461.84,149.27)}, rotate = 180] [color={rgb, 255:red, 0; green, 0; blue, 0 }  ][line width=0.75]    (10.93,-3.29) .. controls (6.95,-1.4) and (3.31,-0.3) .. (0,0) .. controls (3.31,0.3) and (6.95,1.4) .. (10.93,3.29)   ;
			\draw [color={rgb, 255:red, 0; green, 0; blue, 255 }  ,draw opacity=1 ]   (322,124.8) .. controls (330.4,135.6) and (355.2,149.2) .. (381.49,149.27) ;
			\draw [color={rgb, 255:red, 0; green, 0; blue, 255 }  ,draw opacity=1 ]   (326,117.2) .. controls (331.6,125.6) and (354.5,146.25) .. (381.49,149.27) ;
			
			\draw (461.73,138.51) node [anchor=north west][inner sep=0.75pt]    {$x$};
			\draw (321.89,68.4) node [anchor=north west][inner sep=0.75pt]    {$y$};
			\draw (367.33,108.23) node [anchor=north west][inner sep=0.75pt]    {$\psi =0$};
			\draw (369.6,158.45) node [anchor=north west][inner sep=0.75pt]    {$\psi =0$};
			\draw (304,131.82) node [anchor=north west][inner sep=0.75pt]    {$O$};
			\draw (380.29,128.22) node [anchor=north west][inner sep=0.75pt]    {$X_{0}$};

		\end{tikzpicture}
		\caption{Cusp asymptotics}
		\label{Fig: cusp asymptotics}
	\end{figure}
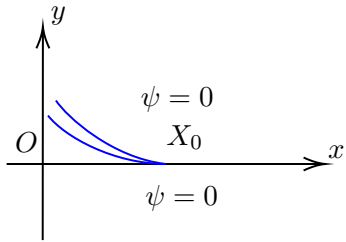
\end{remark}
In the forthcoming three sections, we will focus our analysis on those stagnation points at which the density takes the value $x_{0}\int_{B_{1}}y^{+}dX$, and the asymptotic profile is smooth (Figure \ref{Fig: horizontal flatness asymptotics}). We collect these points together and call them \emph{degenerate points}. The behavior of solutions in proximity to degenerate points will be investigated via a “frequency formula” (Section \ref{Section: frequency formula}). The analysis derived from the “frequency formula” implies that the order of solutions is indeed higher than $3/2$. Then we are able to apply a blow-up analysis to exclude the horizontal flat singularities (Section \ref{Section: blow-up limits} and Section \ref{Section: conclusions}).
\section{Degenerate points and a frequency formula}\label{Section: frequency formula}
This section aims to investigate qualitative properties of degenerate points and introduces a frequency formula. We begin with the definition of the set of degenerate points.
\begin{definition}[Degenerate points]
	Let $\psi$ be a variational solution of \eqref{Formula: model problem1}. We define the set of degenerate points to be the set of free boundary points whose density takes the value $x_{0}\int_{B_{1}}y^{+}dX$. Namely,
	\begin{align*}
		\Sigma_{\psi}:=\left\lbrace X_{0}\in S_{\psi}^{s}\colon\Phi(0+)=x_{0}\int_{B_{1}}y^{+}dX\right\rbrace.
	\end{align*}
\end{definition}
\begin{remark}\label{Remark: uppersemicontinuity}
	The set $\Sigma_{\psi}$ is closed, as a consequence of the upper semicontinuity of the monotonicity formula $X\mapsto\Phi_{\psi,X}(0+)$ in $\{y=0\}$ and the characterization of the set of values of $\Phi(0+)$. (cf. \cite[Proposition 3.3]{DHP2022}).
\end{remark}
We now introduce our strategy of formulating the frequency formula. Consider first a function $\tilde{\Phi}(r)$ defined by
\begin{align}\label{Formula: perturbation of Weiss boundary adjusted energy-1}
	\tilde{\Phi}(r):=\Phi(r)-\int_{0}^{r}t^{-4}\sum_{i=1}^{2}I_{i}(t)dt-\frac{3}{2}\int_{0}^{r}t^{-5}J_{1}(t)dt-\int_{0}^{r}t^{-4}K(t)dt,
\end{align}
where $\Phi(r)$, $I_{i}(r)$, $J_{1}(r)$ and $K(r)$ are defined in \eqref{Formula: Weiss-boundary adjusted energy}, \eqref{Formula: I1(r)}, \eqref{Formula: I2(r)}, \eqref{Formula: J1(r)} and \eqref{Formula: K(r)}, respectively. It follows from \cite[Proposition 3.3]{DHP2022} that the functions $r\mapsto r^{-4}\sum_{i=1}^{2}I_{i}(r)$, $r\mapsto r^{-5}J_{1}(r)$ and $r\mapsto r^{-4}K(r)$ are all integrable functions for all $r\in(0,\delta)$ when assuming the growth assumption \eqref{Formula: growth assumption}. We deduce that $r\mapsto\tilde{\Phi}(r)$ is differentiable for a.e. $r\in(0,\delta)$ and it follows from \eqref{Formula: derivatives of Weiss-boundary adjusted energy} that
\begin{align*}
	\tilde{\Phi}'(r)\geqslant0\quad\text{ for a.e. }r\in(0,\delta).
\end{align*}
Thus $\tilde{\Phi}(r)$ is a nondecreasing function with respect to $r$ and this implies that $\tilde{\Phi}(r)\geqslant\tilde{\Phi}(0+)$. Moreover, the integrability of functions $r\mapsto r^{-4}\sum_{i=1}^{2}I_{i}(r)$, $r\mapsto r^{-5}J_{1}(r)$ and $r\mapsto r^{-4}K(r)$ gives that $\tilde{\Phi}(0+)=\Phi(0+)=x_{0}\int_{B_{1}}y^{+}dX$. Then the inequality $\tilde{\Phi}(r)\geqslant r^{-3}\int_{B_{r}(X_{0})}x_{0}y^{+}dX$ can be rearranged into
\begin{align}\label{Formula: perturbation of Weiss boundary adjusted energy}
	\begin{split}
		&r^{-3}\int_{B_{r}(X_{0})}\left(\frac{1}{x}|\nabla\psi|^{2}-x\psi f(\psi)\right)dX-\frac{3}{2}r^{-4}\int_{\partial B_{r}(X_{0})}\frac{1}{x}\psi^{2}d\mathcal{H}^{1}\\
		&\geqslant r^{-3}\int_{B_{r}(X_{0})}x_{0}y^{+}(1-\chi_{\{\psi>0\}})dX-r^{-3}\int_{B_{r}(X_{0})}(x-x_{0})y^{+}\chi_{\{\psi>0\}}dX+\int_{0}^{r}t^{-4}I_{2}(t)dt\\
		&\quad+\int_{0}^{r}t^{-4}I_{1}(t)dt+\frac{3}{2}\int_{0}^{r}t^{-5}J_{1}(t)dt+\int_{0}^{r}t^{-4}K(t)dt.
	\end{split}
\end{align}
Introduce a new function
\begin{align}\label{Formula: perturbation of V(r)}
	e(r):=r^{4}\int_{0}^{r}t^{-4}I_{2}(t)dt-r\int_{B_{r}(X_{0})}(x-x_{0})y^{+}\chi_{\{\psi>0\}}dX+r^{4}\int_{0}^{r}t^{-4}I_{1}(t)dt+\frac{3}{2}r^{4}\int_{0}^{r}t^{-5}J_{1}(t)dt.
\end{align}
Then \eqref{Formula: perturbation of Weiss boundary adjusted energy} can be rewritten as
\begin{align*}
	&r^{-3}\int_{B_{r}(X_{0})}\left(\frac{1}{x}|\nabla\psi|^{2}-x\psi f(\psi)\right)dX-\frac{3}{2}r^{-4}\int_{\partial B_{r}(X_{0})}\frac{1}{x}\psi^{2}d\mathcal{H}^{1}\\
	&\geqslant r^{-3}\int_{B_{r}(X_{0})}x_{0}y^{+}(1-\chi_{\{\psi>0\}})dX+r^{-4}e(r)+\int_{0}^{r}t^{-4}K(t)dt.
\end{align*}
Dividing both sides by $r^{-4}\int_{\partial B_{r}(X_{0})}\frac{1}{x}\psi^{2}d\mathcal{H}^{1}$, we have
\begin{align}\label{Formula: Mean frequency}
	\begin{split}
		&\frac{r\int_{B_{r}(X_{0})}\left(\frac{1}{x}|\nabla\psi|^{2}-x\psi f(\psi)\right)dX}{\int_{\partial B_{r}(X_{0})}\frac{1}{x}\psi^{2}d\mathcal{H}^{1}}-\frac{3}{2}\geqslant\frac{r\int_{B_{r}(X_{0})}x_{0}y^{+}(1-\chi_{\{\psi>0\}})dX+e(r)}{\int_{\partial B_{r}(X_{0})}\frac{1}{x}\psi^{2}d\mathcal{H}^{1}}+\frac{r^{4}\int_{0}^{r}t^{-4}K(t)dt}{\int_{\partial B_{r}(X_{0})}\frac{1}{x}\psi^{2}d\mathcal{H}^{1}}.
	\end{split}
\end{align}
Roughly speaking, the inequality \eqref{Formula: Mean frequency} can be understood as the difference of the mean frequency and $\frac{3}{2}$. Moreover, it implies the structure of the frequency formula and we now exhibit our first main result.
\begin{theorem}[Frequency formula]\label{Theorem: freq}
	Let $\psi$ be a variational solution of the problem \eqref{Formula: model problem1}. Let $X_{0}\in S_{\psi}^{s}$ and assume that $\psi$ satisfies the growth assumption \eqref{Formula: growth assumption}. Define, for a.e. $r\in(0,\delta)$,
	\begin{align}\label{Formula: D(r)}
		D_{X_{0},\psi}(r)=D(r)=\frac{\displaystyle r\int_{B_{r}(X_{0})}\left(\frac{1}{x}|\nabla\psi|^{2}-x\psi f(\psi)\right)dX}{\displaystyle\int_{\partial B_{r}(X_{0})}\frac{1}{x}\psi^{2}d\mathcal{H}^{1}},
	\end{align}
	and
	\begin{align}\label{Formula: V(r)}
		V_{X_{0},\psi}(r)=V(r)=\frac{\displaystyle r\int_{B_{r}(X_{0})}x_{0}y^{+}(1-\chi_{\{\psi>0\}})dX+e(r)}{\displaystyle\int_{\partial B_{r}(X_{0})}\frac{1}{x}\psi^{2}d\mathcal{H}^{1}},
	\end{align}
	where $e(r)$ is defined in \eqref{Formula: perturbation of V(r)}. Moreover, set 
	\begin{align}\label{Formula: Z(r)}
		Z_{X_{0},\psi}(r)=Z(r)=\frac{\displaystyle\int_{\partial B_{r}(X_{0})}\frac{x-x_{0}}{x^{2}}\psi^{2}d\mathcal{H}^{1}}{\displaystyle\int_{\partial B_{r}(X_{0})}\frac{1}{x}\psi^{2}d\mathcal{H}^{1}}.
	\end{align}
	Then the “frequency”
	\begin{align}\label{Formula: H(r)}
		H_{X_{0},\psi}(r)=H(r)=D(r)-V(r)
	\end{align}
	satisfies for a.e. $r\in(0,\delta)$ the identities
	\begin{align}\label{Formula: H'(r)}
		\begin{split}
			H'(r)&=\frac{2}{r}\int\limits_{\partial B_{r}(X_{0})}\frac{1}{x}\left[\frac{r(\nabla\psi\cdot\nu)}{(\int_{\partial B_{r}(X_{0})}\frac{1}{x}\psi^{2}d\mathcal{H}^{1})^{1/2}}-D(r)\frac{\psi}{(\int_{\partial B_{r}(X_{0})}\frac{1}{x}\psi^{2}d\mathcal{H}^{1})^{1/2}}\right]^{2}d\mathcal{H}^{1}\\
			&\quad+\frac{2}{r}V^{2}(r)+\frac{2}{r}V(r)\left(H(r)-\frac{3}{2}\right)+\frac{1}{r}Z(r)\left(H(r)-\frac{3}{2}\right)+\frac{K(r)}{\int_{\partial B_{r}(X_{0})}\frac{1}{x}\psi^{2}d\mathcal{H}^{1}}
		\end{split}\\
		\begin{split}
			&=\frac{2}{r}\int\limits_{\partial B_{r}(X_{0})}\frac{1}{x}\left[\frac{r(\nabla\psi\cdot\nu)}{(\int_{\partial B_{r}(X_{0})}\frac{1}{x}\psi^{2}d\mathcal{H}^{1})^{1/2}}-H(r)\frac{\psi}{(\int_{\partial B_{r}(X_{0})}\frac{1}{x}\psi^{2}d\mathcal{H}^{1})^{1/2}}\right]^{2}d\mathcal{H}^{1}\\
			&\quad+\frac{2}{r}V(r)\left(H(r)-\frac{3}{2}\right)+\frac{1}{r}Z(r)\left(H(r)-\frac{3}{2}\right)+\frac{K(r)}{\int_{\partial B_{r}(X_{0})}\frac{1}{x}\psi^{2}d\mathcal{H}^{1}}.\label{Formula: H'(r)(1)}
		\end{split}
	\end{align}
	Here $K(r)$ is given in \eqref{Formula: K(r)}.
\end{theorem}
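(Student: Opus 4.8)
The plan is to follow the strategy of V\v{a}rv\v{a}ruc\v{a}--Weiss \cite{VW2012} (see also \cite{VW2014}): write the frequency as a quotient $H(r)=p(r)/h(r)$, differentiate numerator and denominator separately using the derivative identities for $I(r)$ and $J(r)$ already recorded in the proof of \cite[Lemma 3.1]{DHP2022} (i.e.\ \eqref{Formula: I'(r)} and \eqref{Formula: J'(r)}), and then reassemble by completing a square. Set $h(r):=\int_{\partial B_r(X_0)}\frac1x\psi^2\,d\mathcal{H}^1$. By the energy identity of Remark~\ref{Remark: Energy identity} the numerator of $D(r)$ equals $d(r):=r\int_{\partial B_r(X_0)}\frac1x\psi\nabla\psi\cdot\nu\,d\mathcal{H}^1$, so $D(r)=d(r)/h(r)$; writing $v(r)$ for the numerator of $V(r)$ we also have $V(r)=v(r)/h(r)$, $Z(r)=J_1(r)/h(r)$, and $H(r)=(d(r)-v(r))/h(r)=:p(r)/h(r)$. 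First I would compute $h'(r)$ from the elementary surface identity $\frac{d}{dr}\int_{\partial B_r(X_0)}g\,d\mathcal{H}^1=\int_{\partial B_r(X_0)}\bigl(\partial_\nu g+g/r\bigr)\,d\mathcal{H}^1$ with $g=\frac1x\psi^2$ (using $\partial_\nu(1/x)=-(x-x_0)/(rx^2)$ since $\nu=(X-X_0)/r$) together with the energy identity, obtaining
\[
h'(r)=\frac1r\bigl(h(r)+2d(r)-J_1(r)\bigr),\qquad\text{hence}\qquad \frac{h'(r)}{h(r)}=\frac1r\bigl(1+2D(r)-Z(r)\bigr).
\]

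Next I would differentiate $p(r)=d(r)-v(r)$. For the first piece, the definition \eqref{Formula: I(r)} of $I(r)$ gives $d(r)=r^4I(r)-r\int_{B_r(X_0)}xy\chi_{\{\psi>0\}}\,dX$, so differentiating with \eqref{Formula: I'(r)} and using $\int_{\partial B_r(X_0)}\frac1x\psi\nabla\psi\cdot\nu\,d\mathcal{H}^1=d(r)/r$ yields
\[
d'(r)=\frac1r d(r)+2r\!\int_{\partial B_r(X_0)}\!\tfrac1x(\nabla\psi\cdot\nu)^2\,d\mathcal{H}^1+K(r)+\sum_{i=1}^2 I_i(r)+3\!\int_{B_r(X_0)}\!\!xy\chi_{\{\psi>0\}}\,dX-r\!\int_{\partial B_r(X_0)}\!\!xy\chi_{\{\psi>0\}}\,d\mathcal{H}^1.
\]
For $v(r)$ one differentiates directly; the two running-integral terms contribute $\frac4r\cdot(\text{running term})+I_i(r)$ and $\frac4r\cdot(\text{running term})+\frac{3}{2r}J_1(r)$, so that the factor $\frac4r v(r)$ reproduces all running terms and one is left with
\[
v'(r)=\frac4r v(r)-3\!\int_{B_r(X_0)}\!\!y^+(x_0-x\chi_{\{\psi>0\}})\,dX+r\!\int_{\partial B_r(X_0)}\!\!y^+(x_0-x\chi_{\{\psi>0\}})\,d\mathcal{H}^1+\sum_{i=1}^2 I_i(r)+\frac{3}{2r}J_1(r).
\]
Subtracting, the $\sum_i I_i(r)$ terms cancel, the two $xy\chi_{\{\psi>0\}}$ terms cancel against the $x\chi_{\{\psi>0\}}$ part of $v'$ (using $y^+\chi_{\{\psi>0\}}=y\chi_{\{\psi>0\}}$), and the residual $x_0y^+$ volume and boundary terms cancel by the exact scaling $\int_{B_r(X_0)}y^+\,dX=r^3\int_{B_1}y^+\,dX$, $\int_{\partial B_r(X_0)}y^+\,d\mathcal{H}^1=3r^2\int_{B_1}y^+\,dX$. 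This leaves the clean identity
\[
p'(r)=\frac1r d(r)-\frac4r v(r)+2r\!\int_{\partial B_r(X_0)}\!\tfrac1x(\nabla\psi\cdot\nu)^2\,d\mathcal{H}^1+K(r)-\frac3{2r}J_1(r).
\]

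Finally I would assemble $H'(r)=p'(r)/h(r)-H(r)h'(r)/h(r)$, insert $h'/h$ from the first step, and rewrite everything through $D(r)$, $V(r)=D(r)-H(r)$, $Z(r)$ and $\int_{\partial B_r(X_0)}\frac1x(\nabla\psi\cdot\nu)^2$. Expanding the square and using $\int_{\partial B_r(X_0)}\frac1x\psi\nabla\psi\cdot\nu\,d\mathcal{H}^1=d(r)/r=D(r)h(r)/r$ and $\int_{\partial B_r(X_0)}\frac1x\psi^2\,d\mathcal{H}^1=h(r)$ gives the two algebraic identities
\[
\frac{2r}{h(r)}\!\int_{\partial B_r(X_0)}\!\tfrac1x(\nabla\psi\cdot\nu)^2\,d\mathcal{H}^1 = \frac2r\!\int_{\partial B_r(X_0)}\!\tfrac1x\Big[\tfrac{r\nabla\psi\cdot\nu}{\sqrt{h(r)}}-D(r)\tfrac{\psi}{\sqrt{h(r)}}\Big]^2 d\mathcal{H}^1+\frac{2D(r)^{2}}{r}
\]
and
\[
\frac{2r}{h(r)}\!\int_{\partial B_r(X_0)}\!\tfrac1x(\nabla\psi\cdot\nu)^2\,d\mathcal{H}^1 = \frac2r\!\int_{\partial B_r(X_0)}\!\tfrac1x\Big[\tfrac{r\nabla\psi\cdot\nu}{\sqrt{h(r)}}-H(r)\tfrac{\psi}{\sqrt{h(r)}}\Big]^2 d\mathcal{H}^1+\frac{4H(r)D(r)}{r}-\frac{2H(r)^{2}}{r}.
\]
Substituting the first identity and collecting the lower-order terms produces \eqref{Formula: H'(r)} (the remainder organizes itself exactly into $\frac2rV^2(r)+\frac2rV(r)(H(r)-\frac32)+\frac1rZ(r)(H(r)-\frac32)+K(r)/h(r)$), and substituting the second identity instead produces \eqref{Formula: H'(r)(1)}.

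The computation is essentially bookkeeping once \eqref{Formula: I'(r)}--\eqref{Formula: J'(r)} are in hand, so the points that genuinely need care are three. First, that $V(r)$ is well-defined, i.e.\ that $t\mapsto t^{-4}I_i(t)$ and $t\mapsto t^{-5}J_1(t)$ are integrable near $0$: the growth assumption \eqref{Formula: growth assumption} forces $|I_i(t)|\le Ct^4$ and, via the consequent bound $|\psi(X)|\le C|X-X_0|^{3/2}$, $|J_1(t)|\le Ct^5$, so both are bounded (this perturbative point is taken up again in Proposition~\ref{Lemma: V(r)}). Second, that $h(r)>0$, so $H(r)$ makes sense; this holds for $r$ in a full-measure subset of $(0,\delta)$ by the same reasoning as in \cite{VW2011, DHP2022}, since $X_0\in\partial\{\psi>0\}$. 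Third, that the functions of $r$ above are absolutely continuous, so the a.e.\ differentiation is legitimate — this is inherited from \cite[Lemma 3.1]{DHP2022}. The one thing to watch throughout is the tracking of the terms generated by the axis of symmetry — the $I_i$, $J_1$, $Z$ and the $xy\chi_{\{\psi>0\}}$/$x_0y^+$ terms — since the content of the theorem is precisely that, in contrast with the genuinely two-dimensional problem of \cite{VW2012}, these do not disrupt the frequency identity but only leave behind the correction $V$ and the factor $Z$.
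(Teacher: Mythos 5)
Your proposal is correct and takes essentially the same route as the paper: both write the frequency as a quotient, differentiate numerator and denominator via the precomputed identities \eqref{Formula: I'(r)}--\eqref{Formula: J'(r)} and the energy identity \eqref{Formula: Energy identity}, and then complete a square. The paper packages the numerator as $M(r)-x_0\int_{B_1}y^+\,dX$ with $J(r)$ downstairs (equivalently $p(r)=r^4(M(r)-x_0\int_{B_1}y^+\,dX)$, $h(r)=r^4J(r)$ in your notation), so your explicit cancellation of the $x_0y^+$ terms via scaling is absorbed there into the constant; the rest is the same bookkeeping.
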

\begin{proof}
	It follows from our definition of $D(r)$, $V(r)$ and \eqref{Formula: J(r)} that
	\begin{align}\label{Formula: tildeI(r)}
		\begin{split}
			&J(r)(D(r)-V(r))\\
			&=r^{-3}\int_{B_{r}(X_{0})}\left(\frac{1}{x}|\nabla\psi|^{2}-x\psi f(\psi)\right)dX-r^{-3}\int_{B_{r}(X_{0})}x_{0}y^{+}(1-\chi_{\{\psi>0\}})dX-r^{-4}e(r)\\
			&=r^{-3}\int_{B_{r}(X_{0})}\left(\frac{1}{x}|\nabla\psi|^{2}-x\psi f(\psi)+xy^{+}\chi_{\{\psi>0\}}\right)dX-\int_{0}^{r}t^{-4}\sum_{i=1}^{2}I_{i}(t)dt-\frac{3}{2}\int_{0}^{r}t^{-5}J_{1}(t)dt\\
			&\quad-r^{-3}\int_{B_{r}(X_{0})}x_{0}y^{+}dX.
		\end{split}
	\end{align}
	Define the quantity
	\begin{align}\label{Formula: perturbation of I(r)}
		\tilde{I}(r):=I(r)-\int_{0}^{r}t^{-4}\sum_{i=1}^{2}I_{i}(t)dt-\frac{3}{2}\int_{0}^{r}t^{-5}J_{1}(t)dt,
	\end{align}
	where $I(r)$, $I_{1}(r)$, $I_{2}(r)$ and $J_{1}(r)$ are defined in \eqref{Formula: I(r)}, \eqref{Formula: I1(r)}, \eqref{Formula: I2(r)} and \eqref{Formula: J1(r)} respectively. Then it follows from \eqref{Formula: I'(r)} that for a.e. $r\in(0,\delta)$,
	\begin{align}\label{Formula: tildeI'(r)}
		\tilde{I}'(r)=r^{-4}\left(2r\int_{\partial B_{r}(X_{0})}\frac{1}{x}(\nabla\psi\cdot\nu)^{2}d\mathcal{H}^{1}-3\int_{\partial B_{r}(X_{0})}\frac{1}{x}\psi\nabla\psi\cdot\nu d\mathcal{H}^{1}+K(r)\right)-\frac{3}{2}r^{-5}J_{1}(r).
	\end{align}
	Observe now that by the definition \eqref{Formula: H(r)} and \eqref{Formula: tildeI(r)},
	\begin{align*}
		H(r)=D(r)-V(r)=\frac{\tilde{I}(r)-r^{-3}\int_{B_{r}(X_{0})}x_{0}y^{+}dX}{J(r)}.
	\end{align*}
	Thus, it follows from \eqref{Formula: J'(r)}, \eqref{Formula: tildeI'(r)} and a direct computation that
	\begin{align}\label{Formula: H'(r)-1}
		\begin{split}
			H'(r)&=\frac{\tilde{I}'(r)}{J(r)}-\frac{(\tilde{I}(r)-r^{-3}\int_{B_{r}(X_{0})}x_{0}y^{+}dX)}{J(r)}\frac{J'(r)}{J(r)}\\
			&=\frac{\displaystyle\left(2r\int_{\partial B_{r}(X_{0})}\frac{1}{x}(\nabla\psi\cdot\nu)^{2}d\mathcal{H}^{1}-3\int_{\partial B_{r}(X_{0})}\frac{1}{x}\psi\nabla\psi\cdot\nu d\mathcal{H}^{1}+K(r)\right)}{\displaystyle\int_{\partial B_{r}(X_{0})}\frac{1}{x}\psi^{2}d\mathcal{H}^{1}}-\frac{3}{2}\frac{Z(r)}{r}\\
			&\quad-(D(r)-V(r))\dfrac{1}{r}\frac{\displaystyle\left(2r\int_{\partial B_{r}(X_{0})}\frac{1}{x}\psi\nabla\psi\cdot\nu d\mathcal{H}^{1}-3\int_{\partial B_{r}(X_{0})}\frac{1}{x}\psi^{2}d\mathcal{H}^{1}-J_{1}(r)\right)}{\displaystyle\int_{\partial B_{r}(X_{0})}\frac{1}{x}\psi^{2}d\mathcal{H}^{1}},
		\end{split}
	\end{align}
	where we have used the fact that $\frac{Z(r)}{r}=\frac{r^{-5}J_{1}(r)}{J(r)}$ (recalling \eqref{Formula: J(r)} and \eqref{Formula: J1(r)}). With the aid of the energy identity \eqref{Formula: Energy identity} mentioned in the Remark \ref{Remark: Energy identity}, it is easy to obtain the identity
	\begin{align}\label{Formula: D(r)(1)}
		D(r)=\frac{r\displaystyle\int_{\partial B_{r}(X_{0})}\frac{1}{x}\psi\nabla\psi\cdot\nu d\mathcal{H}^{1}}{\displaystyle\int_{\partial B_{r}(X_{0})}\frac{1}{x}\psi^{2}d\mathcal{H}^{1}}.
	\end{align}
	Introducing \eqref{Formula: D(r)(1)} into \eqref{Formula: H'(r)(1)} gives
	\begin{align*}
		H'(r)&=\frac{2}{r}\left[\frac{\displaystyle r^{2}\int_{\partial B_{r}(X_{0})}\frac{1}{x}(\nabla\psi\cdot\nu)^{2}d\mathcal{H}^{1}}{\displaystyle\int_{\partial B_{r}(X_{0})}\frac{1}{x}\psi^{2}d\mathcal{H}^{1}}-\frac{3}{2}D(r)\right]-\frac{3}{2}\frac{Z(r)}{r}\\
		&\quad-\frac{2}{r}(D(r)-V(r))\left[D(r)-\frac{3}{2}-\frac{Z(r)}{2}\right]+\frac{K(r)}{\displaystyle\int_{\partial B_{r}(X_{0})}\frac{1}{x}\psi^{2}d\mathcal{H}^{1}}\\
		&=\frac{2}{r}\left[\frac{\displaystyle r^{2}\int_{\partial B_{r}(X_{0})}\frac{1}{x}(\nabla\psi\cdot\nu)^{2}d\mathcal{H}^{1}}{\displaystyle\int_{\partial B_{r}(X_{0})}\frac{1}{x}\psi^{2}d\mathcal{H}^{1}}-D^{2}(r)\right]+\frac{Z(r)}{r}\left[D(r)-V(r)-\frac{3}{2}\right]\\
		&\quad+\frac{2}{r}V(r)\left(D(r)-\frac{3}{2}\right)+\frac{K(r)}{\displaystyle\int_{\partial B_{r}(X_{0})}\frac{1}{x}\psi^{2}d\mathcal{H}^{1}}\\
		&=\frac{2}{r}\left[\frac{\displaystyle r^{2}\int_{\partial B_{r}(X_{0})}\frac{1}{x}(\nabla\psi\cdot\nu)^{2}d\mathcal{H}^{1}}{\displaystyle\int_{\partial B_{r}(X_{0})}\frac{1}{x}\psi^{2}d\mathcal{H}^{1}}-D^{2}(r)\right]\\
		&\quad+\frac{2}{r}V(r)\left(V(r)+H(r)-\frac{3}{2}\right)+\frac{1}{r}Z(r)\left(H(r)-\frac{3}{2}\right)+\frac{K(r)}{\displaystyle\int_{\partial B_{r}(X_{0})}\frac{1}{x}\psi^{2}d\mathcal{H}^{1}},
	\end{align*}
	where we have used the fact that $H(r)=D(r)-V(r)$ in the second identity. Using the identity \eqref{Formula: D(r)(1)} once again, we obtain \eqref{Formula: H'(r)}. Observe now that
	\begin{align*}
		&\int_{\partial B_{r}(X_{0})}\frac{1}{x}[r(\nabla\psi\cdot\nu)-D(r)\psi]^{2}d\mathcal{H}^{1}\\
		&=\int_{\partial B_{r}(X_{0})}\frac{1}{x}[r(\nabla\psi\cdot\nu)-H(r)\psi]^{2}d\mathcal{H}^{1}-2V(r)r\int_{\partial B_{r}(X_{0})}\frac{1}{x}\psi\nabla\psi\cdot\nu d\mathcal{H}^{1}+2H(r)V(r)\int_{\partial B_{r}(X_{0})}\frac{1}{x}\psi^{2}d\mathcal{H}^{1}\\
		&\quad+V^{2}(r)\int_{\partial B_{r}(X_{0})}\frac{1}{x}\psi^{2}d\mathcal{H}^{1}\\
		&=\int_{\partial B_{r}(X_{0})}\frac{1}{x}[r(\nabla\psi\cdot\nu)-H(r)\psi]^{2}d\mathcal{H}^{1}-2V(r)D(r)\int_{\partial B_{r}(X_{0})}\frac{1}{x}\psi^{2}d\mathcal{H}^{1}+2H(r)V(r)\int_{\partial B_{r}(X_{0})}\frac{1}{x}\psi^{2}d\mathcal{H}^{1}\\
		&\quad+V^{2}(r)\int_{\partial B_{r}(X_{0})}\frac{1}{x}\psi^{2}d\mathcal{H}^{1}\\
		&=\int_{\partial B_{r}(X_{0})}\frac{1}{x}[r(\nabla\psi\cdot\nu)-H(r)\psi]^{2}d\mathcal{H}^{1}-V^{2}(r)\int_{\partial B_{r}(X_{0})}\frac{1}{x}\psi^{2}d\mathcal{H}^{1}.
	\end{align*}
	Here we have used \eqref{Formula: D(r)(1)} in the third identity. Then \eqref{Formula: H'(r)(1)} follows.
\end{proof}
\begin{remark}\label{Remark: comparison of two pbm}
	In this remark, we make a comparison between the frequency formula \eqref{Formula: H'(r)}, \eqref{Formula: H'(r)(1)} in our context and that for the two-dimensional problem (cf. \cite[(6.1), (6.2) in Theorem 6.7]{VW2012}), and explain the perturbation idea. Recalling first the frequency formulas for the two-dimensional problem in \cite{VW2012}, which have the form
	\begin{align*}
		H'(r)&=\frac{2}{r}\int_{\partial B_{r}(X_{0})}\left[\frac{r(\nabla\psi\cdot\nu)}{\int_{\partial B_{r}(X_{0})}\psi^{2}d\mathcal{H}^{1}}-D(r)\frac{\psi}{\int_{\partial B_{r}(X_{0})}\psi^{2}d\mathcal{H}^{1}}\right]^{2}d\mathcal{H}^{1}\\
		&\quad+\frac{2}{r}V^{2}(r)+\frac{2}{r}V(r)\left(H(r)-\frac{3}{2}\right)+\frac{K(r)}{\int_{\partial B_{r}(X_{0})}\psi^{2}d\mathcal{H}^{1}}\\
		&=\frac{2}{r}\int_{\partial B_{r}(X_{0})}\left[\frac{r(\nabla\psi\cdot\nu)}{\int_{\partial B_{r}(X_{0})}\psi^{2}d\mathcal{H}^{1}}-H(r)\frac{\psi}{\int_{\partial B_{r}(X_{0})}\psi^{2}d\mathcal{H}^{1}}\right]^{2}d\mathcal{H}^{1}\\
		&\quad+\frac{2}{r}V(r)\left(H(r)-\frac{3}{2}\right)+\frac{K(r)}{\int_{\partial B_{r}(X_{0})}\psi^{2}d\mathcal{H}^{1}}.
	\end{align*}
	It is common that both two formulas consist of a non-negative part, which is the first integral on the right-hand side of $H'(r)$. We also note that in both problems, $D(r)$ is the “ mean frequency” and is different because the governing equations for the two problems are different. Here the major differences arise in $V(r)$, $Z(r)$ and $K(r)$. As for $V(r)$, it is defined to be the non-negative quantity
	\begin{align*}
		V(r):=\frac{r\int_{B_{r}(X_{0})}y^{+}(1-\chi_{ \{ \psi>0\} })dX}{\int_{\partial B_{r}(X_{0})}\psi^{2}d\mathcal{H}^{1}}
	\end{align*}
	in the two-dimensional problem. However, recalling \eqref{Formula: V(r)}, we see that it has no sign and is consist of a non-negative part plus a perturbation term $e(r)$. The components of the perturbation term $e(r)$ actually encode the axisymmetric nature of a problem. In fact, it consists of a term $\int_{B_{r}(X_{0})}(x-x_{0})y^{+}\chi_{\{\psi>0\}}dX$, which can be viewed as the difference of the density with respect to the $x$-variable, and two terms $\int_{0}^{r}\sum_{i=1}^{2}t^{-4}I_{i}(t)+\frac{3}{2}\int_{0}^{r}t^{-5}J_{1}(t)dt$ caused by the axis of symmetry. These terms are vanishing for the two-dimensional problem and we will eventually prove that near each degenerate points, $V(r)\geqslant0$ for every $r\in(0,r_{0})$ with $r_{0}>0$ sufficiently small. We achieve this by showing that the perturbation term $e(r)$ (which has no sign) tends to zero faster than the leading term $r\int_{B_{r}(X_{0})}x_{0}y^{+}(1-\chi_{\{\psi>0\}})dX$ (which is non-negative). On the other hand, note that the term $Z(r)$ is also new in our axisymmetric case and introduces new difficulties. This is based on a simple fact that for axisymmetric problems, the corresponding energy is no longer $\int_{B_{r}(X_{0})}|\nabla u|^{2}dX$, but $\int_{B_{r}(X_{0})}\frac{1}{x}|\nabla u|^{2}dX$, and we have to deal with the derivatives with respect to the weight $\tfrac{1}{x}$ (cf. \eqref{Formula: I'(r)} and \eqref{Formula: J'(r)}). Incidentally, we point out that although the term $K(r)$ in two formulas are also not the same, we still hope to deal with this term similarly as in \cite{VW2012}, based on a simple fact that the weight $\frac{1}{x}$ is uniformly bounded in $B_{r}(X_{0})$ for every $r\in(0,\d)$.
\end{remark}
In what follows, we first consider the term $K(r)$ in the frequency formula associates with the vorticity strength $f$. First note that when the vorticity is negative, the horizontally flat singularities are possible even for the two-dimensional problem. What is of particular interest to us, as suggested by V\v{a}rv\v{a}ruc\v{a} and Weiss in \cite[Page 863]{VW2012}, is the case when the vorticity is $0$ at the free surface, and may have infinitely many sign changes accumulating there. Based on this observation, we impose the following growth assumption on $f$.
\begin{assumption}\label{Assumption: f}
	There exists a constant $C<+\infty$ such that
	\begin{align}\label{Formula: f(z)}
		|f(z)|\leqslant Cz\quad\text{ for all }\quad z\in(0,z_{0}).
	\end{align}
	Note that \eqref{Formula: f(z)} also implies
	\begin{align}\label{Formula: F(z)}
		|F(z)|\leqslant Cz^{2}/2\quad\text{ for all }\quad z\in(0,z_{0}).
	\end{align}
\end{assumption}
The following Lemma was inspired by \cite[(4.11)]{GL1986} and \cite[Lemma 6.9]{VW2012}.
\begin{lemma}\label{Lemma: vort}
	Let $\psi$ be a variational solution of the problem \eqref{Formula: model problem1}. Then for each $X_{0}=(x_{0},y_{0})\in S_{\psi}^{s}$ and for all $r>0$ sufficiently small,
	\begin{align}\label{Formula: vort(1)}
		r\int_{\partial B_{r}(X_{0})}\frac{1}{x}\psi^{2}d\mathcal{H}^{1}=\int_{B_{r}(X_{0})}\left[2\frac{\psi^{2}}{x}-\frac{x-x_{0}}{x^{2}}\psi^{2}+\left(\frac{|\nabla\psi|^{2}}{x}-x\psi f(\psi)\right)(r^{2}-|X-X_{0}|^{2})\right]dX.
	\end{align}
\end{lemma}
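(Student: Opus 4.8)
The identity \eqref{Formula: vort(1)} is a Rellich--Pohozaev type relation, and the plan is to derive it by two successive applications of the divergence theorem, closing the computation with the energy identity \eqref{Formula: Energy identity}; this mirrors the two-dimensional computation of \cite[Lemma 6.9]{VW2012}, which is itself in the spirit of \cite[(4.11)]{GL1986}. Throughout write $B_r=B_r(X_0)$, $\rho(X)=|X-X_0|$ and $\phi(r)=\int_{\partial B_r}\tfrac1x\psi^2\,d\mathcal{H}^1$, so that the left-hand side of \eqref{Formula: vort(1)} is $r\phi(r)$. Both sides are continuous in $r$ (the left-hand side by the trace theorem for $W^{1,2}_{\mathrm{w}}$), so it is enough to verify the identity for a.e.\ small $r>0$; for such $r$ one has $\overline{B_r}\subset\subset\Omega$, and in the relevant situation $X_0\in S_\psi^s$ has $x_0>0$, so $\overline{B_r}\subset\{x>0\}$ and the weight $\tfrac1x$ is bounded above and below on $B_r$, making every integral below finite.

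First I would apply Gauss--Green to the vector field $\tfrac1x\psi^2(X-X_0)$ on $B_r$: since $(X-X_0)\cdot\nu=r$ on $\partial B_r$ the boundary flux equals $r\phi(r)$, and since $\nabla\psi=0$ a.e.\ on $\{\psi=0\}$ a direct computation gives
\[
\dvg\!\Big(\tfrac1x\psi^2(X-X_0)\Big)=\tfrac{2}{x}\psi^2-\tfrac{x-x_0}{x^2}\psi^2+\tfrac{2}{x}\psi\,\nabla\psi\cdot(X-X_0)\in L^1(B_r),
\]
so that $r\phi(r)=\int_{B_r}\big(\tfrac{2}{x}\psi^2-\tfrac{x-x_0}{x^2}\psi^2+\tfrac{2}{x}\psi\,\nabla\psi\cdot(X-X_0)\big)\,dX$. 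Next I would identify the cross term with $\int_{B_r}\big(\tfrac{|\nabla\psi|^2}{x}-x\psi f(\psi)\big)(r^2-\rho^2)\,dX$. Setting $G(r)=\int_{B_r}\tfrac{2}{x}\psi\,\nabla\psi\cdot(X-X_0)\,dX$ and $T(r)=\int_{B_r}\big(\tfrac{|\nabla\psi|^2}{x}-x\psi f(\psi)\big)(r^2-\rho^2)\,dX$, both are absolutely continuous and vanish at $r=0$; differentiating and using $X-X_0=r\nu$ on $\partial B_r$ gives $G'(r)=2r\int_{\partial B_r}\tfrac1x\psi\,\nabla\psi\cdot\nu\,d\mathcal{H}^1$, while the factor $(r^2-\rho^2)$ annihilates the spherical term in $T'(r)$ and leaves $T'(r)=2r\int_{B_r}\big(\tfrac{|\nabla\psi|^2}{x}-x\psi f(\psi)\big)\,dX$. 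By the energy identity \eqref{Formula: Energy identity} one has $\int_{B_r}\big(\tfrac{|\nabla\psi|^2}{x}-x\psi f(\psi)\big)\,dX=\int_{\partial B_r}\tfrac1x\psi\,\nabla\psi\cdot\nu\,d\mathcal{H}^1$, hence $T'(r)=G'(r)$ and therefore $G\equiv T$; substituting into the previous display yields \eqref{Formula: vort(1)}. The energy identity is stated in Remark~\ref{Remark: Energy identity} for $X_0\in S_\psi^s$, but for a general $X_0\in\Omega$ it follows verbatim as \cite[(3.3)]{DHP2022}, i.e.\ by testing the first equation of \eqref{Formula: model problem1} with $\psi$ over $B_r\cap\{\psi>0\}$ and discarding the flux on $\partial\{\psi>0\}\cap B_r$, where $\psi=0$.

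The algebra is routine, so the only real difficulty will be justifying the two integrations by parts across the free boundary $\Omega\cap\partial\{\psi>0\}$, which is not a priori smooth. Here I would lean on the definition of a variational solution: $\psi\in C^0(\Omega)\cap C^2(\Omega\cap\{\psi>0\})$ vanishes continuously on the free boundary, $\nabla\psi=0$ a.e.\ on $\{\psi=0\}$, and $\psi,\nabla\psi\in L^2_{\mathrm{loc}}$ away from $\{x=0\}$; consequently $\dvg\!\big(\tfrac1x\psi^2(X-X_0)\big)$ and $\dvg\!\big(\tfrac1x\psi\nabla\psi\big)$ lie in $L^1_{\mathrm{loc}}$ and no singular part concentrates on the free boundary — equivalently, integrating over $\{\psi>0\}$ one discards the boundary flux there because each integrand carries the factor $\psi=0$. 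When $x_0=0$ one would additionally invoke the boundary behaviour of $\psi$ near the axis $\{x=0\}$ built into the definition of variational solutions, but this case does not arise in our application since $S_\psi^s\subset\{x_0>0\}$.
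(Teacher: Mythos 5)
Your proposal is correct, and it arrives at \eqref{Formula: vort(1)} by the same Rellich--Pohozaev mechanism as the paper, but the two arguments are packaged differently. Both proofs contain the divergence-theorem step on the field $\tfrac1x\psi^2(X-X_0)$ (in the paper this appears as the second integration by parts, written as moving the derivative off $\nabla(\psi^2)$ onto $\nabla(r^2-|X-X_0|^2)/x$), which produces $r\int_{\partial B_r}\tfrac1x\psi^2\,d\mathcal H^1$ on the boundary and the two bulk terms $\tfrac2x\psi^2-\tfrac{x-x_0}{x^2}\psi^2$ plus the cross term $\tfrac2x\psi\nabla\psi\cdot(X-X_0)$. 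The difference is how the cross term is identified with $\int_{B_r}\bigl(\tfrac{|\nabla\psi|^2}{x}-x\psi f(\psi)\bigr)(r^2-\rho^2)\,dX$: the paper does this by a single direct integration by parts of $\dvg\bigl(\tfrac{\nabla(\psi^2)}{x}\bigr)(r^2-\rho^2)$ (exploiting that $(r^2-\rho^2)$ vanishes on $\partial B_r$) together with the pointwise identity $\dvg\bigl(\tfrac{\nabla(\psi^2)}{x}\bigr)=2\tfrac{|\nabla\psi|^2}{x}-2x\psi f(\psi)$, whereas you differentiate both sides in $r$ and close with the energy identity \eqref{Formula: Energy identity}. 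Your route has the small advantage of reusing the energy identity rather than redoing the equivalent computation in place, and it cleanly separates the geometric divergence step from the PDE input; the paper's route is self-contained and avoids an appeal to a result stated (in the paper) only for $X_0\in S_\psi^s$. Your note that the energy identity extends verbatim to any $X_0\in\Omega$ with $x_0>0$ and $r$ small is correct and necessary for the argument as you have organized it, and your caveat about $x_0=0$ is apposite though irrelevant to the application.
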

\begin{proof}
	An integration by parts yields
	\begin{align}\label{Formula: vort(1-1)}
		\begin{split}
			&\int_{B_{r}(X_{0})}\operatorname{div}\br{\frac{\nabla(\psi^{2})}{x}}\cdot(r^{2}-|X-X_{0}|^{2})dX\\
			&=-\int_{B_{r}(X_{0})}\frac{\nabla(\psi^{2})}{x}\cdot\nabla\br{r^{2}-|X-X_{0}|^{2}}dX+\underbrace{\int_{\partial  B_{r}(X_{0})}\frac{\nabla(\psi^{2})}{x}\cdot\nu\cdot\br{r^{2}-|X-X_{0}|^{2}}d\mathcal{H}^{1}}_{=0}\\
			&=\int_{B_{r}(X_{0})}\psi^{2}\operatorname{div}\br{\frac{\nabla(r^{2}-|X-X_{0}|^{2}}{x}}dX-\int_{\partial  B_{r}(X_{0})}\frac{\psi^{2}}{x}\nabla\br{r^{2}-|X-X_{0}|^{2}}\cdot\frac{X-X_{0}}{|X-X_{0}|}d\mathcal{H}^{1}\\
			&=\int_{B_{r}(X_{0})}\psi^{2}\frac{\Delta\br{r^{2}-|X-X_{0}|^{2}}}{x}dX-\int_{B_{r}(X_{0})}\frac{\psi^{2}}{x^{2}}\pd{(r^{2}-|X-X_{0}|^{2})}{x}dX+2r\int_{\partial  B_{r}(X_{0})}\frac{\psi^{2}}{x}d\mathcal{H}^{1}\\
			&=-4\int_{B_{r}(X_{0})}\frac{\psi^{2}}{x}dX+2\int_{B_{r}(X_{0})}\frac{x-x_{0}}{x^{2}}\psi^{2}dX+2r\int_{\partial B_{r}(X_{0})}\frac{\psi^{2}}{x}d\mathcal{H}^{1}.
		\end{split}
	\end{align}
	On the other hand, noticing that
	\begin{align}\label{Formula: vort(1-2)}
		\operatorname{div}\br{\frac{\nabla(\psi^{2})}{x}}=2\operatorname{div}\br{\psi\frac{\nabla\psi}{x}}=2\nabla\psi\cdot\frac{\nabla\psi}{x}+2\psi\operatorname{div}\br{\frac{\nabla\psi}{x}}=2\frac{|\nabla\psi|^{2}}{x}-2x\psi f(\psi).
	\end{align}
	A combination of \eqref{Formula: vort(1-1)} and \eqref{Formula: vort(1-2)} gives the desired result.
\end{proof}
As a direct corollary of Lemma \ref{Lemma: vort} and the Assumption \ref{Assumption: f}, we obtain the following estimates for $K(r)$.
\begin{corollary}\label{Corollary: vort}
	Let $\psi$ be a variational solution of the problem \eqref{Formula: model problem1} with the growth assumption \eqref{Formula: growth assumption}. Assume that the nonlinearity $f$ satisfies \eqref{Formula: f(z)}. Then there exists $r_{0}>0$ sufficiently small such that
	\begin{align}\label{Formula: vort(2)}
		r\int_{\partial B_{r}(X_{0})}\frac{1}{x}\psi^{2}d\mathcal{H}^{1}\geqslant\int_{B_{r}(X_{0})}\frac{1}{x}\psi^{2}dX\quad\text{ for all }\quad r\in(0,r_{0}),
	\end{align}
	and
	\begin{align}\label{Formula: vort(3)}
		|K(r)|\leqslant C_{0}r\int_{\partial B_{r}(X_{0})}\frac{1}{x}\psi^{2}d\mathcal{H}^{1}\quad\text{ for all }\quad r\in(0,r_{0}).
	\end{align}
	Here $C_{0}$ depends only on $x_{0}$.
\end{corollary}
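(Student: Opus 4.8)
The plan is to read off both assertions directly from the identity \eqref{Formula: vort(1)} of Lemma~\ref{Lemma: vort}. The single structural fact that makes everything uniform is that for $r\in(0,\delta)$ one has $r<x_{0}/2$, so $x_{0}/2<x<3x_{0}/2$ throughout $B_{r}(X_{0})$; in particular the weight $1/x$ is comparable to $1/x_{0}$ with constants that do not depend on $r$.

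For \eqref{Formula: vort(2)} I would first collapse the two lower–order boundary terms appearing in \eqref{Formula: vort(1)} into a manifestly non-negative one,
\[
\frac{\psi^{2}}{x}-\frac{x-x_{0}}{x^{2}}\psi^{2}=\frac{x_{0}\psi^{2}}{x^{2}}\geqslant0,
\]
so that subtracting $\int_{B_{r}(X_{0})}\tfrac{1}{x}\psi^{2}\,dX$ from both sides of \eqref{Formula: vort(1)} leaves
\[
r\int_{\partial B_{r}(X_{0})}\tfrac{1}{x}\psi^{2}\dhone-\int_{B_{r}(X_{0})}\tfrac{1}{x}\psi^{2}\,dX
=\int_{B_{r}(X_{0})}\Big[\tfrac{x_{0}\psi^{2}}{x^{2}}+\big(\tfrac{|\nabla\psi|^{2}}{x}-x\psi f(\psi)\big)\big(r^{2}-|X-X_{0}|^{2}\big)\Big]dX.
\]
On $B_{r}(X_{0})$ the factor $r^{2}-|X-X_{0}|^{2}$ is non-negative and so is $|\nabla\psi|^{2}/x$, so that contribution only helps and may simply be discarded. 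The only term that can be negative is $-x\psi f(\psi)\,(r^{2}-|X-X_{0}|^{2})$, which by \eqref{Formula: f(z)} and $x<3x_{0}/2$ is bounded below by $-C'(x_{0})\,\psi^{2}r^{2}$. Since $x_{0}/x^{2}$ is bounded below by a positive constant depending only on $x_{0}$, the right-hand side is $\geqslant\int_{B_{r}(X_{0})}\psi^{2}\big(x_{0}/x^{2}-C'(x_{0})r^{2}\big)\,dX\geqslant0$ once $r$ is small, which is \eqref{Formula: vort(2)}.

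For \eqref{Formula: vort(3)} I would estimate $K(r)$ termwise from its definition \eqref{Formula: K(r)}. Using $|F(z)|\leqslant Cz^{2}/2$ from \eqref{Formula: F(z)}, $|f(z)|\leqslant Cz$ from \eqref{Formula: f(z)}, and $x_{0}/2<x<3x_{0}/2$, each of the integrands $2xF(\psi)-x\psi f(\psi)$ and $2x_{0}F(\psi)-6xF(\psi)$ is bounded in absolute value by $C''(x_{0})\psi^{2}$, hence, writing $\psi^{2}=x\cdot\tfrac{1}{x}\psi^{2}\leqslant\tfrac{3x_{0}}{2}\cdot\tfrac{1}{x}\psi^{2}$, by $C'''(x_{0})\tfrac{1}{x}\psi^{2}$. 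This gives
\[
|K(r)|\leqslant C_{1}(x_{0})\Big(r\int_{\partial B_{r}(X_{0})}\tfrac{1}{x}\psi^{2}\dhone+\int_{B_{r}(X_{0})}\tfrac{1}{x}\psi^{2}\,dX\Big),
\]
and the solid integral is absorbed into the spherical one by \eqref{Formula: vort(2)}, yielding \eqref{Formula: vort(3)} with $C_{0}=2C_{1}(x_{0})$; the same $r_{0}$ chosen for \eqref{Formula: vort(2)} works here since this step only uses the bounds on $x$, valid for all $r<\delta$.

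There is no genuine obstacle: the proof is bookkeeping. The two points that deserve attention are the algebraic identity $\tfrac{\psi^{2}}{x}-\tfrac{x-x_{0}}{x^{2}}\psi^{2}=\tfrac{x_{0}\psi^{2}}{x^{2}}$ and the observation that the gradient term in \eqref{Formula: vort(1)} carries a favourable sign and can therefore be thrown away, together with the fact that all constants — and the threshold $r_{0}$, dictated solely by $C'(x_{0})r_{0}^{2}\leqslant\inf_{B_{r_{0}}(X_{0})}x_{0}/x^{2}$ — are uniform in $r$ precisely because $1/x$ is uniformly bounded on $B_{r}(X_{0})$ for $r<\delta$. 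Incidentally, the growth assumption \eqref{Formula: growth assumption} is not actually invoked in this corollary, since the only term it would control, $|\nabla\psi|^{2}/x$, is discarded rather than estimated.
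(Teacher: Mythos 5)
Your proof is correct and takes essentially the same route as the paper: read both inequalities directly off the identity in Lemma~\ref{Lemma: vort}, discard the favourably-signed gradient term, use \eqref{Formula: f(z)}--\eqref{Formula: F(z)} together with $x_{0}/2<x<3x_{0}/2$ to control the $f$-terms, and absorb the solid integral via \eqref{Formula: vort(2)}. The only cosmetic difference is that you simplify $\tfrac{\psi^{2}}{x}-\tfrac{x-x_{0}}{x^{2}}\psi^{2}=\tfrac{x_{0}\psi^{2}}{x^{2}}$ after moving the solid integral across, whereas the paper keeps the coefficient $2-\tfrac{x-x_{0}}{x}-Cx^{2}(r^{2}-|X-X_{0}|^{2})\geqslant1$; your side remark that \eqref{Formula: growth assumption} is not actually invoked in this corollary is accurate.
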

\begin{proof}
	It follows from \eqref{Formula: vort(1)} and the Assumption \ref{Assumption: f} that
	\begin{align*}
		r\int_{\partial B_{r}(X_{0})}\frac{1}{x}\psi^{2}d\mathcal{H}^{1}\geqslant\int_{B_{r}(X_{0})}\left(2-\frac{x-x_{0}}{x}-Cx^{2}(r^{2}-|X-X_{0}|^{2})\right)\frac{\psi^{2}}{x}dX.
	\end{align*}
	Consequently, there exists a $r_{0}\in(0,r)$ sufficiently small so that
	\begin{align*}
		2-\frac{x-x_{0}}{x}-Cx^{2}(r^{2}-|X-X_{0}|^{2})\geqslant1\quad\text{ for all }\quad r\in(0,r_{0})\quad\text{ and }\quad X\in B_{r}(X_{0}),
	\end{align*}
	and this proves \eqref{Formula: vort(2)}. As for \eqref{Formula: vort(3)}, recalling the definition of $K(r)$ \eqref{Formula: K(r)} and applying \eqref{Formula: F(z)}, one has
	\begin{align*}
		|K(r)|\leqslant\int_{B_{r}(X_{0})}2x_{0}x\cdot\frac{F(\psi)}{x}+6x^{2}\cdot\frac{F(\psi)}{x}dX+r\int_{\partial B_{r}(X_{0})}2x^{2}\cdot\frac{F(\psi)}{x}+x^{2}\cdot\frac{\psi f(\psi)}{x}d\mathcal{H}^{1}.
	\end{align*}
	With the help of \eqref{Formula: vort(2)}, we obtain the desired result.
\end{proof}
\begin{proposition}\label{Proposition: fre(2)}
	Let $\psi$ be a variational solution of \eqref{Formula: model problem1} with the growth assumption \eqref{Formula: growth assumption}. Assume that the nonlinearity $f$ satisfies \eqref{Formula: f(z)}. Let $X_{0}\in\Sigma_{\psi}$, then the following holds, for some $r_{0}\in(0,\delta)$ sufficiently small.
	\begin{enumerate}
		\item There exists a positive constant $C_{1}$ such that
		\begin{align*}
			H(r)-\frac{3}{2}\geqslant-C_{1}r^{2}\quad\text{ for all }\quad r\in(0,r_{0}).
		\end{align*}
		\item The function $r\mapsto e^{\beta r^{2}}J(r)$ is nondecreasing on $(0,r_{0})$.
		\item $r\mapsto\frac{1}{r}V^{2}(r)\in L^{1}(0,r_{0})$.
		\item The function $r\mapsto H(r)$ has a right limit $H(0+)\geqslant\frac{3}{2}$.
		\item The function
		\begin{align*}
			H'(r)-\frac{2}{r}\int\limits_{\partial B_{r}(X_{0})}\frac{1}{x}\left[\frac{r(\nabla\psi\cdot\nu)}{(\int_{\partial B_{r}(X_{0})}\tfrac{1}{x}\psi^{2}\:d\mathcal{H}^{1})^{1/2}}-H(r)\frac{\psi}{(\int_{\partial B_{r}(x^{0})}\tfrac{1}{x}\psi^{2}\:d\mathcal{H}^{1})^{1/2}}\right]^{2}\:d\mathcal{H}^{1}
		\end{align*}
		is bounded below by a function in $L^{1}(0,r_{0})$.
	\end{enumerate}	
\end{proposition}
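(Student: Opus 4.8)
The plan is to carry over, term by term, the argument of \cite[Section 6]{VW2012}, the extra work being to absorb the axisymmetric corrections $I_1,I_2,J_1$ and the vorticity term $K$ as perturbations by means of Corollary~\ref{Corollary: vort}, the growth assumption \eqref{Formula: growth assumption}, and the decomposition $V(r)=\widetilde V(r)+O(r)$ with $\widetilde V\ge0$ announced in Remark~\ref{Remark: comparison of two pbm}. It is convenient to write $g(r):=\int_{B_r(X_0)}\frac1x\psi^2\,dX$, so that $\int_{\partial B_r(X_0)}\frac1x\psi^2\,d\mathcal H^1=g'(r)$ and $J(r)=r^{-4}g'(r)$; Corollary~\ref{Corollary: vort} then gives $g(r)\le r\,g'(r)$ (hence $r\mapsto r^{-1}g(r)$ is nondecreasing) and $|K(r)|\le C_0\,r\,g'(r)$, while \eqref{Formula: growth assumption} gives $\psi=O(r^{3/2})$ on $B_\delta(X_0)$, so that $g(r)=O(r^5)$ and $g'(r)=O(r^4)$.

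For item (1) I start from the identity $H(r)=\bigl(M(r)-x_0\int_{B_1}y^+\,dX\bigr)/J(r)$ obtained in the proof of Theorem~\ref{Theorem: freq}. Inserting $M(r)=\Phi(r)+\frac32 J(r)-\sum_i\int_0^r t^{-4}I_i(t)\,dt-\frac32\int_0^r t^{-5}J_1(t)\,dt$ and, since $X_0\in\Sigma_\psi$, the equality $\Phi(0+)=x_0\int_{B_1}y^+\,dX$, I obtain
\[
H(r)-\tfrac32=\frac1{J(r)}\Bigl(\Phi(r)-\Phi(0+)-\sum_i\int_0^r t^{-4}I_i(t)\,dt-\tfrac32\int_0^r t^{-5}J_1(t)\,dt\Bigr).
\]
Integrating the Weiss identity \eqref{Formula: derivatives of Weiss-boundary adjusted energy} on $(0,r)$ and discarding its nonnegative leading term makes the $I_i$ and $J_1$ contributions cancel \emph{exactly} against those in the numerator, leaving the clean lower bound
\[
H(r)-\tfrac32\ \ge\ \frac1{J(r)}\int_0^r s^{-4}K(s)\,ds .
\]
It then remains to show $\bigl|\int_0^r s^{-4}K(s)\,ds\bigr|\le C_1 r^2 J(r)$. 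With $|K(s)|\le C_0 s\,g'(s)$ and an integration by parts this is equivalent to bounding $\int_0^r s\,J(s)\,ds$ by a fixed multiple of $r^2 J(r)$, i.e.\ to a one-sided almost-monotonicity of $J$; but that is precisely item (2). Consequently items (1) and (2) must be proved \emph{together}: on any interval $(0,r]$ on which $s\mapsto e^{\beta s^2}J(s)$ is already known to be nondecreasing one gets $\int_0^r s\,J(s)\,ds\le C r^2 J(r)$, hence (1) there with a constant independent of $r$; item (2) below then propagates the monotonicity onto a slightly larger interval, and a continuity argument — with base case supplied by $\psi=O(r^{3/2})$ at $X_0$ — carries both statements down to $0$.

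The remaining items follow the \cite{VW2012} scheme. Item (2): from \eqref{Formula: J'(r)}, \eqref{Formula: D(r)(1)} and $Z(r)=J_1(r)/g'(r)$ one has $J'(r)/J(r)=\frac2r\bigl(H(r)-\frac32\bigr)+\frac2r V(r)-\frac{Z(r)}r$, and the perturbation estimate of Remark~\ref{Remark: comparison of two pbm} — which says that the non-sign-definite combination $\frac2r V(r)-\frac{Z(r)}r$ is $\frac2r\widetilde V(r)$, $\widetilde V\ge0$, up to a term that is $O(r)$ — combined with item (1) gives $J'(r)/J(r)\ge-\beta r$. Item (5) is read off \eqref{Formula: H'(r)(1)}: the first integral is nonnegative, $|K(r)/g'(r)|\le C_0 r$, $\bigl|\frac1r Z(r)(H(r)-\frac32)\bigr|\le C|H(r)-\frac32|$, and $\frac2r V(r)\bigl(H(r)-\frac32\bigr)=\frac2r\widetilde V(r)\bigl(H(r)-\frac32\bigr)+O(|H(r)-\frac32|)$ with $\widetilde V\ge0$ and $H(r)-\frac32\ge-C_1 r^2$, so $H'(r)$ minus its nonnegative part is bounded below by an $L^1(0,r_0)$ function once $H$ is known to be bounded — which integrating \eqref{Formula: H'(r)(1)} provides. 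Item (3): writing $\Theta(r)$ for the nonnegative integral in \eqref{Formula: H'(r)(1)}, the final identity in the proof of Theorem~\ref{Theorem: freq} gives $\Theta(r)\ge\frac2r V^2(r)$; integrating \eqref{Formula: H'(r)(1)} over $(\varepsilon,r_0)$ and using $V\ge\widetilde V-Cr\ge-Cr$, $H-\frac32\ge-C_1 r^2$, item (1) and Young's inequality to absorb the cross term then yields $\int_0^{r_0}\frac1r V^2<\infty$. Item (4): by item (5), $H$ plus a suitable $L^1(0,r_0)$ primitive is nondecreasing and $H$ is bounded, so $H(0+)$ exists; and $H(0+)\ge\frac32$ follows by letting $r\to0$ in $H(r)\ge\frac32-C_1 r^2$.

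The main obstacle is the $K$-term estimate in item (1). In the two-dimensional problem of \cite{VW2012} the needed monotonicity of $J$ (up to the exponential factor) comes essentially for free, whereas here $J(r)\to0$ at a degenerate stagnation point with no a priori rate, so $\int_0^r s^{-4}K(s)\,ds$ cannot safely be divided by $J(r)$ until one knows that $J$ does not decay too fast — which is exactly why items (1) and (2) have to be bootstrapped simultaneously rather than in sequence, and why the crude perturbation bounds of Corollary~\ref{Corollary: vort} (in particular $g(r)\le r\,g'(r)$), together with the sign of $\widetilde V$ from Remark~\ref{Remark: comparison of two pbm}, are the crucial technical inputs.
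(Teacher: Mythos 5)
Your overall plan --- perturbing the frequency argument of \cite{VW2012} by absorbing the axisymmetric corrections $I_1$, $I_2$, $J_1$ and the vorticity term $K$ via Corollary~\ref{Corollary: vort} and the sign of $\widetilde V$ --- matches the paper's, and your handling of items (3)--(5) is in essence the same. The genuine gap is in your treatment of items (1) and (2). You correctly reduce the $K$-estimate in (1) to the bound $\int_0^r s\,J(s)\,ds\le C\,r^2 J(r)$, and you correctly observe that this would follow from (2); but your proposed bootstrap to break the resulting circularity is not the paper's route and, as sketched, does not close. At a degenerate stagnation point $J(r)\to0$ with no a priori rate, so there is no initial interval on which the induction can be anchored (the growth bound $\psi=O(r^{3/2})$ gives only $J(r)=O(1)$, not a lower bound), and each extension step threatens to inflate $C_1$ and hence $\beta$ unless one arranges a fixed-point argument that you do not spell out. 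The paper sidesteps the circularity entirely. It introduces $Y(r):=\int_0^r t^{-3}\int_{\partial B_t(X_0)}\tfrac1x\psi^2\,d\mathcal H^1\,dt$, for which $Y'(r)/r=J(r)$, and shows from the Weiss identity \eqref{Formula: derivatives of Weiss-boundary adjusted energy} together with the cancellations \eqref{Formula: r1}--\eqref{Formula: r3} that
\[
\frac{d}{dr}\left(\frac{Y'(r)}{r}\right)\ \ge\ -\alpha\,\frac{Y(r)}{r},
\]
a Bessel-type differential inequality. By \cite[(6.12)]{VW2012} this forces $r\mapsto Y(r)/r^{1/2}$ to be convex, and since $Y(0+)=0$ convexity gives $\tfrac32\,Y(r)/r\le Y'(r)$, which is precisely the bound $\int_0^r s\,J(s)\,ds\le\tfrac23\,r^2 J(r)$ you needed --- obtained with no appeal to (2). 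Item (1) then follows at once, and (2) is read off from the same differential inequality (namely $J'(r)=\tfrac{d}{dr}(Y'(r)/r)\ge-2\beta r\,J(r)$). Replacing your bootstrap by this Bessel-inequality comparison is the missing ingredient; the rest of the proposal is sound.
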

\begin{proof}
	We deduce from the inequality \eqref{Formula: perturbation of Weiss boundary adjusted energy} that
	\begin{align}\label{Formula: perturbation-1}
		\begin{split}
			&r^{-3}\int_{B_{r}(X_{0})}\left(\frac{1}{x}|\nabla\psi|^{2}-x\psi f(\psi)\right)dX-\frac{3}{2}r^{-4}\int_{\partial B_{r}(X_{0})}\frac{1}{x}\psi^{2}d\mathcal{H}^{1}-\frac{1}{2}r^{-4}\int_{\partial B_{r}(X_{0})}\frac{x-x_{0}}{x^{2}}\psi^{2}d\mathcal{H}^{1}\\
			&\geqslant r^{-3}\int_{B_{r}(X_{0})}x_{0}y^{+}(1-\chi_{\{\psi>0\}})dX+\int_{0}^{r}t^{-4}I_{2}(t)dt-r^{-3}\int_{B_{r}(X_{0})}(x-x_{0})y^{+}\chi_{\{\psi>0\}}dX\\
			&\quad+\int_{0}^{r}t^{-4}I_{1}(t)dt+t^{-5}J_{1}(t)dt+\frac{1}{2}\int_{0}^{r}t^{-5}J_{1}(t)dt-\frac{1}{2}r^{-4}\int_{\partial B_{r}(X_{0})}\frac{x-x_{0}}{x^{2}}\psi^{2}d\mathcal{H}^{1}+\int_{0}^{r}t^{-4}K(t)dt.
		\end{split}
	\end{align}
	Define now 
	\begin{align*}
		U_{1}(r)=\int_{0}^{r}t^{-4}I_{2}(t)dt=\int_{0}^{r}t^{-4}\int_{B_{t}(X_{0})}(x-x_{0})y^{+}\chi_{\{\psi>0\}}dX,
	\end{align*}
	it follows that $U_{1}(r)$ is differentiable and that 
	\begin{align*}
		\int_{0}^{r}t^{-4}I_{2}(t)dt-r^{-3}\int_{B_{r}(X_{0})}(x-x_{0})y^{+}\chi_{\{\psi>0\}}dX=U_{1}(r)-rU_{1}'(r).
	\end{align*}
	Since $U_{1}(0)=0$, we obtain $U_{1}(r)-rU_{1}'(r)=U_{1}(r)-U_{1}(0)-rU_{1}'(r)=r(U_{1}'(\tilde{r})-U_{1}'(r))$, where $\tilde{r}\in(0,r)$. Consider the function $s\mapsto s^{-4}\int_{B_{s}(X_{0})}(x_{0}-x)y^{+}\chi_{\{\psi>0\}}(x)dx$, then for any $\sigma_{1}$, $\sigma_{2}$ and $X_{0}\in S_{\psi}^{s}$, we have
	\begin{align*}
		|U_{1}'(\sigma_{1})-U_{1}'(\sigma_{2})|&=\left|\sigma_{1}^{-4}\int_{B_{\sigma_{1}}(X_{0})}(x_{0}-x)y^{+}\chi_{\{\psi>0\}}dX-\sigma_{2}^{-4}\int_{B_{\sigma_{2}}(X_{0})}(x_{0}-x)y^{+}\chi_{\{\psi>0\}}dX\right|\\
		&=\left|\int_{B_{1}\cap\{\psi(X_{0}+\sigma_{1}X)>0\}}xy^{+}dX-\int_{B_{1}\cap\{\psi(X_{0}+\sigma_{2}X)>0\}}xy^{+}dX\right|\\
		&\leqslant C|\{\psi(X_{0}+\sigma_{1}X)>0\}\setminus\{\psi(X_{0}+\sigma_{2}X)>0\}|.
	\end{align*} 
	We infer from the above calculation that if $|\sigma_{1}-\sigma_{2}|\to0+$, then $|U_{1}'(\sigma_{1})-U_{2}'(\sigma_{2})|\to0+$. Therefore, if $r\in(0,r_{0})$ with $r_{0}$ sufficiently small, one has $|r-\tilde{r}|$ is sufficiently small and this implies that 
	\begin{align}\label{Formula: r2}
		\int_{0}^{r}t^{-4}I_{2}(t)dt-r^{-3}\int_{B_{r}(X_{0})}(x-x_{0})y^{+}\chi_{\{\psi>0\}}dX=0\quad\text{ for all }r\in(0,r_{0}).
	\end{align}
	Similarly, define
	\begin{align*}
		U_{2}(r):=\int_{0}^{r}t^{-5}J_{1}(t)dt=\int_{0}^{r}t^{-5}\int_{\partial B_{t}(X_{0})}\frac{x-x_{0}}{x^{2}}\psi^{2}d\mathcal{H}^{1},
	\end{align*}
	we have for any $\sigma_{1}$, $\sigma_{2}$ and any $X_{0}\in S_{\psi}^{s}$ that
	\begin{align}\label{Formula: U2(s)}
		\begin{split}
			|U_{2}'(\sigma_{1})-U_{2}'(\sigma_{2})|&=\left|\sigma_{1}^{-5}\int_{\partial B_{\sigma_{1}}(X_{0})}\frac{x-x_{0}}{x^{2}}\psi^{2}d\mathcal{H}^{1}-\sigma_{2}^{-5}\int_{\partial B_{\sigma_{2}}(X_{0})}\frac{x-x_{0}}{x^{2}}\psi^{2}d\mathcal{H}^{1}\right|\\
			&=\left|\int_{\partial B_{1}}\frac{1}{(x_{0}+\sigma_{1}x)^{2}}\psi_{\sigma_{1}}^{2}d\mathcal{H}^{1}-\int_{\partial B_{1}}\frac{1}{(x_{0}+\sigma_{2}x)^{2}}\psi_{\sigma_{2}}^{2}d\mathcal{H}^{1}\right|\\
			&\leqslant\left|\int_{\partial B_{1}}\left[\frac{2x_{0}x(\sigma_{2}-\sigma_{1})+(\sigma_{2}^{2}-\sigma_{1}^{2})}{(x_{0}+\sigma_{1}\xi)^{2}(x_{0}+\sigma_{2}\xi)^{2}}\right]\psi_{\sigma_{1}}^{2}d\mathcal{H}^{1}\right|+\left|\int_{\partial B_{1}}\frac{1}{(x_{0}+\sigma_{2}\xi)^{2}}(\psi_{\sigma_{1}}^{2}-\psi_{\sigma_{2}}^{2})d\mathcal{H}^{1}\right|,
		\end{split}
	\end{align}
	where $\psi_{\sigma}(\xi):=\frac{\psi(X_{0}+\sigma\xi)}{\sigma^{3/2}}$ for $\xi\in\partial B_{1}$. A direct computation gives
	\begin{align}\label{Formula: U2(1)}
		\begin{split}
			|\psi_{\sigma_{1}}-\psi_{\sigma_{2}}(x)|&=\left|\int_{\sigma_{1}}^{\sigma_{2}}\frac{d}{d\sigma}\left(\frac{\psi(X_{0}+\sigma\xi)}{\sigma^{3/2}}\right)d\sigma\right|\\
			&=\left|\int_{\sigma_{1}}^{\sigma_{2}}\left(\frac{\nabla\psi(X_{0}+\sigma\xi)\cdot\xi}{\sigma^{3/2}}-\frac{3}{2}\frac{\psi(X_{0}+\sigma\xi)}{\sigma^{5/2}}\right)d\sigma\right|\\
			&=\left|\int_{\sigma_{1}}^{\sigma_{2}}\frac{1}{\sigma}\left(\nabla\psi_{\sigma}\cdot\xi-\frac{3}{2}\psi_{\sigma}\right)d\sigma\right|.
		\end{split}
	\end{align}
	Thanks to Schwardz inequality, we infer from \eqref{Formula: U2(1)} that
	\begin{align}\label{Formula: U2(2)}
		\begin{split}
			\int_{\partial B_{1}}|\psi_{\sigma_{1}}-\psi_{\sigma_{2}}|d\mathcal{H}^{1}&\leqslant n^{2}\omega_{n}^{2}\int_{\partial B_{1}}|\psi_{\sigma_{1}}-\psi_{\sigma_{2}}|^{2}d\mathcal{H}^{n-1}\\
			&\leqslant n^{2}\omega_{n}^{2}\int_{\partial B_{1}}\left(\int_{\sigma_{1}}^{\sigma_{2}}\frac{1}{\sigma}\left|\nabla\psi_{\sigma}\cdot\xi-\frac{3}{2}\psi_{\sigma}\right|\right)^{2}d\mathcal{H}^{n-1}\\
			&\leqslant n^{2}\omega_{n}^{2}\left(\int_{\sigma_{1}}^{\sigma_{2}}\sigma^{-2}d\sigma\right)\int_{\sigma_{1}}^{\sigma_{2}}\int_{\partial B_{1}}\left(\nabla\psi_{\sigma}\cdot\xi-\frac{3}{2}\psi_{\sigma}\right)^{2}d\mathcal{H}^{n-1}\\
			&\leqslant C\left|\frac{1}{\sigma_{1}}-\frac{1}{\sigma_{2}}\right|\int_{\partial B_{1}}\left(\nabla\psi_{\sigma}\cdot\xi-\frac{3}{2}\psi_{\sigma}\right)^{2}d\mathcal{H}^{n-1}.
		\end{split}
	\end{align}
	Since $\psi$ is a continuous function, it follows from \eqref{Formula: U2(s)} and \eqref{Formula: U2(2)} that $|U_{2}'(\sigma_{1})-U_{2}'(\sigma_{2})|\to0+$ provided that $|\sigma_{1}-\sigma_{2}|\to0+$. Observe that
	\begin{align*}
		\int_{0}^{r}t^{-5}J_{1}(t)dt-r^{-4}\int_{\partial B_{r}(X_{0})}\frac{x-x_{0}}{x^{2}}\psi^{2}d\mathcal{H}^{1}=U_{2}(r)-rU_{2}'(r)=r(U_{2}'(\tilde{r})-U_{2}'(r)),
	\end{align*}
	where $0<\tilde{r}<r$. It follows that for all $r\in(0,r_{0})$ with $r_{0}$ sufficiently small,
	\begin{align}\label{Formula: r3}
		\int_{0}^{r}t^{-5}J_{1}(t)dt-r^{-4}\int_{\partial B_{r}(X_{0})}\frac{x-x_{0}}{x^{2}}\psi^{2}d\mathcal{H}^{1}=0\quad\text{ for all }r\in(0,r_{0}).
	\end{align}
	Let now $\varepsilon\in(0,t)$ be sufficiently small such that $\varepsilon+t\in(0,r)$. Define $t_{\varepsilon,s}:=\varepsilon+st\in(\varepsilon,\varepsilon+t)$ for some $s\in(0,1)$ and let $\varphi(t):=\int_{B_{t}}\frac{x-x_{0}}{x^{2}}\psi^{2}dX$. Then $\varphi(t)$ is an absolute continuous function on $(0,1)$, and we obtain
	\begin{align*}
		t^{-5}J_{1}(t)&=t^{-5}\int_{\partial B_{t}(X_{0})}\frac{x-x_{0}}{x^{2}}\psi^{2}d\mathcal{H}^{1}\\
		&=t^{-5}\int_{0}^{1}\int_{\partial B_{t}(X_{0})}\frac{x-x_{0}}{x^{2}}\psi^{2}d\mathcal{H}^{1}ds\\
		&=t^{-5}\int_{0}^{1}\int_{\partial B_{t_{\varepsilon,s}}(X_{0})}\frac{x-x_{0}}{x^{2}}\psi^{2}d\mathcal{H}^{1} ds+O(\varepsilon)\\
		&=t^{-6}(\varphi(t+\varepsilon)-\varphi(\varepsilon))+O(\varepsilon),
	\end{align*}
	where we have used the fact that $t^{-1}(\varphi(t+\varepsilon)-\varphi(\varepsilon))=\int_{0}^{1}\varphi'(t_{\varepsilon,s})ds$. Passing to the limit as $\varepsilon\to0+$ and writing $X=X_{0}+t\xi\in\partial B_{t}(X_{0})$ for $\xi\in\partial B_{1}$ yields
	\begin{align*}
		t^{-5}J_{1}(t)&=t^{-6}\int_{B_{t}(X_{0})}\frac{x-x_{0}}{x^{2}}\psi^{2}dX\\
		&=t^{-4}\int_{B_{1}}\frac{t\xi}{(x_{0}+t\xi)^{2}}(\psi(X_{0}+t\xi)-\psi(X_{0}))^{2}dX(\xi)\\
		&=t^{-3}\int_{B_{1}}\frac{\xi}{(x_{0}+t\xi)^{2}}\1\int_{0}^{t}\nabla\psi(X_{0}+s\xi)\cdot\xi ds\2^{2}dX(\xi),
	\end{align*}
	where we have also used the fact that $\psi(X_{0})=0$ in the second identity since $X_{0}\in S_{\psi}^{s}$. On the other hand, notice that
	\begin{align*}
		t^{-4}I_{1}(t)=t^{-3}\int_{B_{1}}-\frac{\xi}{(x_{0}+t\xi)^{2}}\left(\int_{0}^{t}\nabla\psi(X_{0}+s_{0}\xi)\cdot\xi ds\right)^{2}dX(\xi),
	\end{align*}
	for some $s_{0}\in(0,t)$. Since $t\in(0,r)$ and $r\in(0,r_{0})$ is sufficiently small, we obtain
	\begin{align}\label{Formula: r1}
		t^{-4}I_{1}(t)+t^{-5}J_{1}(t)=0\quad\text{ for all }\quad t\in(0,r).
	\end{align}
    It follows from \eqref{Formula: perturbation-1}, \eqref{Formula: r2}, \eqref{Formula: r3}, \eqref{Formula: r1} and $r^{-3}\int_{B_{r}(X_{0})}x_{0}y^{+}(1-\chi_{\{\psi>0\}})dX\geqslant0$ that for all $r\in(0,r_{0})$ with $r_{0}$ sufficiently small,
    \begin{align}\label{Formula: perturbation-2}
    	\begin{split}
    		&r^{-3}\int_{B_{r}(X_{0})}\left(\frac{1}{x}|\nabla\psi|^{2}-x\psi f(\psi)\right)dX-\frac{3}{2}r^{-4}\int_{\partial B_{r}(X_{0})}\frac{1}{x}\psi^{2}d\mathcal{H}^{1}-\frac{1}{2}r^{-4}\int_{\partial B_{r}(X_{0})}\frac{x-x_{0}}{x^{2}}\psi^{2}d\mathcal{H}^{1}\\
    		&\geqslant\int_{0}^{r}t^{-4}K(t)dt\geqslant-C_{0}\int_{0}^{r}t^{-3}\int_{\partial B_{t}(X_{0})}\frac{1}{x}\psi^{2}d\mathcal{H}^{1}dt.
    	\end{split}
    \end{align}
    Here we applied \eqref{Formula: vort(3)} in the last inequality.
    
    Recalling $\tilde{I}(r)$ defined in \eqref{Formula: perturbation of I(r)}, then for $r$ sufficiently small, 
    \begin{align}\label{Formula: derivation of Y(r)}
    	\begin{alignedat}{5}
    		&\tilde{I}(r)-\frac{3}{2}J(r)-\int_{B_{1}}x_{0}y^{+}dX\\
    		&=I(r)-\frac{3}{2}J(r)-\int_{0}^{r}\sum_{i=1}^{2}t^{-4}I_{i}(t)dt-\frac{3}{2}\int_{0}^{r}t^{-5}J_{1}(t)dt-\int_{B_{1}}x_{0}y^{+}dX\\
    		&=\F(r)-\F(0+)-\int_{0}^{r}\sum_{i=1}^{2}t^{-4}I_{i}(t)dt-\frac{3}{2}\int_{0}^{r}t^{-5}J_{1}(t)dt\\
    		&=\int_{0}^{r}\left[\F'(t)-\sum_{i=1}^{2}t^{-4}I_{i}(t)-\frac{3}{2}t^{-5}J_{1}(t)\right]dt\\
    		&=\int_{0}^{r}2t^{-3}\int_{\partial B_{t}(X_{0})}\frac{1}{x}\left(\nabla\psi\cdot\nu-\frac{3}{2}\frac{\psi}{r}\right)^{2}d\mathcal{H}^{1}dt+\int_{0}^{r}t^{-4}K(t)dt\\
    		&\geqslant-C_{0}\int_{0}^{r}t^{-3}\int_{\partial B_{t}(X_{0})}\frac{1}{x}\psi^{2}d\mathcal{H}^{1}dt,
    	\end{alignedat}
    \end{align}
    where we have used the definition of $\Phi(r)$ \eqref{Formula: Weiss-boundary adjusted energy} in the second identity, the fact on $\Phi'(r)$ \eqref{Formula: derivatives of Weiss-boundary adjusted energy} in the fourth identity and the estimate \eqref{Formula: vort(3)} in the last inequality. Let $Y(r):(0,r_{0})\to\mathbb{R}$ be defined by
    \begin{align}\label{Formula: Y(r)}
    	Y(r)=\int_{0}^{r}t^{-3}\int_{\partial B_{t}(X_{0})}\frac{1}{x}\psi^{2}d\mathcal{H}^{1}dt.
    \end{align}
    It follows from \eqref{Formula: Energy identity}, \eqref{Formula: J(r)} and \eqref{Formula: J'(r)} that
    \begin{align}\label{Formula: Y(r)(0)}
    	\begin{split}
    		\frac{d}{dr}\left(\frac{Y'(r)}{r}\right)&=J'(r)\\
    		&=\frac{2}{r}\left(r^{-3}\int_{B_{r}(X_{0})}\left(\frac{1}{x}|\nabla\psi|^{2}-x\psi f(\psi)\right)dX-\frac{3}{2}r^{-4}\int_{\partial B_{r}(X_{0})}\frac{1}{x}\psi^{2}d\mathcal{H}^{1}-\frac{1}{2}r^{-4}J_{1}(r)\right),
    	\end{split}
    \end{align}
    Thanks to \eqref{Formula: perturbation-2}, we obtain
    \begin{align}\label{Formula: Y'(r)/r(1)}
    	\frac{d}{dr}\left(\frac{Y'(r)}{r}\right)\geqslant-\alpha\frac{Y(r)}{r}.
    \end{align}
    Here $\alpha<+\infty$ is a positive constant. As an application of the \emph{Bessel type differential inequality} \cite[(6.12)]{VW2012}, we have that the function $r\mapsto Y(r)/r^{1/2}$ is a convex function on $(0,r_{0})$. A similar argument as in the proof of \cite[Theorem 6.12]{VW2012} yields that
    \begin{align*}
    	\frac{3}{2}\frac{Y(r)}{r}\leqslant Y'(r)\quad\text{ for all }\quad r\in(0,r_{0}).
    \end{align*}
    This together with \eqref{Formula: derivation of Y(r)} gives that
    \begin{align}\label{Formula: Y(r)(1)}
    	\tilde{I}(r)-\frac{3}{2}J(r)-\int_{B_{1}}x_{0}y^{+}dX\geqslant-\frac{2}{3}C_{0}r^{-2}\int_{\partial B_{r}(X_{0})}\frac{1}{x}\psi^{2}d\mathcal{H}^{1},
    \end{align}
    which proves (1).
    
    (2). Recalling \eqref{Formula: J'(r)}, \eqref{Formula: Y'(r)/r(1)}, and \eqref{Formula: Y(r)(1)}, we have
    \begin{align*}
    	J'(r)=\frac{d}{dr}\left(\frac{Y'(r)}{r}\right)\geqslant2r^{-1}\cdot\left(-\frac{2}{3}C_{0}r^{-2}\int_{\partial B_{r}(X_{0})}\frac{1}{x}\psi^{2}d\mathcal{H}^{1}\right)\geqslant-2\beta rJ(r),
    \end{align*}
    for some $\beta>0$. This proves (2).
    
    (3). Using \eqref{Formula: vort(3)} and the frequency formula \eqref{Formula: H'(r)} obtained previously, we get that for a.e. $r\in(0,r_{0})$.
    \begin{align*}
    	H'(r)\geqslant\frac{2}{r}V^{2}(r)-2C_{1}r|V(r)|-C_{1}r|Z(r)|-C_{0}r.
    \end{align*}
    Recalling \eqref{Formula: Z(r)}, we have
    \begin{align*}
    	Z(r)=\frac{\displaystyle\int_{\partial B_{r}(X_{0})}\frac{x-x_{0}}{x^{2}}\psi^{2}d\mathcal{H}^{1}}{\displaystyle\int_{\partial B_{r}(X_{0})}\frac{1}{x}\psi^{2}d\mathcal{H}^{1}}=\frac{\displaystyle\int_{\partial B_{r}(X_{0})}\frac{1}{x}\psi^{2}\cdot\left(\frac{x-x_{0}}{x}\right)d\mathcal{H}^{1}}{\displaystyle\int_{\partial B_{r}(X_{0})}\frac{1}{x}\psi^{2}d\mathcal{H}^{1}},
    \end{align*}
    Since $X_{0}\in S_{\psi}^{s}$, we have $\frac{x-x_{0}}{x}=O(r)$ and thus
    \begin{align}\label{Formula: order of Z(r)}
    	|Z(r)|\leqslant C_{2}r\quad\text{ for all }\quad r\in(0,r_{0}).
    \end{align}
    Moreover,
    \begin{align*}
    	2C_{1}r|V(r)|\leqslant\frac{1}{r}V^{2}(r)+C_{1}^{2}r^{3},
    \end{align*}
    we get for a.e. $r\in(0,r_{0})$
    \begin{align}\label{Formula: lower bound for H'(r)}
    	H'(r)\geqslant\frac{1}{r}V^{2}(r)-C_{1}^{2}r^{3}-C_{1}C_{2}r^{2}-C_{0}r.
    \end{align}
    Since $r\mapsto H(r)$ is bounded below as $r\to0+$, $r\mapsto H(r)$ must be bounded below as $r\to0+$, this implies that $r\mapsto\tfrac{1}{r}V^{2}(r)\in L^{1}(0,r_{0})$. 
    
    (4). The existence of the limit $\lim_{r\to0+}H(r)$ follows directly from \eqref{Formula: lower bound for H'(r)} and then $H(0+)\geqslant\frac{3}{2}$.
    
    (5). Consider now \eqref{Formula: H'(r)(1)}, a similar argument as in the proof of part (3) gives
    \begin{align}\label{Formula: H'(r)(2)}
    	\begin{split}
    		&H'(r)-\frac{2}{r}\int\limits_{\partial B_{r}(X_{0})}\frac{1}{x}\left[\frac{r(\nabla\psi\cdot\nu)}{(\int_{\partial B_{r}(X_{0})}\frac{1}{x}\psi^{2}d\mathcal{H}^{1})^{1/2}}-D(r)\frac{\psi}{(\int_{\partial B_{r}(X_{0})}\frac{1}{x}\psi^{2}d\mathcal{H}^{1})^{1/2}}\right]^{2}d\mathcal{H}^{1}\\
    		&\geqslant-2C_{1}r|V(r)|-C_{1}r|Z(r)|-C_{0}r\geqslant-\frac{1}{r}V^{2}(r)-C_{1}^{2}r^{3}-C_{1}C_{2}r^{2}-C_{0}r,
    	\end{split}
    \end{align}
    which, together with (3), gives (5).
\end{proof}
To conclude this section, we prove the following results, which states that $V(r)$ can be viewed as a non-negative part $\tilde{V}(r)$ plus a perturbation term.
\begin{lemma}\label{Lemma: V(r)}
	Let $\psi$ be a variational solution of the problem \eqref{Formula: model problem1} and let $V(r)$ be given as in \eqref{Formula: V(r)}. Then for every $X_{0}\in\Sigma_{\psi}$ there exists some sufficiently small $r_{0}\in(0,\delta)$ so that
	\begin{align}\label{Formula: tildeV(1)}
		V(r)=\widetilde{V}(r)+\frac{1}{2}Z(r)\quad\text{ for all }\quad r\in(0,r_{0}).
	\end{align}
	Here,
	\begin{align}\label{Formula: tildeV(2)}
		\widetilde{V}(r):=\frac{\displaystyle r\int_{B_{r}(X_{0})}x_{0}y^{+}(1-\chi_{\{\psi>0\}})\:dX}{\displaystyle\int_{\partial B_{r}(X_{0})}\frac{1}{x}\psi^{2}\:d\mathcal{H}^{1}}\geqslant0.
	\end{align}
\end{lemma}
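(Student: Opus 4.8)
The plan is to obtain \eqref{Formula: tildeV(1)} by a purely algebraic rearrangement of the numerator of $V(r)$, exploiting the three pointwise identities \eqref{Formula: r1}, \eqref{Formula: r2} and \eqref{Formula: r3} that were already derived (for $r$ in some interval $(0,r_{0})$) inside the proof of Proposition \ref{Proposition: fre(2)}. No new analytic input is needed: those identities --- established there via the averaging trick with the auxiliary function $\varphi$ and the Bessel-type differential inequality --- are exactly what makes the ``axisymmetric correction terms'' in \eqref{Formula: V(r)} collapse. So the substance of the lemma is already in place; here it is just a matter of keeping track of powers of $r$.

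Concretely, I would first split the density term in the numerator of \eqref{Formula: V(r)} as
\[
y^{+}(x_{0}-x\chi_{\{\psi>0\}})=x_{0}y^{+}(1-\chi_{\{\psi>0\}})+(x_{0}-x)y^{+}\chi_{\{\psi>0\}},
\]
so that the numerator of $V(r)$ becomes
\[
r\!\int_{B_{r}(X_{0})}\!x_{0}y^{+}(1-\chi_{\{\psi>0\}})\,dX+r\!\int_{B_{r}(X_{0})}\!(x_{0}-x)y^{+}\chi_{\{\psi>0\}}\,dX+r^{4}\!\int_{0}^{r}\!t^{-4}\bigl(I_{1}(t)+I_{2}(t)\bigr)\,dt+\tfrac{3}{2}r^{4}\!\int_{0}^{r}\!t^{-5}J_{1}(t)\,dt.
\]
Then I invoke \eqref{Formula: r1} in the form $\int_{0}^{r}t^{-4}I_{1}(t)\,dt=-\int_{0}^{r}t^{-5}J_{1}(t)\,dt$, which turns the $I_{1}$-term together with the $\tfrac{3}{2}J_{1}$-term into $\tfrac{1}{2}r^{4}\int_{0}^{r}t^{-5}J_{1}(t)\,dt$; by \eqref{Formula: r3} this equals $\tfrac{1}{2}J_{1}(r)$. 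Next I use \eqref{Formula: r2} in the form $r^{4}\int_{0}^{r}t^{-4}I_{2}(t)\,dt=r\int_{B_{r}(X_{0})}(x-x_{0})y^{+}\chi_{\{\psi>0\}}\,dX$, which cancels exactly the second term in the display above. Hence the numerator of $V(r)$ reduces to $r\int_{B_{r}(X_{0})}x_{0}y^{+}(1-\chi_{\{\psi>0\}})\,dX+\tfrac{1}{2}J_{1}(r)$.

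Dividing by $\int_{\partial B_{r}(X_{0})}\frac{1}{x}\psi^{2}\,d\mathcal{H}^{1}$ and recalling that $J_{1}(r)=\int_{\partial B_{r}(X_{0})}\frac{x-x_{0}}{x^{2}}\psi^{2}\,d\mathcal{H}^{1}$ together with the definition \eqref{Formula: Z(r)} of $Z(r)$, this yields exactly $V(r)=\widetilde{V}(r)+\tfrac{1}{2}Z(r)$ with $\widetilde{V}$ as in \eqref{Formula: tildeV(2)}. Finally, $\widetilde{V}(r)\geqslant0$ is immediate: $x_{0}>0$ since $X_{0}\in S_{\psi}^{s}$, the integrand $x_{0}y^{+}(1-\chi_{\{\psi>0\}})$ is nonnegative, and for $r<\delta$ one has $x>0$ on $B_{r}(X_{0})$, so the denominator $\int_{\partial B_{r}(X_{0})}\frac{1}{x}\psi^{2}\,d\mathcal{H}^{1}$ is positive. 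The only point requiring mild care is to choose $r_{0}\in(0,\delta)$ small enough that \eqref{Formula: r1}, \eqref{Formula: r2} and \eqref{Formula: r3} all hold simultaneously, which is automatic since each is valid on an interval of that form; thus there is no genuine obstacle here --- the hard work was already carried out in Proposition \ref{Proposition: fre(2)}.
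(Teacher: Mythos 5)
Your proof is correct and takes essentially the same route as the paper: decompose $y^{+}(x_{0}-x\chi_{\{\psi>0\}})=x_{0}y^{+}(1-\chi_{\{\psi>0\}})+(x_{0}-x)y^{+}\chi_{\{\psi>0\}}$, then invoke the identities \eqref{Formula: r1}, \eqref{Formula: r2}, \eqref{Formula: r3} (established in the proof of Proposition~\ref{Proposition: fre(2)}) to collapse the axisymmetric correction terms in the numerator of $V(r)$ down to $\tfrac{1}{2}J_{1}(r)$, and finish by recognizing $Z(r)=J_{1}(r)/\int_{\partial B_{r}(X_{0})}\tfrac{1}{x}\psi^{2}\,d\mathcal{H}^{1}$. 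The nonnegativity observation for $\widetilde{V}$ also matches the paper's.
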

\begin{proof}
	It follows from the definition of $V(r)$ \eqref{Formula: V(r)} that
	\begin{align*}
		\left(V(r)-\widetilde{V}(r)\right)\int_{\partial B_{r}(X_{0})}\frac{1}{x}\psi^{2}d\mathcal{H}^{1}=e(r).
	\end{align*}
	It follows from \eqref{Formula: r2}, \eqref{Formula: r3} and \eqref{Formula: r1} that $r^{-4}\left(e(r)-\frac{1}{2}\int_{\partial B_{r}(X_{0})}\frac{x-x_{0}}{x^{2}}\psi^{2}d\mathcal{H}^{1}\right)=0$ for all $r\in(0,r_{0})$ with $r_{0}$ sufficiently small. Thus, we obtain
	\begin{align*}
		V(r)-\widetilde{V}(r)=\frac{1}{2}\frac{\displaystyle\int_{\partial B_{r}(X_{0})}\frac{x-x_{0}}{x^{2}}\psi^{2}d\mathcal{H}^{1}}{\displaystyle\int_{\partial B_{r}(X_{0})}\frac{1}{x}\psi^{2}d\mathcal{H}^{1}}\quad\text{ for all }\quad r\in(0,r_{0}).
	\end{align*}
	Then the desired result follows from the definition of $Z(r)$ (recalling \eqref{Formula: Z(r)}).
\end{proof}
\section{Blow-up limits}\label{Section: blow-up limits}
In this section, we consider the blow-up sequence $\{\phi_{m}\}$ defined by 
\begin{align}\label{Formula: vm}
	\phi_{m}(X):=\frac{\psi(X_{0}+r_{m}X)}{\sqrt{r_{m}^{-1}\int\limits_{\partial B_{r_{m}}(X_{0})}\tfrac{1}{x}\psi^{2}\:d\mathcal{H}^{1}}},
\end{align}
where $r_{m}\to0+$ as $m\to\infty$. With the aid of the frequency formula derived in the previous section, we are able to passing to the limit as $m\rightarrow\8$. One particular difficulty that arises here is that it is not obvious whether limits of $\phi_{m}$ are again variational solutions. To overcome this, we would like to employ a \emph{concentration compactness} result for axisymmetric waves introduced in \cite[Theorem 6.1]{VW2014}, making it possible to pass to the limit in the domain variation formula for $\phi_{m}$ (presented in Proposition \ref{Proposition: strong convergence}). We begin with the following Lemma, which indicates that along a subsequence, $v_{m}$ converges weakly to a homogeneous function of degree $H_{X_{0},\psi}(0+)$.
\begin{lemma}\label{Lemma: formula for phim}
	Let $\psi$ be a variational solution and let $X_{0}\in\Sigma_{\psi}$, and let $\phi_{m}$ be the sequence defined in \eqref{Formula: vm}. Then for every $0<\varrho<\sigma<1$,
	\begin{align}\label{Formula: vm(1)}
		\int\limits_{B_{\sigma}\setminus B_{\varrho}}\frac{1}{x_{0}}|X|^{-5}[\nabla\phi_{m}(\tilde{X})\cdot\tilde{X}-H_{X_{0},\psi}(0+)\phi_{m}]^{2}\:dX\to0\quad\text{ as }m\to\infty.
	\end{align}
\end{lemma}
\begin{proof}
	Recalling the inequality \eqref{Formula: H'(r)(2)} and integrating it from $(r_{m}\varrho,r_{m}\sigma)$ with respect to $r$ for every $0<\varrho<\sigma$ gives
	\begin{align}\label{Formula: vm(2)}
		&\int\limits_{r_{m}\varrho}^{r_{m}\sigma}\frac{2}{r}\frac{\int\limits_{\partial B_{r}(X_{0})}\tfrac{1}{x}[r(\nabla\psi\cdot\nu)-H(r)\psi]^{2}\:d\mathcal{H}^{1}}{\int\limits_{\partial B_{r}(X_{0})}\tfrac{1}{x}\psi^{2}\:d\mathcal{H}^{1}}dr\\
		&\leqslant\int_{r_{m}\varrho}^{r_{m}\sigma}H'(r)\:dr+\int_{r_{m}\varrho}^{r_{m}\sigma}\left(\frac{1}{r}V^{2}(r)+C_{1}^{2}r^{3}+C_{1}C_{2}r^{2}+C_{0}r\right)\:dr\nonumber.
	\end{align}
	Changing variables $\tilde{X}:=\frac{X-X_{0}}{r_{m}}$ and set $\tilde{r}:=\frac{r}{r_{m}}$ for simplicity, we rewrite the left-hand side of the inequality \eqref{Formula: vm(2)} as 
	\begin{align*}
		&\int\limits_{\varrho}^{\sigma}\frac{2}{\tilde{r}r_{m}}\frac{\int\limits_{\partial B_{r/r_{m}}}\tfrac{1}{x_{0}+r_{m}\tilde{x}}[\tilde{r}r_{m}\nabla\psi(X_{0}+r_{m}\tilde{X})\cdot\frac{\tilde{X}}{\tilde{r}}-H(\tilde{r}r_{m})\psi(X_{0}+r_{m}\tilde{X})]^{2}r_{m}\:d\mathcal{H}^{1}}{\int\limits_{\partial B_{r/r_{m}}}\tfrac{1}{x_{0}+r_{m}\tilde{x}}\psi^{2}(X_{0}+r_{m}\tilde{X})r_{m}\:d\mathcal{H}^{1}}r_{m}\:d\tilde{r}\\
		&=\int\limits_{\varrho}^{\sigma}\frac{2}{\tilde{r}}\frac{\int\limits_{\partial B_{\tilde{r}}}\tfrac{1}{x_{0}+r_{m}\tilde{x}}[r_{m}\nabla\psi(X_{0}+r_{m}\tilde{X})\cdot\tilde{X}-H(\tilde{r}r_{m})\psi(X_{0}+r_{m}\tilde{X})]^{2}\:d\mathcal{H}^{1}}{\int\limits_{\partial B_{\tilde{r}}}\tfrac{1}{x_{0}+r_{m}\tilde{x}}\psi^{2}(X_{0}+r_{m}\tilde{X})\:d\mathcal{H}^{1}}\:d\tilde{r}
	\end{align*}
	It follows from \eqref{Formula: vm} that
	\begin{align*}
		r_{m}\nabla\psi(X_{0}+r_{m}\tilde{X})=\nabla\phi_{m}(\tilde{X})\sqrt{r_{m}^{-1}\int_{\partial B_{r_{m}}(X_{0})}\frac{1}{x}\psi^{2}\:d\mathcal{H}^{1}}.
	\end{align*}
	Introducing this into the previous integral gives
	\begin{align*}
		&\int\limits_{\varrho}^{\sigma}\frac{2}{\tilde{r}}\frac{\int\limits_{\partial B_{\tilde{r}}}\tfrac{1}{x_{0}+r_{m}\tilde{x}}\left[\sqrt{r_{m}^{-1}\int\limits_{\partial B_{r_{m}}(X_{0})}\tfrac{1}{x}\psi^{2}\:d\mathcal{H}^{1}}\nabla \phi_{m}(\tilde{X})\cdot\tilde{X}-H(\tilde{r}r_{m})\psi(X_{0}+r_{m}\tilde{X})\right]^{2}\:d\mathcal{H}^{1}}{\int\limits_{\partial B_{\tilde{r}}}\tfrac{1}{x_{0}+r_{m}\tilde{x}}\psi^{2}(X_{0}+r_{m}\tilde{X})\:d\mathcal{H}^{1}}\:d\tilde{r}\\
		&=\int\limits_{\varrho}^{\sigma}\frac{2}{\tilde{r}}\frac{r_{m}^{-1}\int\limits_{\partial B_{r_{m}}(X_{0})}\tfrac{1}{x}\psi^{2}\:d\mathcal{H}^{1}\int\limits_{\partial B_{\tilde{r}}}\tfrac{1}{x_{0}+r_{m}\tilde{x}}\left[\nabla\phi_{m}(\tilde{X})\cdot\tilde{X}-H(r_{m}\tilde{r})\tfrac{\psi(X_{0}+r_{m}\tilde{X})}{\sqrt{r_{m}^{-1}\int\limits_{\partial B_{r_{m}}(X_{0})}\tfrac{1}{x}\psi^{2}\:d\mathcal{H}^{1}}}\right]^{2}\:d\mathcal{H}^{1}}{\int\limits_{\partial B_{\tilde{r}}}\tfrac{1}{x_{0}+r_{m}\tilde{x}}\psi^{2}(X_{0}+r_{m}\tilde{X})\:d\mathcal{H}^{1}}\:d\tilde{r}\\
		&=\int\limits_{\varrho}^{\sigma}\frac{2}{\tilde{r}}\int\limits_{\partial B_{\tilde{r}}}\frac{1}{x_{0}+r_{m}\tilde{x}}[\nabla\phi_{m}(\tilde{X})\cdot\tilde{X}-H(\tilde{r}r_{m})\phi_{m}]^{2}\frac{r_{m}^{-1}\int_{\partial B_{r_{m}}(X_{0})}\frac{1}{x}\psi^{2}d\mathcal{H}^{1}}{\int_{\partial B_{\tilde{r}}}\frac{1}{x_{0}+r_{m}\tilde{x}}\psi^{2}(X_{0}+r_{m}\tilde{X})d\mathcal{H}^{1}}d\mathcal{H}^{1}d\tilde{r}\\
		&=\int\limits_{\varrho}^{\sigma}\frac{2}{\tilde{r}}\int\limits_{\partial B_{\tilde{r}}}\frac{1}{x_{0}+r_{m}\tilde{x}}\frac{[\nabla\phi_{m}(\tilde{X})\cdot\tilde{X}-H(\tilde{r}r_{m})\phi_{m}]^{2}}{\int_{\partial B_{\tilde{r}}}\frac{1}{x_{0}+r_{m}\tilde{x}}\phi_{m}^{2}(\tilde{X})d\mathcal{H}^{1}}d\tilde{r}.
	\end{align*}
	This together with \eqref{Formula: vm(2)} gives
	\begin{align*}
		&\int\limits_{\varrho}^{\sigma}\frac{2}{\tilde{r}}\int\limits_{\partial B_{\tilde{r}}}\frac{1}{x_{0}+r_{m}\tilde{x}}\frac{[\nabla\phi_{m}(\tilde{X})\cdot\tilde{X}-H(\tilde{r}r_{m})\phi_{m}]^{2}}{\int_{\partial B_{\tilde{r}}}\frac{1}{x_{0}+r_{m}x}\phi_{m}^{2}(\tilde{X})d\mathcal{H}^{1}}d\tilde{r}\\
		&\leqslant H(r_{m}\sigma)-H(r_{m}\varrho)+\int_{r_{m}\varrho}^{r_{m}\sigma}\left(\frac{1}{r}V^{2}(r)+C_{1}^{2}r^{3}+C_{1}C_{2}r^{2}+C_{0}r\right)\:dr\to0,
	\end{align*}
	as $m\to\infty$. Now note that for every $\tilde{r}\in(\varrho,\sigma)\subset(0,1)$ and all $m$ as above, 
	\begin{align*}
		\int\limits_{\partial B_{\tilde{r}}}\frac{1}{x_{0}+r_{m}\tilde{x}}\phi_{m}^{2}\:d\mathcal{H}^{1}&=\frac{\int\limits_{\partial B_{\tilde{r}}}\tfrac{1}{x_{0}+r_{m}\tilde{x}}\psi^{2}(X_{0}+r_{m}\tilde{X})\:d\mathcal{H}^{1}}{r_{m}^{-1}\int\limits_{\partial B_{r_{m}}(X_{0})}\tfrac{1}{x}\psi^{2}\:d\mathcal{H}^{1}}\\
		&=\frac{r_{m}^{-1}\int\limits_{\partial B_{\tilde{r}r_{m}}(X_{0})}\tfrac{1}{x}\psi^{2}\:d\mathcal{H}^{1}}{r_{m}^{-1}\int\limits_{\partial B_{r_{m}}(X_{0})}\tfrac{1}{x}\psi^{2}\:d\mathcal{H}^{1}}\\
		&=\frac{(r_{m}\tilde{r})^{4}J(r_{m}\tilde{r})}{r_{m}^{4}J(r_{m})}\leqslant\tilde{r}^{4}e^{\beta r_{m}^{2}(1-r^{2})}\to\tilde{r}^{4},
	\end{align*}
	as $m\to\infty$, where we have used Proposition  \ref{Proposition: fre(2)} (ii) and $r_{m}\tilde{r}\leqslant r_{m}$ in the last inequality. Therefore,
	\begin{align*}
		\lim_{m\to\infty}\int\limits_{\varrho}^{\sigma}\frac{2}{\tilde{r}^{5}}\int\limits_{\partial B_{\tilde{r}}}\frac{1}{x_{0}+r_{m}\tilde{x}}\left[\nabla\phi_{m}(\tilde{X})\cdot\tilde{X}-H(\tilde{r}r_{m})\phi_{m}\right]^{2}\:d\mathcal{H}^{1}d\tilde{r}=0.
	\end{align*}
	Thanks to the identity
	\begin{align*}
		\tilde{r}^{-5}\int\limits_{\partial B_{\tilde{r}}}\frac{1}{x_{0}+r_{m}\tilde{x}}[\nabla\phi_{m}(\tilde{X})\cdot\tilde{X}-H(\tilde{r}r_{m})\phi_{m}]^{2}\:d\mathcal{H}^{1}=\frac{d}{d\tilde{r}}\int_{B_{\tilde{r}}}\frac{|\tilde{X}|^{-5}}{x_{0}+r_{m}\tilde{x}}\left[\nabla\phi_{m}(\tilde{X})\cdot\tilde{X}-H(\tilde{r}r_{m})\phi_{m}\right]^{2}\:d\tilde{X}.
	\end{align*}
	We obtain for every $0<\varrho<\sigma<1$,
	\begin{align*}
		\lim_{m\to\infty}\int_{B_{\sigma}\setminus B_{\varrho}}\frac{1}{x_{0}+r_{m}\tilde{x}}|\tilde{X}|^{-5}[\nabla\phi_{m}(\tilde{X})\cdot\tilde{X}-H(\tilde{r}r_{m})\phi_{m}]^{2}\:d\tilde{X}=0.
	\end{align*}
	As $H(\tilde{r}r_{m})\to H(0+)$ uniformly in $|\tilde{X}|$, we obtain the desired result.
\end{proof}
\begin{remark} 
	Let $\phi_{m}$ be defined as in \eqref{Formula: vm} and let $X_{0}\in\Sigma_{\psi}$. Then a direct calculation yields
	\begin{align*}
		\int_{B_{r_{m}}(X_{0})}\frac{1}{x}|\nabla\psi|^{2}dX&=\int_{B_{1}}\frac{1}{x_{0}+r_{m}\tilde{x}}|\nabla\psi(X_{0}+r_{m}\tilde{X})|^{2}r_{m}^{2}d\tilde{X}\\
		&=r_{m}^{-1}\int_{\partial B_{r_{m}}(X_{0})}\frac{1}{x}\psi^{2}d\mathcal{H}^{1}\int_{B_{1}}\frac{1}{x_{0}+r_{m}\tilde{x}}|\nabla\phi_{m}|^{2}d\tilde{X}.
	\end{align*}
	This implies that
	\begin{align*}
		\int_{B_{1}}\frac{1}{x_{0}+r_{m}\tilde{x}}|\nabla\phi_{m}|^{2}dX=\frac{\displaystyle r_{m}\int_{B_{r_{m}}(X_{0})}\frac{1}{x}|\nabla\psi|^{2}dX}{\displaystyle\int_{\partial B_{r_{m}}(X_{0})}\frac{1}{x}\psi^{2}d\mathcal{H}^{1}}.
	\end{align*}
	It follows from \eqref{Formula: D(r)} that
	\begin{align}\label{Formula: D(rm)}
		\begin{split}
			\left|D(r_{m})-\int_{B_{1}}\frac{1}{x_{0}+r_{m}\tilde{x}}|\nabla\phi_{m}(\tilde{X})|^{2}d\tilde{X}\right|&=\left|\frac{r_{m}\displaystyle\int_{B_{r_{m}}(X_{0})}x\psi f(\psi)dX}{\displaystyle\int_{\partial B_{r_{m}}(X_{0})}\frac{1}{x}\psi^{2}d\mathcal{H}^{1}}\right|\\
			&\leqslant C(X_{0})\frac{\displaystyle r_{m}\int_{B_{r_{m}}(X_{0})}\frac{1}{x}\psi^{2}dX}{\displaystyle\int_{\partial B_{r_{m}}(X_{0})}\frac{1}{x}\psi^{2}d\mathcal{H}^{1}}\\
			&\leqslant C(X_{0})r_{m}^{2},
		\end{split}
	\end{align}
	for every $r$ sufficiently small, where we have used \eqref{Formula: vort(2)} in the last inequality. This implies that $\phi_{m}$ is bounded by $D(r_{m})$ in $W^{1,2}$-norms, and in what follows next, we will prove that $\lim_{m\to\infty}D(r_{m})=H(0+)$ and thus $\{\phi_{m}\}$ is a sequence bounded in $W^{1,2}(B_{1})$.
\end{remark}
We are now able to prove the following result.
\begin{proposition}\label{Proposition: vm}
	Let $\psi$ be a variational solution of the problem \eqref{Formula: model problem1}, and let $X_{0}\in\Sigma_{\psi}$. Then
	\begin{enumerate}
		\item There exist $\lim_{r\to0+}V(r)=0$ and $\lim_{r\to0+}D(r)=H(0+)$.
		\item Let $\phi_{m}$ be defined in \eqref{Formula: vm} for any $r_{m}\to0+$ as $m\to\infty$, then the sequence is bounded in $W^{1,2}(B_{1})$.
		\item For any sequence $r_{m}\to0+$ as $m\to\infty$ such that the sequence $\phi_{m}$ converges in $W^{1,2}(B_{1})$ to a blow-up limit $\phi_{0}$, then the function $\phi_{0}$ is a  homogeneous function of degree $H_{X_{0},\psi}(0+)$ in $B_{1}$, and satisfies
		\begin{align*}
			\phi_{0}\geqslant0\quad\text{ in }\quad B_{1},\qquad\phi_{0}\equiv0\quad\text{ in }\quad B_{1}\cap\{y\leqslant0\},\qquad\int_{\partial B_{1}}\frac{1}{x_{0}}\phi_{0}^{2}\:d\mathcal{H}^{1}=1.
		\end{align*}
	\end{enumerate}
\end{proposition}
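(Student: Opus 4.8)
The three assertions are established in order; (1) carries all the weight, and given it, (2) and (3) follow from the frequency machinery already in place together with routine weak-convergence arguments.

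\emph{Part (1).} Once $V(0+)=0$ is known, $\lim_{r\to0+}D(r)=H(0+)$ is immediate from $D=H+V$ and the existence of $H(0+)$ (Proposition~\ref{Proposition: fre(2)}(4)). So the plan is to prove $V(0+)=0$. I would begin from two facts already in hand: $r\mapsto\frac{1}{r}V^{2}(r)\in L^{1}(0,r_{0})$ (Proposition~\ref{Proposition: fre(2)}(3)), and the decomposition $V(r)=\widetilde V(r)+\frac{1}{2}Z(r)$ with $\widetilde V\geqslant0$ and $|Z(r)|\leqslant C_{2}r$ (Lemma~\ref{Lemma: V(r)} and \eqref{Formula: order of Z(r)}). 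The integrability forces $\liminf_{r\to0+}V^{2}(r)=0$, and since $V(r)=\widetilde V(r)+O(r)\geqslant-Cr$, this yields $\liminf_{r\to0+}V(r)=0$ and $\liminf_{r\to0+}\widetilde V(r)=0$. To upgrade this $\liminf$ to a genuine limit I would mimic the scheme of V\v{a}rv\v{a}ruc\v{a}--Weiss \cite[Section~6]{VW2012}: feed the bound $|K(r)|\leqslant C_{0}r\int_{\partial B_{r}(X_{0})}\frac{1}{x}\psi^{2}\,d\mathcal{H}^{1}$ of Corollary~\ref{Corollary: vort} and $|Z(r)|\leqslant C_{2}r$ into the frequency identities \eqref{Formula: H'(r)} and \eqref{Formula: H'(r)(1)}; use $H(r)-\frac{3}{2}\geqslant-C_{1}r^{2}$ (Proposition~\ref{Proposition: fre(2)}(1)) and the boundedness of $H$ near $0$ to absorb the non-sign-definite terms into a function in $L^{1}(0,r_{0})$; and then exploit the non-negativity of $\widetilde V$ together with a Bessel-type inequality as in \cite[(6.12)]{VW2012} to conclude $\widetilde V(0+)=0$, whence $V(0+)=0$.

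\emph{The main obstacle} is exactly this last step. In the two-dimensional problem the quantity playing the role of $V$ is itself non-negative, and its vanishing at $0$ rests crucially on that sign; here $V$ is only a \emph{perturbation} of the non-negative $\widetilde V$, the error coming from the axis terms $\sum_{i=1}^{2}\int_{0}^{r}t^{-4}I_{i}(t)\,dt$ and $\frac{3}{2}\int_{0}^{r}t^{-5}J_{1}(t)\,dt$. These terms were already shown to cancel or vanish in \eqref{Formula: r1}, \eqref{Formula: r2} and \eqref{Formula: r3}, and the weight $\frac{1}{x}$ is uniformly comparable to $\frac{1}{x_{0}}$ on $B_{r}(X_{0})$ for small $r$; so the perturbation is genuinely $O(r)$ and the argument of \cite{VW2012} should carry over. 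Verifying the absorptions carefully --- in particular treating the borderline case $H(0+)=\frac{3}{2}$ --- is the delicate part of the whole proposition.

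\emph{Part (2).} A change of variables in \eqref{Formula: vm} gives $\int_{\partial B_{1}}\frac{1}{x_{0}+r_{m}x}\phi_{m}^{2}\,d\mathcal{H}^{1}=1$, and since $\frac{1}{x_{0}+r_{m}x}\to\frac{1}{x_{0}}$ uniformly on $B_{1}$ this bounds $\|\phi_{m}\|_{L^{2}(\partial B_{1})}$; the same substitution turns \eqref{Formula: vort(2)} into $\int_{B_{1}}\frac{1}{x_{0}+r_{m}x}\phi_{m}^{2}\,dX\leqslant1$, bounding $\|\phi_{m}\|_{L^{2}(B_{1})}$. Finally \eqref{Formula: D(rm)} gives $\int_{B_{1}}\frac{1}{x_{0}+r_{m}x}|\nabla\phi_{m}|^{2}\,dX=D(r_{m})+O(r_{m}^{2})$, and $D(r_{m})=H(r_{m})+V(r_{m})\to H(0+)$ by part~(1), so $\|\nabla\phi_{m}\|_{L^{2}(B_{1})}$ is bounded; hence $\{\phi_{m}\}$ is bounded in $W^{1,2}(B_{1})$.

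\emph{Part (3).} Let $\phi_{m}\to\phi_{0}$ in $W^{1,2}(B_{1})$. Since $\psi\geqslant0$ and $\psi\equiv0$ in $\{y\leqslant0\}$, each $\phi_{m}$ inherits these properties, hence so does $\phi_{0}$. By continuity of the trace $W^{1,2}(B_{1})\to L^{2}(\partial B_{1})$ and the uniform convergence of the weight, passing to the limit in $\int_{\partial B_{1}}\frac{1}{x_{0}+r_{m}x}\phi_{m}^{2}\,d\mathcal{H}^{1}=1$ yields $\int_{\partial B_{1}}\frac{1}{x_{0}}\phi_{0}^{2}\,d\mathcal{H}^{1}=1$. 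For the homogeneity I would pass to the limit in \eqref{Formula: vm(1)} of Lemma~\ref{Lemma: formula for phim}: on each annulus $B_{\sigma}\setminus B_{\varrho}$ the weight $|X|^{-5}$ is bounded and the integrand of \eqref{Formula: vm(1)} converges in $L^{2}(B_{\sigma}\setminus B_{\varrho})$ to its analogue with $\phi_{0}$ in place of $\phi_{m}$ (by the strong $W^{1,2}$ convergence, $X$ being bounded), so \eqref{Formula: vm(1)} forces $\nabla\phi_{0}\cdot X=H_{X_{0},\psi}(0+)\phi_{0}$ a.e. in $B_{\sigma}\setminus B_{\varrho}$; letting $\varrho\to0$ and $\sigma\to1$ gives this Euler identity throughout $B_{1}$, which together with $\phi_{0}\in W^{1,2}(B_{1})$ means $\phi_{0}$ is homogeneous of degree $H_{X_{0},\psi}(0+)$.
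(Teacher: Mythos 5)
Parts (2) and (3) of your proposal are correct and run along essentially the same lines as the paper (note the paper's proof of (3) assumes only \emph{weak} $W^{1,2}$-convergence of $\phi_m$; the Euler identity $\nabla\phi_0\cdot X=H(0+)\phi_0$ already follows from weak $W^{1,2}$-convergence of $\phi_m$ combined with the strong weighted $L^2$-vanishing in \eqref{Formula: vm(1)}, so invoking strong convergence there is unnecessary).

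Part (1), which you yourself flag as the crux, contains a genuine gap. You correctly deduce $\liminf_{r\to0+}V(r)=0$ from the integrability of $r\mapsto\tfrac1rV^2(r)$, $V=\widetilde V+\tfrac12Z$, $\widetilde V\geq0$, and $|Z|\leq C_2 r$, but the mechanism you propose to upgrade the liminf to a limit --- ``non-negativity of $\widetilde V$ together with a Bessel-type inequality as in [VW2012, (6.12)]'' --- is not what does the job. In this paper that differential inequality is used to prove $H(r)-\tfrac32\geq-C_1r^2$ (Proposition~\ref{Proposition: fre(2)}(1)); it says nothing about $V(0+)$. The paper's actual argument is a proof by contradiction coupled to a blow-up and a \emph{transfer} step: assuming $V(s_m)\not\to0$ along some $s_m\to0+$, the $L^1$-integrability together with $|Z|\leq C_2r$ produces an interleaved sequence $t_m\in[s_m,2s_m]$ with $\widetilde V(t_m)\to0$; blowing up along $r_m:=t_m$ (boundedness of $D(r_m)$, hence of $\phi_m$ in $W^{1,2}(B_1)$, follows precisely because $V(r_m)\to0$ and $H$ is bounded near $0$) yields, via Lemma~\ref{Lemma: formula for phim} and the compact trace embedding, a nontrivial homogeneous weak limit $\phi_0$. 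The transfer step then exploits the monotonicity of the numerator of $\widetilde V$ (since $x_0y^+(1-\chi_{\{\psi>0\}})\geq0$) and the almost-monotonicity of $J$ (Proposition~\ref{Proposition: fre(2)}(2)) to derive $\widetilde V(s_m)\leq\frac{e^{3\beta r_m^2/4}}{2\int_{\partial B_{1/2}}\tfrac{1}{x_0+r_m x}\phi_m^2\,d\mathcal{H}^1}\,\widetilde V(r_m)$, and since $\phi_0$ is nontrivial and homogeneous the denominator stays bounded away from $0$, forcing $\widetilde V(s_m)\to0$ and hence $V(s_m)\to0$, a contradiction. Your proposal contains none of the contradiction set-up, the interleaved sequence $t_m$, the blow-up at $t_m$, or the comparison via the $J$-monotonicity; gesturing that ``the argument of [VW2012] should carry over'' without identifying these ingredients leaves the decisive step of the proposition unproved.
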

\begin{proof}
	(1). Assume for the sake of contradiction this is not the case. Let $s_{m}\to0$ be such that the sequence $V(s_{m})$ is bounded away from $0$. The integrability of $r\mapsto\frac{1}{r}V^{2}\in L^{1}(0,r_{0})$ (Proposition \ref{Proposition: fre(2)} (3)) implies that
	\begin{align*}
		\min_{r\in[s_{m},2s_{m}]}V(r)\to0\quad\text{ as }\quad m\to\infty.
	\end{align*}
	It follows from \eqref{Formula: tildeV(1)} that
	\begin{align*}
		\min_{r\in[s_{m},2s_{m}]}\left(\widetilde{V}(r)+\frac{1}{2}Z(r)\right)\to0\quad\text{ as }\quad m\to\infty.
	\end{align*}
	Furthermore, it follows from \eqref{Formula: order of Z(r)} that
	\begin{align*}
		\min_{r\in[s_{m},2s_{m}]}\widetilde{V}(r)\to0\quad\text{ as }\quad m\to\infty.
	\end{align*}
	Let $t_{m}\in[s_{m},s_{2m}]$ be such that $\widetilde{V}(t_{m})\to0$ as $m\to\infty$. For the choice $r_{m}:=t_{m}$ for every $m$, the sequence $\phi_{m}$ given by \eqref{Formula: vm} satisfies \eqref{Formula: vm(1)}. The fact that $V(r_{m})\to0$ implies that $D(r_{m})$ is bounded. It follows from \eqref{Formula: D(rm)} that  $\phi_{m}$ is bounded in $W^{1,2}(B_{1})$. Let $\phi_{0}$ be any weak limit of $\phi_{m}$ along a subsequence. Note that by the compact embedding $W^{1,2}(B_{1})\hookrightarrow L^{2}(\partial B_{1})$, $\phi_{0}$ has norm $1$ on $L^{2}(\partial B_{1})$, since this is true for all $m$. It follows from \eqref{Formula: vm(1)} that $\phi_{0}$ is a homogeneous function of degree $H(0+)$. Thanks to Proposition \ref{Proposition: fre(2)} (2), one has
	\begin{align}\label{Formula: tildeVsm}
		\widetilde{V}(s_{m})&=\frac{s_{m}^{-3}\int_{B_{s_{m}}(X_{0})}x_{0}y^{+}(1-\chi_{\{\psi>0\}})\:dX}{s_{m}^{-4}\int_{\partial B_{s_{m}}(X_{0})}\tfrac{1}{x}\psi^{2}\:d\mathcal{H}^{1}}\nonumber\\
		&\leqslant\frac{s_{m}^{-3}\int_{B_{r_{m}}(X_{0})}x_{0}y^{+}(1-\chi_{\{\psi>0\}})\:dX}{e^{\beta[(r_{m}^{2}/4)-s_{m}^{2}]}\int_{\partial B_{r_{m}/2}(X_{0})}\tfrac{1}{x}\psi^{2}\:d\mathcal{H}^{1}}\nonumber\\
		&\leqslant\frac{e^{3\beta r_{m}^{2}/4}}{2}\frac{\int_{\partial B_{r_{m}}(X_{0})}\tfrac{1}{x}\psi^{2}\:d\mathcal{H}^{1}}{\int_{\partial B_{r_{m}/2}(X_{0})}\tfrac{1}{x}\psi^{2}\:d\mathcal{H}^{1}}\widetilde{V}(r_{m})\nonumber\\
		&=\frac{e^{3\beta r_{m}^{2}/4}}{2\int_{\partial B_{1/2}}\tfrac{1}{x_{0}+r_{m}x}\phi_{m}^{2}\:d\mathcal{H}^{1}}\widetilde{V}(r_{m}).
	\end{align}
	Since, at least along a subsequence 
	\begin{align*}
		\int_{\partial B_{1/2}}\frac{1}{x_{0}+r_{m}x}\phi_{m}^{2}\:d\mathcal{H}^{1}\to\int_{\partial B_{1/2}}\frac{1}{x_{0}}\phi_{0}^{2}\:d\mathcal{H}^{1}>0,
	\end{align*}
	we have from \eqref{Formula: tildeVsm} that $\widetilde{V}(r_{m})\to0$, which implies that $\widetilde{V}(s_{m})\to0$. This together with $V(s_{m})=\widetilde{V}(s_{m})+\frac{1}{2}Z(s_{m})$ imply that $V(s_{m})\to0$ as $m\to\infty$. This contradicts to the choice of $V(s_{m})$. It follows indeed that $V(r)\to0$ as $r\to0+$, and therefore $D(r)\to H(0+)$.
	
	(2). It follows from \eqref{Formula: D(rm)} that the boundedness of the sequence $\phi_{m}$ in $W^{1,2}(B_{1})$ is equivalent to the boundedness of $D(r_{m})$, which is true by the statement (1).
	
	(3). Let $r_{m}\to0+$ be an arbitrary sequence such that $\phi_{m}$ converges weakly to $\phi_{0}$. The fact that $\phi_{0}$ is a homogeneous function of degree $H(0+)$ follows directly from \eqref{Formula: vm(1)}. Since $\phi_{0}$ belongs to $W^{1,2}(B_{1})$, its homogeneity implies that $\phi_{0}$ is continuous in $B_{1}$. The fact that $\int_{\partial B_{1}}\frac{1}{x_{0}}\phi_{0}^{2}\:d\mathcal{H}^{1}=1$ is a consequence of $\int_{\partial B_{1}}\tfrac{1}{x_{0}+r_{m}x}\phi_{m}^{2}\:d\mathcal{H}^{1}=1$ for all $m$. This concludes the proof.
\end{proof}
\begin{proposition}\label{Proposition: strong convergence}
	Let $\psi$ be a variational solution of the problem \eqref{Formula: model problem1} with the growth assumption \eqref{Formula: growth assumption}. Assume that the nonlinearity $f$ satisfies \eqref{Formula: f(z)}. Let $X_{0}\in\Sigma_{\psi}$ and let $r_{m}\to0+$ be such that the sequence $\phi_{m}$ given by \eqref{Formula: vm} converges weakly to $\phi_{0}$ in $W^{1,2}(B_{1})$. Then $\phi_{m}$ converges to $\phi_{0}$ strongly in $W_{\mathrm{loc}}^{1,2}(B_{1}\setminus\{0\})$, $\phi_{0}$ is continuous in $B_{1}$ and $\Delta \phi_{0}$ is a nonnegative Radon measure satisfying $\phi_{0}\Delta \phi_{0}=0$ in the sense of Radon measures in $B_{1}$.
\end{proposition}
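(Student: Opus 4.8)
The plan is to pass to the limit $m\to\infty$ in the rescaled domain variation identity satisfied by $\phi_m$, using the axisymmetric concentration compactness theorem \cite[Theorem 6.1]{VW2014} to overcome the lack of compactness. First, set $\kappa_m:=\big(r_m^{-1}\int_{\partial B_{r_m}(X_0)}\tfrac1x\psi^2\,d\mathcal H^1\big)^{1/2}$, so that $\psi(X_0+r_mX)=\kappa_m\phi_m(X)$. Rescaling the first equation of \eqref{Formula: model problem1} gives, in $B_1\cap\{\phi_m>0\}$,
\[
\dvg\Big(\tfrac{1}{x_0+r_mx}\nabla\phi_m\Big)=g_m:=-\tfrac{r_m^2}{\kappa_m}(x_0+r_mx)\,f(\kappa_m\phi_m),
\]
and Assumption \ref{Assumption: f} yields $|g_m|\le Cr_m^2(x_0+r_mx)\phi_m$, hence $g_m\to0$ in $L^2_{\mathrm{loc}}(B_1)$ since $\{\phi_m\}$ is bounded in $W^{1,2}(B_1)$ (Proposition \ref{Proposition: vm}(2)). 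Rescaling \eqref{Formula: Variational solution} and dividing by the natural normalizing factor produces a domain variation identity for $\phi_m$ in which the term carrying $F(\kappa_m\phi_m)$ is $O(r_m^2)$ in $L^1$ by \eqref{Formula: F(z)}, hence negligible; the only term that does not pass trivially to the limit is the Bernoulli term, which after rescaling carries $\chi_{\{\phi_m>0\}}$ together with a rescaled weight ($\propto J(r_m)^{-1}$) that need not stay bounded. This is exactly the loss of compactness referred to in the Introduction.

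Next, I would invoke \cite[Theorem 6.1]{VW2014}. Its hypotheses hold here: $\{\phi_m\}$ is bounded in $W^{1,2}(B_1)$, $\phi_m\ge0$ with $\phi_m\equiv0$ in $\{y\le0\}$, the vorticity contribution is $o(1)$ by the previous step, and the asymptotic homogeneity \eqref{Formula: vm(1)} of exponent $H_{X_0,\psi}(0+)>0$ confines the only possible concentration point to the origin. The theorem then yields, along a subsequence, $\phi_m\to\phi_0$ strongly in $W^{1,2}_{\mathrm{loc}}(B_1\setminus\{0\})$ and locally uniformly in $B_1\setminus\{0\}$, and, after passing to the limit in the rescaled domain variation identity, that $\phi_0$ solves the limiting problem in which the Bernoulli term has dropped out; in particular $\dvg(\tfrac1{x_0}\nabla\phi_0)=0$, i.e. $\Delta\phi_0=0$, in $\{\phi_0>0\}$. (Being homogeneous of degree $H_{X_0,\psi}(0+)\ge\tfrac32$, $\phi_0$ vanishes at $0$, so $\{\phi_0>0\}\subset B_1\setminus\{0\}$ and the origin is a removable singularity for the distributional Laplacian.)

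Continuity of $\phi_0$ in $B_1$ is already contained in Proposition \ref{Proposition: vm}(3) (homogeneity of positive degree together with $\phi_0\in W^{1,2}(B_1)$), and is in any case consistent with the locally uniform convergence above. For the last assertion, let $0\le\zeta\in C_0^\infty(B_1)$: since $\phi_0$ is continuous, $\{\phi_0>0\}$ is open and $\phi_0=0$ on $\partial\{\phi_0>0\}\cap B_1$, while $\phi_0\ge0$ in $\{\phi_0>0\}$ forces the outward normal derivative $\partial_\nu\phi_0\le0$ there; using $\Delta\phi_0=0$ in $\{\phi_0>0\}$ and Green's identity (justified from the limiting domain variation identity as in \cite[Section 3]{VW2011}),
\[
\int_{B_1}\phi_0\,\Delta\zeta\,dX=\int_{\{\phi_0>0\}}\phi_0\,\Delta\zeta\,dX=-\int_{\partial\{\phi_0>0\}}\zeta\,\partial_\nu\phi_0\,d\mathcal H^1\ge0.
\]
Hence $\Delta\phi_0$ is a nonnegative distribution, thus a Radon measure on $B_1$; it vanishes on the open set $\{\phi_0>0\}$, so it is carried by $\{\phi_0=0\}$, where $\phi_0=0$ by continuity, and therefore $\phi_0\,\Delta\phi_0=0$ as Radon measures.

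The main obstacle is the compactness step: the rescaled weight multiplying $\chi_{\{\phi_m>0\}}$ in the Bernoulli term need not remain bounded, so one cannot pass to the limit termwise, and ruling out an energy defect of $|\nabla\phi_m|^2$ away from the origin — while simultaneously controlling the extra terms generated by the axis of symmetry — is precisely what \cite[Theorem 6.1]{VW2014} is meant to deliver.
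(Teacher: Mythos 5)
Your plan is the right one — both you and the paper rely on the axisymmetric concentration-compactness result of V\u{a}rv\u{a}ruc\u{a}–Weiss \cite[Theorem 6.1]{VW2014}, and you correctly identify the Bernoulli term $\chi_{\{\phi_m>0\}}/J(r_m)$ as the source of the loss of compactness and the vorticity contribution as negligible. But there are two places where the proposal does not actually close the argument.

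\textbf{The concentration-compactness theorem cannot be applied as a black box.} \cite[Theorem 6.1]{VW2014} is stated for the irrotational equation, i.e.\ for a sequence with $\dvg\bigl(\tfrac{1}{x_0+r_m x}\nabla\phi_m\bigr)=0$ in $\{\phi_m>0\}$. Here the $\phi_m$ only satisfy $\dvg\bigl(\tfrac{1}{x_0+r_m x}\nabla\phi_m\bigr)=g_m$ with $g_m\to0$, and the Delort mechanism (positivity of the vorticity, divergence-free vector field, positive-definite quadratic form) needs an \emph{exact} sign condition, not an $o(1)$ error. The paper resolves this by modifying the sequence: it replaces $\phi_m$ by
\[
\tilde\phi_m:=\Bigl(\phi_m+C_3(\sigma)r_m\bigl(\tfrac{x_0x^2}{2}+\tfrac{r_m x^3}{3}\bigr)\Bigr)\star\varphi_m,
\]
which is smooth, satisfies $\dvg\bigl(\tfrac{1}{x_0+r_m x}\nabla\tilde\phi_m\bigr)\geqslant0$ exactly, and differs from $\phi_m$ by $o(1)$ in $W^{1,2}(B_\sigma)$. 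Only then are the three-dimensional velocity fields $\widetilde V^m$ built and Delort's result invoked for the products $\partial_x\phi_m\partial_y\phi_m$ and $(\partial_x\phi_m)^2-(\partial_y\phi_m)^2$. Without this modification step, or an equally explicit stability argument for \cite[Theorem 6.1]{VW2014} under an $o(1)$ source, the compactness claim is unproved.

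\textbf{The Radon-measure step is derived more directly in the paper.} You argue $\Delta\phi_0\geqslant0$ via Green's identity on $\partial\{\phi_0>0\}$ with $\partial_\nu\phi_0\leqslant0$, which presupposes enough regularity of $\partial\{\phi_0>0\}$ to make the boundary integral meaningful; that regularity is not available at this stage. The paper sidesteps this entirely: from \eqref{Formula: vm(2)(1)} it obtains $\dvg\bigl(\tfrac{1}{x_0+r_m x}\nabla\phi_m\bigr)\geqslant-C_1 r_m\phi_m$ in $\{\phi_m>0\}$ in the sense of distributions, upgrades this to $\geqslant-C_3(\sigma)r_m$ on $B_\sigma$ using \cite[Theorem 8.17]{GT1983}, and passes to the limit to conclude $\Delta\phi_0\geqslant0$ as a Radon measure — no free-boundary regularity required. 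Then $\phi_0\Delta\phi_0=0$ follows from the continuity of $\phi_0$, the limiting domain-variation identity obtained from the strong $W^{1,2}_{\rm loc}(B_1\setminus\{0\})$ convergence, and the fact that $\phi_0\equiv0$ in $\{y\leqslant0\}$. Your final conclusion is correct, but the route you sketch has a hole where the paper has a clean elementary argument.

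Everything else — $g_m=O(r_m^2)$ in $L^2_{\rm loc}$ via Assumption~\ref{Assumption: f}, continuity of $\phi_0$ from homogeneity plus $W^{1,2}$, concentration confined to the origin by \eqref{Formula: vm(1)} — matches the paper.
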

\begin{proof}
	The proof is similar to \cite[Theorem 6.1]{VW2014} and we only exhibit the major differences in the following. Note first that
	\begin{align}\label{Formula: vm(2)(1)}
		\begin{split}
			\operatorname{div}\left(\frac{1}{x_{0}+r_{m}x}\nabla\phi_{m}(x)\right)&=\frac{r_{m}}{\sqrt{r_{m}^{-1}\int_{\partial B_{r_{m}}(X_{0})}\tfrac{1}{x}\psi^{2}\:d\mathcal{H}^{1}}}\operatorname{div}\left(\tfrac{\nabla \psi(X_{0}+r_{m}X)}{x_{0}+r_{m}x}\right)\\
			&=\frac{-r_{m}(x_{0}+r_{m}x)f(\psi(X_{0}+r_{m}X))}{\sqrt{r_{m}^{-1}\int_{\partial B_{r_{m}}(X_{0})}\tfrac{1}{x}\psi^{2}\:d\mathcal{H}^{1}}}\\
			&\geqslant\frac{-Cr_{m}(x_{0}+r_{m}x)\psi(X_{0}+r_{m}X)}{\sqrt{r_{m}^{-1}\int_{\partial B_{r_{m}}(X_{0})}\tfrac{1}{x}\psi^{2}\:d\mathcal{H}^{1}}}\\
			&\geqslant-C_{1}r_{m}\phi_{m}\quad\text{ for }\quad\phi_{m}>0,
		\end{split}
	\end{align}
	in the sense of distributions. We infer from \cite[Theorem 8.17]{GT1983} that
	\begin{align*}
		\operatorname{div}\left(\frac{1}{x_{0}+r_{m}x}\nabla\phi_{m}\right)\geqslant-C_{3}(\sigma)r_{m}\quad\text{ in }\quad B_{\sigma}
	\end{align*}
	in the sense of measures. Here we use the fact that $\phi_{m}$ is bounded in $L^{1}(B_{1})$. Set $\mu_{m}:=\operatorname{div}\left(\tfrac{1}{x_{0}+r_{m}x}\nabla\phi_{m}\right)$ for all $m$, and it follows that for each non-negative $\eta\in C_{0}^{\infty}(B_{1})$ such that $\eta=1$ in $B_{(\sigma+1)/2}$
	\begin{align*}
		\int_{B_{(\sigma+1)/2}}\eta\:d\mu_{m}\leqslant C_{4}\quad\text{ for all }\quad m\in\mathbb{N},
	\end{align*}
	where we have used \eqref{Formula: vm(2)(1)} in the third inequality and the fact that $\phi_{m}$ is bounded in $L^{1}(B_{1})$ in the last inequality. Since $x_{0}>0$, we deduce from \eqref{Formula: vm(2)(1)} that $\Delta\phi_{0}$ is a non-negative Radon measure in $B_{1}$. The continuity of $\phi_{0}$ implies therefore that $\phi_{0}\Delta\phi_{0}$ is well defined as a non-negative Radon measure on $B_{1}$. In order to apply the concentration compactness by Delort in \cite{D1992}, we modify each $\phi_{m}$ to
	\begin{align*}
		\tilde{\phi}_{m}:=(\phi_{m}+C_{3}(\sigma)r_{m}(\tfrac{x_{0}x^{2}}{2}+\tfrac{r_{m}x^{3}}{3}))\star\varphi_{m}\in C^{\infty}(B_{1}),
	\end{align*}
	where $\star$ denotes the convolution and $\varphi_{m}$ is a standard mollifier such that
	\begin{align}\label{Formula: vm(3)}
		\operatorname{div}\left(\frac{1}{x_{0}+r_{m}x}\nabla\tilde{\phi}_{m}\right)\geqslant0\quad\text{ in }\quad B_{(\sigma+1)/2},\qquad\int_{B_{(\sigma+1)/2}}\eta d\tilde{\mu}_{m}\leqslant C<+\infty,\quad\text{ for all }m,
	\end{align}
	and
	\begin{align}\label{Formula: vm(4)}
		\|\phi_{m}-\tilde{\phi}_{m}\|_{W^{1,2}(B_{\sigma})}\to0\quad\text{ as }\quad m\to\infty,
	\end{align}
	where $\tilde{\mu}_{m}:=\operatorname{div}\left(\frac{1}{x_{0}+r_{m}x}\nabla\tilde{\phi}_{m}\right)$. Introduce a new set of coordinates $x'=x_{0}+r_{m}x$ and $y'=r_{m}y$ and consider the velocity vector field associated with $\phi_{m}(x',y')$:
	\begin{align*}
		V^{m}(X,Y,Z):=\left(-\frac{1}{x'}\partial_{y'}\phi_{m}\cos\theta,-\frac{1}{x'}\partial_{y'}\phi_{m}\sin\theta,\frac{1}{x'}\partial_{x'}\phi_{m}\right),
	\end{align*}
	where $(X,Y,Z)=(x'\cos\varphi,x'\sin\varphi,y')$. We use the notation $\nabla'=(\partial_{x'},\partial_{y'})$ and $\operatorname{div}'=\partial_{x'}+\partial_{y'}$. We define the velocity vector field associated with the function $\tilde{\phi}_{m}(x',y')$, \begin{align*}
		\widetilde{V}^{m}(X,Y,Z):=\left(-\frac{1}{x'}\partial_{y'}\tilde{\phi}_{m}\cos\theta,-\frac{1}{x'}\partial_{y'}\tilde{\phi}_{m}\sin\theta,\frac{1}{x'}\partial_{x'}\tilde{\phi}_{m}\right).
	\end{align*}
	We have that $\widetilde{V}^{m}$ is smooth and satisfies $\operatorname{div}'\widetilde{V}^{m}(X,Y,Z)=0$ and
	\begin{align*}
		\operatorname{curl}\widetilde{V}^{m}=\operatorname{div}'\left(\frac{1}{x'}\nabla'\tilde{\phi}_{m}\right)(-\sin\theta,\cos\theta,0)\geqslant0,
	\end{align*}
	where we have used \eqref{Formula: vm(3)} in the last inequality. We consider also
	\begin{align*}
		V^{0}(X,Y,Z):=\left(-\frac{1}{x_{0}}\partial_{y'}\phi_{0}\cos\theta,-\frac{1}{x_{0}}\partial_{y'}\phi_{0}\sin\theta,\frac{1}{x_{0}}\partial_{x'}\phi_{0}\right).
	\end{align*}
	Taking into account \eqref{Formula: vm(3)} we are in a position to proceed as a similar argument as in \cite[Theorem 6.1]{VW2014} to conclude the following convergence 
	\begin{align}\label{Formula: vm(5)}
		\frac{1}{x_{0}+r_{m}x}\partial_{x}\phi_{m}\partial_{y}\phi_{m}\to\frac{1}{x_{0}}\partial_{x}\phi_{0}\partial_{y}\phi_{0}
	\end{align}
	and
	\begin{align}\label{Formula: vm(6)}
		\frac{1}{x_{0}+r_{m}x}((\partial_{x}\phi_{m})^{2}-(\partial_{y}\phi_{m})^{2})\to\frac{1}{x_{0}}((\partial_{x}\phi_{0})^{2}-(\partial_{y}\phi_{0})^{2})
	\end{align}
	in the sense of distributions in $B_{\sigma}$ as $m\to\infty$. Observe now that \eqref{Formula: vm(1)} shows that
	\begin{align}\label{Formula: vm(7)}
		\frac{1}{x_{0}+r_{m}x}\left(\nabla\phi_{m}\cdot X-H(0+)\phi_{m}\right)\to0
	\end{align}
	strongly in $L^{2}(B_{\sigma}\setminus B_{\tau})$ as $m\to\infty$. A combination of \eqref{Formula: vm(5)}- \eqref{Formula: vm(7)} and a similar argument as in the proof of \cite[Theorem 6.1]{VW2014} gives that
	\begin{align*}
		\int_{B_{\sigma}\setminus B_{\tau}}(\partial_{x}\phi_{m})^{2}\eta dX\to\int_{B_{\sigma}\setminus B_{\tau}}(\partial_{x}\phi_{0})^{2}\eta dX
	\end{align*}
	for every $0\leqslant\eta\in C_{0}^{0}(B_{\sigma}\setminus B_{\tau})$. Using \eqref{Formula: vm(5)} once more again yields that $\nabla\phi_{m}\to\nabla\phi_{0}$ strongly in $L_{\mathrm{loc}}^{2}(B_{\sigma}\setminus B_{\tau})$ with $0<\tau<\sigma<1$ arbitrarily. It follows that $\nabla\phi_{m}$ converges to $\nabla\phi_{0}$ strongly in $L_{\mathrm{loc}}^{2}(B_{1}\setminus\{0\})$. As a direct application of the strong convergence,
	\begin{align*}
		\int_{B_{1}}\frac{1}{x_{0}}\nabla(\eta x_{0})\cdot\nabla\phi_{0}dX=0\quad\text{ for all }\quad\eta\in C_{0}^{1}(B_{1}\setminus\{0\}).
	\end{align*}
	Since $\phi_{0}=0$ in $B_{1}\cap\{y\leqslant0\}$, we have that $\phi_{0}\Delta\phi_{0}=0$ in the sense of distributions in $B_{1}$.
\end{proof}
With the aid of the strong convergence of $\phi_{m}$ to $\phi_{0}$, let $r_{m}\to0+$ be an arbitrary sequence so that the sequence $\phi_{m}$ defined in \eqref{Formula: vm} converges weakly in $W^{1,2}(B_{1})$ to a limit $\phi_{0}$. It follows from Proposition \ref{Proposition: vm} (3) that $\phi_{0}\geqslant0$, $\phi_{0}$ is continuous and is a homogeneous function of degree $H(0+)\geqslant3/2$. Moreover, $\phi_{0}\Delta\phi_{0}$ is a non-negative Radon measure satisfying $\phi_{0}\Delta\phi_{0}=0$ in $B_{1}$. The strong convergence of $\phi_{m}$ to $\phi_{0}$ in $W^{1,2}(B_{1}\setminus\{0\})$ and $V(r_{m})\to0$ as $m\to\infty$ imply that
\begin{align*}
	0=\int_{B_{1}}|\nabla\phi_{0}|^{2}\operatorname{div}\xi-2\nabla\phi_{0}D\xi\nabla\phi_{0}dX,
\end{align*}
for each $\xi\in C_{0}^{1}(B_{1}\cap\{y>0\};\mathbb{R}^{2})$. Then a similar argument as in the proof of \cite[Theorem 9.1]{VW2012} gives the following result.
\begin{proposition}\label{Proposition: v0}
	Let $\psi$ be a variational solution of the problem \eqref{Formula: model problem1} with the growth assumption \eqref{Formula: growth assumption}. Assume that the nonlinearity $f$ satisfies \eqref{Formula: f(z)}. Then at each point $X_{0}$ of the set $\Sigma_{\psi}$ there exists an integer $N(X_{0})\geqslant2$ such that
	\begin{equation*}
		H_{X_{0},\psi}(0+)=N(X_{0})
	\end{equation*}
	and
	\begin{align}\label{Formula: freq}
		\frac{\psi(X_{0}+rX)}{\sqrt{r^{-1}\int_{\partial B_{r}(X_{0})}\tfrac{1}{x}\psi^{2}d\mathcal{H}^{1}}}\to\frac{\sqrt{x_{0}}\rho^{N(X_{0})}|\sin(N(X_{0})\min(\max(\theta,0),\pi))|}{\sqrt{\int_{0}^{\pi}\sin^{2}(N(X_{0})\theta)d\theta}}\quad\text{ as }\quad r\to0+,
	\end{align}
	strongly in $W_{\mathrm{loc}}^{1,2}(B_{1}\setminus\{0\})$ and weakly in $W^{1,2}(B_{1})$, where $X=(\rho\cos\theta,\rho\sin\theta)$.
\end{proposition}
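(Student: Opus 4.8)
The plan is to extract full rigidity from the information already available about the blow-up limit $\phi_0$. By Proposition \ref{Proposition: vm} (3) and Proposition \ref{Proposition: strong convergence}, $\phi_0$ is a nonnegative continuous function, homogeneous of degree $\alpha:=H_{X_0,\psi}(0+)$, with $\phi_0\equiv0$ in $B_1\cap\{y\le0\}$ and $\int_{\PBo}\tfrac1{x_0}\phi_0^2\,d\mathcal H^1=1$, and $\Delta\phi_0$ is a nonnegative Radon measure satisfying $\phi_0\Delta\phi_0=0$; hence $\phi_0$ is harmonic in the open set $\{\phi_0>0\}$. Moreover, since $V(r_m)\to0$ by Proposition \ref{Proposition: vm} (1), passing to the limit in the domain variation formula displayed just before the statement gives $\int_{\Bo}|\nabla\phi_0|^2\operatorname{div}\xi-2\nabla\phi_0 D\xi\nabla\phi_0\,dX=0$ for every $\xi\in C_0^1(B_1\cap\{y>0\};\R^2)$. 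This is exactly the assertion that, \emph{inside the open half-disk} $\{y>0\}$, $\phi_0$ is a two-phase variational solution with the homogeneous free boundary condition $|\nabla\phi_0^+|^2=|\nabla\phi_0^-|^2$. From this point on the argument parallels \cite[Theorem 9.1]{VW2012}, the weight $\tfrac1x$ having frozen to the constant $\tfrac1{x_0}$.

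Next I would pass to polar coordinates $X=(\rho\cos\theta,\rho\sin\theta)$ and write $\phi_0(\rho,\theta)=\rho^{\alpha}g(\theta)$, where $g\ge0$ is continuous on the circle, supported in $\theta\in[0,\pi]$ (because $\phi_0$ vanishes on $\{y\le0\}$), with $g(0)=g(\pi)=0$ by continuity. On each connected component $(a,b)$ of the open set $\{g>0\}\cap(0,\pi)$, harmonicity of $\phi_0$ forces $g''+\alpha^2 g=0$; since $g$ vanishes at both endpoints of such a component — each endpoint being either an interior zero of $g$ or one of the points $0,\pi$ — and is strictly positive inside, one gets $g(\theta)=c\,\sin\!\big(\alpha(\theta-a)\big)$ on $(a,b)$ with some $c>0$, and, crucially, $b-a=\pi/\alpha$. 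Thus \emph{every} component of $\{g>0\}\cap(0,\pi)$ has angular width exactly $\pi/\alpha\le\tfrac{2\pi}{3}$, so there are only finitely many of them.

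Then I would glue the components using the free boundary condition. If two consecutive components were separated by an arc of positive length on which $g\equiv0$, then at the common endpoint $\theta_0\in(0,\pi)$ the tangential derivative of $\phi_0$ would jump from a nonzero value to $0$ while the radial derivative vanishes, contradicting continuity of $|\nabla\phi_0|$ across the free boundary inside $\{y>0\}$; hence the components tile $(0,\pi)$ up to finitely many points, and if $N$ denotes their number then $\pi=N\,\pi/\alpha$, i.e. $\alpha=N\in\N$. Since $\alpha=H_{X_0,\psi}(0+)\ge\tfrac32$ by Proposition \ref{Proposition: fre(2)} (4), this gives $N=N(X_0)\ge2$. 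Matching $|\nabla\phi_0^+|=|\nabla\phi_0^-|$ at each interior zero $\theta=j\pi/N$ forces all the constants $c$ to coincide, whence $g(\theta)=c\,|\sin(N\theta)|$ on $[0,\pi]$ and $g\equiv0$ elsewhere; the normalization $\int_{\PBo}\tfrac1{x_0}\phi_0^2\,d\mathcal H^1=1$ fixes $c=\sqrt{x_0\big/\int_0^\pi\sin^2(N\theta)\,d\theta}$. Replacing $\theta$ by $\min(\max(\theta,0),\pi)$ in the formula is harmless, since both clamped values give $\sin(N\,\cdot\,)=0$ as $N\in\Z$, and this yields the stated expression. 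Finally, as $\phi_0$ is thereby uniquely determined independently of the sequence $r_m\to0+$, the convergence in \eqref{Formula: freq} holds as $r\to0+$, in the modes (strong in $W^{1,2}_{\mathrm{loc}}(B_1\setminus\{0\})$, weak in $W^{1,2}(B_1)$) supplied by Propositions \ref{Proposition: vm} and \ref{Proposition: strong convergence}.

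I expect the main obstacle to be the gluing step: ruling out one-sided touches of the free boundary inside $\{y>0\}$ and thereby forcing simultaneously the contiguous tiling of $(0,\pi)$ and the integrality $\alpha=N$. This is precisely where the vanishing $V(r_m)\to0$ — hence the clean homogeneous free boundary condition rather than one with a $y$-dependent right-hand side — is indispensable, and where one must check that the argument of \cite[Theorem 9.1]{VW2012} survives the presence of the constant weight $\tfrac1{x_0}$ and the fact that $V$ carried no sign before the limit was taken (both handled via Lemma \ref{Lemma: V(r)} and Proposition \ref{Proposition: vm}).
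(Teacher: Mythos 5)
Your proposal is correct and takes essentially the same route as the paper: the paper's proof of Proposition \ref{Proposition: v0} is a one-line citation to ``a similar argument as in the proof of \cite[Theorem 9.1]{VW2012},'' and you have spelled out precisely that Liouville-type classification --- homogeneity plus harmonicity in $\{\phi_0>0\}$, the ODE $g''+\alpha^2 g=0$ on each spherical component, and the Pohozaev/domain-variation identity (available because $V(r_m)\to0$ from Proposition \ref{Proposition: vm}) to glue the components and force $\alpha=N\in\N$ --- with the weight frozen to the constant $1/x_0$ and the normalization fixing the constant in $g=c\,|\sin(N\theta)|$.
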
 
\section{Conclusions}\label{Section: conclusions}
In this section, we collect two main results, the first states that $\Sigma_{\psi}$ is locally finite in $\Omega$, while the second one demonstrates that $\Sigma_{\psi}=\0$. Thus, we proved that the set of horizontally flat singularities is an empty set. 
\begin{theorem}
	Let $\psi$ be a variational solution of the problem \eqref{Formula: model problem1} with the growth assumption \eqref{Formula: growth assumption}. Assume that the nonlinearity $f$ satisfies \eqref{Formula: f(z)}. Then the set $\Sigma_{\psi}$ is a finite set locally in $\Omega$.
\end{theorem}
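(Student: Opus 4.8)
The plan is to argue by contradiction, in the spirit of the local finiteness statement of V\v{a}rv\v{a}ruc\v{a}--Weiss \cite{VW2012}: if $\Sigma_{\y}$ accumulated somewhere in $\W$, then blowing $\y$ up at an accumulation point would produce, near the images of the accumulating stagnation points, the profile of a \emph{non-degenerate} free boundary point (homogeneity one), contradicting the frequency lower bound that Proposition \ref{Proposition: fre(2)} forces at every point of $\Sigma_{\y}$. Concretely: since $\Sigma_{\y}$ is closed (Remark \ref{Remark: uppersemicontinuity}), if it were not locally finite in $\W$ we could choose $X_{0}\in\Sigma_{\y}$ and a sequence $X_{j}\in\Sigma_{\y}\sm\{X_{0}\}$ with $X_{j}\to X_{0}$; because $\Sigma_{\y}\ss S_{\y}^{s}\ss\{y=0\}$ we have $X_{j}=(x_{j},0)$ and $X_{0}=(x_{0},0)$ with $x_{0}>0$, and after passing to a subsequence we may assume $x_{j}>x_{0}$ (the case $x_{j}<x_{0}$ being identical, using the reflection symmetry $\theta\mapsto\pi-\theta$ of the blow-up profile). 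Set $d_{j}:=x_{j}-x_{0}=|X_{j}-X_{0}|\to0$.

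First I would blow up $\y$ at $X_{0}$ at the scales $2d_{j}$: by Proposition \ref{Proposition: v0} the rescalings
\[
\phi_{j}(X):=\frac{\y(X_{0}+2d_{j}X)}{\sqrt{(2d_{j})^{-1}\int_{\p B_{2d_{j}}(X_{0})}\tfrac{1}{x}\y^{2}\,d\mathcal{H}^{1}}}
\]
converge strongly in $W_{\mathrm{loc}}^{1,2}(B_{1}\sm\{0\})$ to $\phi_{0}(X)=c_{N}\rho^{N}|\sin(N\min(\max(\theta,0),\pi))|$, where $N:=N(X_{0})\gg2$ and $c_{N}=\sqrt{x_{0}}/\sqrt{\int_{0}^{\pi}\sin^{2}(N\theta)\,d\theta}>0$. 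Under this rescaling $X_{j}$ corresponds to the interior point $\eta_{0}:=(\tfrac{1}{2},0)\in B_{1}$; more precisely $\y(X_{j}+2d_{j}Y)=\kappa_{j}\phi_{j}(\eta_{0}+Y)$ with $\kappa_{j}$ the normalizing constant above. The key observation is that near $\eta_{0}$ the profile $\phi_{0}$ agrees with $c_{N}\,\mathrm{Im}\big((x+iy)^{N}\big)$ on $\{y>0\}$ and vanishes on $\{y\ll0\}$, so that a Taylor expansion yields
\[
\phi_{0}(\eta_{0}+Y)=c_{*}Y_{2}^{+}+O(|Y|^{2}),\qquad c_{*}:=c_{N}N2^{1-N}>0 ;
\]
in other words $\eta_{0}$ is a \emph{non-degenerate} free boundary point of $\phi_{0}$, of homogeneity $1$. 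Since $B_{\vr}(\eta_{0})\cc B_{1}\sm\{0\}$ for $\vr$ small, Proposition \ref{Proposition: strong convergence} gives $v_{j}:=\phi_{j}(\eta_{0}+\cdot)\to w:=\phi_{0}(\eta_{0}+\cdot)$ strongly in $W^{1,2}(B_{\vr})$.

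Next I would transport the frequency through this rescaling. Changing variables $X=X_{j}+2d_{j}Y$ in the definition \eqref{Formula: D(r)} of $D_{X_{j},\y}$ and absorbing the vorticity term with \eqref{Formula: f(z)}, one obtains
\[
D_{X_{j},\y}(2d_{j}\vr)=\frac{\vr\int_{B_{\vr}}(x_{j}+2d_{j}Y_{1})^{-1}|\grad v_{j}|^{2}\,dY+O(d_{j}^{2})}{\int_{\p B_{\vr}}(x_{j}+2d_{j}Y_{1})^{-1}v_{j}^{2}\,d\mathcal{H}^{1}}\ \xrightarrow[j\to\8]{}\ \frac{\vr\int_{B_{\vr}}|\grad w|^{2}\,dY}{\int_{\p B_{\vr}}w^{2}\,d\mathcal{H}^{1}},
\]
using $x_{j}\to x_{0}$, $d_{j}\to0$ and the strong convergence $v_{j}\to w$. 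From $w=c_{*}Y_{2}^{+}+O(|Y|^{2})$ a direct computation gives $\vr\int_{B_{\vr}}|\grad w|^{2}/\int_{\p B_{\vr}}w^{2}=1+O(\vr)$, so fixing $\vr$ small enough that this ratio is $<\tfrac{5}{4}$, we get $D_{X_{j},\y}(2d_{j}\vr)<\tfrac{5}{4}$ for all large $j$. On the other hand, for each $j$ Lemma \ref{Lemma: V(r)}, the bound \eqref{Formula: order of Z(r)} and Proposition \ref{Proposition: fre(2)}(1) give, for $r\in(0,r_{0})$,
\[
D_{X_{j},\y}(r)=H_{X_{j},\y}(r)+\widetilde{V}_{X_{j},\y}(r)+\tfrac{1}{2}Z_{X_{j},\y}(r)\gg H_{X_{j},\y}(r)-\tfrac{1}{2}|Z_{X_{j},\y}(r)|\gg\tfrac{3}{2}-C_{1}r^{2}-\tfrac{1}{2}C_{2}r ,
\]
where $C_{1},C_{2},r_{0}$ depend only on $x_{j}$, on the constant $C$ in \eqref{Formula: growth assumption} and \eqref{Formula: f(z)}, and on upper and lower bounds for $x$ near $X_{0}$; since $x_{j}\to x_{0}>0$ they may be chosen uniform in $j$, so $D_{X_{j},\y}(r)\gg\tfrac{3}{2}-\bar{C}r$ for all $r\in(0,\bar{r}_{0})$ and all large $j$. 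Taking $r=2d_{j}\vr<\bar{r}_{0}$ (legitimate for large $j$ as $d_{j}\to0$) yields $D_{X_{j},\y}(2d_{j}\vr)\gg\tfrac{3}{2}-2\bar{C}d_{j}\vr\to\tfrac{3}{2}$, contradicting the previous display. Hence $\Sigma_{\y}$ is locally finite in $\W$.

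The part I expect to demand the most care is twofold. First, every constant appearing in Proposition \ref{Proposition: fre(2)}(1) and in \eqref{Formula: order of Z(r)} must be made uniform over the moving base points $X_{j}$ in a fixed neighborhood of $X_{0}$; this is precisely where the \emph{uniformity} of the growth assumption \eqref{Formula: growth assumption} and of the bound \eqref{Formula: f(z)} on $f$, together with $x$ being bounded away from $0$ and $\8$ near $X_{0}$, are essential—one checks, tracing Corollary \ref{Corollary: vort} and the proof of Proposition \ref{Proposition: fre(2)}, that the relevant constants are continuous functions of $x_{0}$ and of $C$. Second, one needs the blow-up convergence in the \emph{strong} $W_{\mathrm{loc}}^{1,2}(B_{1}\sm\{0\})$ topology (Proposition \ref{Proposition: strong convergence}), not merely weak, so that in the passage to the limit in the frequency it is the genuine Dirichlet energy, rather than its lower-semicontinuous envelope, that is recovered.
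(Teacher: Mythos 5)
Your proposal is correct and takes essentially the same approach as the paper: both argue by contradiction, blow $\psi$ up at the accumulation point $X_{0}$ at the scale $2|X_{j}-X_{0}|$ so the accumulating points land at $P=(\pm\tfrac12,0)$, and contrast the homogeneity-one behaviour of the blow-up limit $\phi_{0}$ near $P$ (giving Dirichlet ratio $\to1$) against the frequency lower bound $\geq\tfrac32-O(r)$ forced by Proposition \ref{Proposition: fre(2)}(1) together with Lemma \ref{Lemma: V(r)} and \eqref{Formula: order of Z(r)} at each $X_{j}\in\Sigma_{\psi}$. The only differences are cosmetic — you change variables directly in $D_{X_{j},\psi}(2d_{j}\varrho)$ and send $j\to\infty$ with $\varrho$ fixed, whereas the paper introduces the auxiliary rescaled solutions $\psi_{m}$, applies the proposition at $P\in\Sigma_{\psi_{m}}$, and then takes $m\to\infty$ followed by $r\to0+$; your explicit Taylor expansion of $\phi_{0}$ at $\eta_{0}$ and the remark on uniformity of the constants over the moving base points are both implicit in the paper's version.
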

\begin{proof}
	The proof is similar to \cite[Theorem 9.2]{VW2012}, we sketch the proof here.
	
	Assume for the sake of contradiction that there is a sequence of points $X_{m}\in\Sigma_{\psi}$ converging to $X_{0}\in\Omega$ with $X_{m}\neq X_{0}$ for all $m$. It follows from the upper semicontinuity (recalling Remark \ref{Remark: uppersemicontinuity}) that $X_{0}\in\Sigma_{\psi}$. Choose $r_{m}:=2|X_{m}-X_{0}|$, and assume without loss of generality that the sequence $(X_{m}-X_{0})/r_{m}$ is constant, with value $P\in\{(1/2,0),(-1/2,0)\}$. Let $\phi_{m}$ be the sequence defined in \eqref{Formula: vm}, and consider also the sequence
	\begin{align*}
		\psi_{m}(X):=\frac{\psi(X_{0}+r_{m}X)}{r_{m}^{3/2}}.
	\end{align*}	
	Note that each $\psi_{m}$ is a variational solution of the problem
	\begin{align*}
		\begin{cases}
			\operatorname{div}\left(\frac{1}{x_{0}+r_{m}x}\nabla\psi_{m}\right)=-(x_{0}+r_{m}x)r_{m}^{1/2}\psi_{m}f(r_{m}^{3/2}\psi_{m})\quad&\text{ in }\quad B_{\delta/r_{m}}\cap\{\psi_{m}>0\},\\
			|\nabla\psi_{m}|^{2}=(x_{0}+r_{m}x)^{2}y\quad&\text{ on }\quad B_{\delta/r_{m}}\cap\partial\{\psi_{m}>0\}.
		\end{cases}
	\end{align*}
	It is easy check that $\phi_{m}$ is a scalar multiple of $\psi_{m}$, that is,
	\begin{align}\label{Formula: vm(8)}
		\phi_m(X)=\frac{\psi_m(X)}{\sqrt{\int_{\partial B_1}\tfrac{1}{x_0+r_mx}\psi_m^2d\mathcal{H}^{1}}}.
	\end{align}
	Since $X_{m}\in\Sigma_{\psi}$, it follows that $P\in\Sigma_{\psi_{m}}$. Therefore, Proposition \ref{Proposition: fre(2)} (1) shows that
	\begin{align*}
		D_{P,\psi_{m}}(r)\gg V_{P,\psi_{m}}(r)+\frac{3}{2}-C_{1}r^{2}\quad\text{ for all }r\in(0,r_{0}),
	\end{align*}
	where $r_{0}\in(0,1/2)$ is sufficiently small and $C_{1}>0$ is a positive constant. It then follows from Lemma \ref{Lemma: V(r)} that $V(r)=\widetilde{V}(r)+\frac{1}{2}Z(r)$ for all $r\in(0,r_{0})$ and therefore,
	\begin{align*}
		D_{P,\psi_{m}}(r)&\gg\widetilde{V}_{P,\psi_{m}}(r)+\frac{1}{2}Z_{P,\psi_{m}}(r)+\frac{3}{2}-C_{1}r^{2}\\
		&\gg\frac{3}{2}+\frac{1}{2}Z_{P,\psi_{m}}(r)-C_{1}r^{2},
	\end{align*}
	where we have used the fact that $\widetilde{V}_{P,\psi_{m}}(r)\gg0$. Also observe that
	\begin{align*}
		r\int_{B_{r}(P)}(x_{0}+r_{m}x)r_{m}^{1/2}\psi_{m}f(r_{m}^{3/2}\psi_{m})dX&\leqslant Crr_{m}^{2}\int_{B_{r}(P)}\frac{1}{x_{0}+r_{m}x}\psi_{m}^{2}dX\\
		&\leqslant Cr^{2}r_{m}^{2}\int_{\partial B_{r}(P)}\frac{1}{x_{0}+r_{m}x}\psi_{m}^{2}d\mathcal{H}^{1}
	\end{align*}
	where we have used \eqref{Formula: f(z)} and \eqref{Formula: vort(2)} in the second and the third inequality. Therefore,
	\begin{align*}
		r\int_{B_{r}(P)}\frac{1}{x_{0}+r_{m}x}|\nabla\psi_{m}|^{2}dX\gg\1\frac{3}{2}+\frac{1}{2}Z(r)-C_{1}r^{2}-Cr^{2}r_{m}^{2}\2\int_{\partial B_{r}(P)}\frac{1}{x_{0}+r_{m}x}\psi_{m}^{2}d\mathcal{H}^{1}.
	\end{align*}    
	Recalling \eqref{Formula: vm(8)}, we obtain
	\begin{align*}
		r\int_{B_{r}(P)}\frac{1}{x_{0}+r_{m}x}|\nabla\phi_{m}|^{2}dX\gg\1\frac{3}{2}+\frac{1}{2}Z(r)-C_{1}r^{2}-Cr^{2}r_{m}^{2}\2\int_{\partial B_{r}(P)}\frac{1}{x_{0}+r_{m}x}\phi_{m}^{2}d\mathcal{H}^{1}.
	\end{align*}
	It is a consequence of Proposition \ref{Proposition: strong convergence} that $\phi_{m}$ converges strongly in $W^{1,2}(B_{1/4}(P))$ to $\phi_{0}$ given by \eqref{Formula: freq}, hence
	\begin{align*}
		r\int_{B_{r}(P)}\frac{1}{x_{0}}|\nabla\phi_{0}|^{2}dX\gg\1\frac{3}{2}+\frac{1}{2}Z(r)-C_{1}r^{2}\2\int_{\partial B_{r}(P)}\frac{1}{x_{0}}\phi_{0}^{2}d\mathcal{H}^{1}.
	\end{align*}
	However, this contradicts with the fact that
	\begin{align*}
		\lim_{r\to0+}\frac{r\int_{B_{r}(P)}|\nabla\phi_{0}|^{2}dX}{\int_{\partial B_{r}(P)}\phi_{0}^{2}d\mathcal{H}^{1}}=1,
	\end{align*}
	where we have used the fact that $\lim_{r\to0+}Z(r)=0$.
\end{proof}
The next theorem states that the horizontally flat singularities do not exist.
\begin{theorem}
	Let $\psi$ be a weak solution of the problem \eqref{Formula: model problem1} with the growth assumption \eqref{Formula: growth assumption}. Assume that the nonlinearity $f$ satisfies \eqref{Formula: f(z)} and that the free boundary satisfies the Assumption \ref{Assumption: curve assumption}. Then $\Sigma_{\psi}=\0$.
\end{theorem}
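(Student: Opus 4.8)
The plan is to argue by contradiction: assume $\Sigma_\psi\neq\0$ and fix $X_0\in\Sigma_\psi$. Two facts will then be in tension. First, by Proposition \ref{Proposition: v0} there is an integer $N:=N(X_0)\gg2$ with $H_{X_0,\psi}(0+)=N$, and for every $r_m\to0+$ the rescalings $\phi_m$ from \eqref{Formula: vm} converge strongly in $W_{\mathrm{loc}}^{1,2}(B_1\sm\{0\})$ and weakly in $W^{1,2}(B_1)$ to the continuous $N$-homogeneous function
\[
	\phi_0(\rho,\theta)=\frac{\sqrt{x_0}\,\rho^{N}\,\left|\sin\!\big(N\min(\max(\theta,0),\pi)\big)\right|}{\sqrt{\int_0^{\pi}\sin^2(N\theta)\,d\theta}}.
\]
Because $N\gg2$, the ray $R:=\{\theta=\pi/N\}$ lies in $\{y>0\}$; on $R$ the function $\phi_0$ vanishes, while $\phi_0>0$ on both sides of $R$, so the nonnegative Radon measure $\Delta\phi_0$ (Proposition \ref{Proposition: strong convergence}) carries positive mass on $R$, and $\phi_0$ is not harmonic in any neighbourhood of the point $Q_0:=\tfrac12(\cos(\pi/N),\sin(\pi/N))\in R$. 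Second, since $X_0\in\Sigma_\psi$ means $\Phi(0+)=x_0\int_{B_1}y^{+}\,dX$, the Remark following Assumption \ref{Assumption: curve assumption} applies: in $B_{r_0}(X_0)$ the free boundary is an injective curve $\gamma=(\gamma_1,\gamma_2)$ with $\gamma(0)=X_0$, $\gamma_1(t)\neq x_0$ for $0<|t|<t_1$, and $\lim_{t\to0}\gamma_2(t)/(\gamma_1(t)-x_0)=0$.

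The key step is to show that the interior zero-ray $R$ is reflected in the actual free boundary of $\psi$: along a suitable sequence $r_m\to0+$ there are points $P_m\in\p\{\psi>0\}$ with $(P_m-X_0)/r_m\to Q_0$. If not, then for each $j\gg3$ one finds a subsequence along which the ball $B_{1/j}(Q_0)\cc B_1\sm\{0\}$ contains no free-boundary point of $\phi_m$, so $\phi_m>0$ throughout $B_{1/j}(Q_0)$ or $\phi_m\equiv0$ throughout it. In the first case $\phi_m$ solves $\dvg\1(x_0+r_mx)^{-1}\grad\phi_m\2=O(r_m)\to0$ on $B_{1/j}(Q_0)$ (Assumption \ref{Assumption: f} together with the $W^{1,2}$-bound of Proposition \ref{Proposition: vm}), and passing to the limit with the strong convergence of Proposition \ref{Proposition: strong convergence} forces $\Delta\phi_0=0$ on $B_{1/j}(Q_0)$ --- contradicting the non-harmonicity noted above; in the second case the strong $W_{\mathrm{loc}}^{1,2}$ convergence forces $\phi_0\equiv0$ on $B_{1/j}(Q_0)$, which is impossible because $\phi_0>0$ at points arbitrarily close to $Q_0$. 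Hence for every $j$ and all large $m$ the free boundary of $\phi_m$ meets $B_{1/j}(Q_0)$, and a diagonal extraction yields $X_m'\in\p\{\phi_m>0\}$ with $X_m'\to Q_0$; then $P_m:=X_0+r_mX_m'\in\p\{\psi>0\}$, $P_m\to X_0$, and $(P_m-X_0)/r_m=X_m'\to Q_0$.

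To close the argument, for $m$ large $P_m\in B_{r_0}(X_0)$, so $P_m=\gamma(t_m)$ for some $t_m\to0$, $t_m\neq0$, and
\[
	\frac{\gamma_2(t_m)}{\gamma_1(t_m)-x_0}=\frac{(X_m')_2}{(X_m')_1}\longrightarrow\frac{\sin(\pi/N)}{\cos(\pi/N)},
\]
which is a nonzero real number if $N\gg3$ and equals $+\8$ if $N=2$ (since then $(X_m')_1\to0$ while $(X_m')_2\to\tfrac12$); in every case this limit is not $0$, contradicting the Remark following Assumption \ref{Assumption: curve assumption}. Therefore $\Sigma_\psi=\0$, as claimed. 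The main obstacle is the key step, i.e.\ extracting honest free-boundary points of $\psi$ that rescale onto the interior zero-ray of $\phi_0$: this is precisely where the hypothesis $N\gg2$ is used --- it guarantees that $\phi_0$ has a zero-ray strictly inside $\{y>0\}$ across which $\phi_0$ fails to be harmonic --- and it relies on the strong $W_{\mathrm{loc}}^{1,2}$ convergence of Proposition \ref{Proposition: strong convergence}; with only weak convergence the dichotomy ``positive on a fixed ball $\Rightarrow$ harmonic in the limit'' versus ``zero on a fixed ball $\Rightarrow$ vanishing in the limit'' breaks down.
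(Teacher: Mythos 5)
Your proof reaches the correct conclusion using the same three ingredients as the paper --- the blow-up form \eqref{Formula: vr} with $N(X_0)\geq 2$ from Proposition \ref{Proposition: v0}, the interior bound $|\dvg(\tfrac{1}{x_0+rx}\nabla\phi_r)|\leq C_1 r\phi_r$ on $\{\phi_r>0\}$ coming from \eqref{Formula: vm(2)(1)} and Assumption \ref{Assumption: f}, and the tangentiality $\gamma_2(t)/(\gamma_1(t)-x_0)\to 0$ from the Remark after Assumption \ref{Assumption: curve assumption} --- but arranges them in a more roundabout order. The paper's proof is a direct composition: tangentiality gives $\phi_r>0$ on any $\tilde B\cc B_1\cap\{y>0\}$ for $r$ small; the PDE bound and weak convergence then give $\Delta\phi_0=0$ on $\tilde B$; and this contradicts \eqref{Formula: vr} because $N\geq2$ forces zero-rays of $\phi_0$, carrying positive mass of $\Delta\phi_0$, strictly inside $\{y>0\}$. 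Your ``key step'' runs the implication in the opposite direction: assuming no free-boundary points of $\psi$ rescale onto the ray $\{\theta=\pi/N\}$, you split into the same two branches the paper's argument quietly uses (``$\phi_m>0$ near $Q_0$ $\Rightarrow$ $\Delta\phi_0=0$ there'' and ``$\phi_m\equiv0$ near $Q_0$ $\Rightarrow$ $\phi_0\equiv0$ there''), rule out both, and then re-derive a contradiction from the tangentiality by translating the resulting boundary points back into values of $\gamma$. This is logically sound, but the dichotomy is doing work that the curve assumption already hands you for free --- tangentiality forbids any rescaled boundary point from approaching a fixed $Q_0\in B_1\cap\{y>0\}$ --- so what you have is a valid but unnecessarily nested contradiction rather than a genuinely new route.

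One claim in your closing remark is incorrect: the two branches of your dichotomy do \emph{not} rely on the strong $W^{1,2}_{\mathrm{loc}}(B_1\sm\{0\})$ convergence of Proposition \ref{Proposition: strong convergence}. The branch ``$\phi_m>0$ on a fixed ball $\Rightarrow$ $\Delta\phi_0=0$ there'' uses only weak $W^{1,2}(B_1)$ convergence (so that $\dvg(\tfrac{1}{x_0+r_m x}\nabla\phi_m)\rightharpoonup\tfrac{1}{x_0}\Delta\phi_0$ in the sense of distributions) together with the $L^2$-bound $|\dvg(\cdots)|\leq C r_m\phi_m\to 0$; and the branch ``$\phi_m\equiv0$ on a fixed ball $\Rightarrow$ $\phi_0\equiv0$ there'' uses only the compact embedding $W^{1,2}(B_1)\hookrightarrow L^2_{\mathrm{loc}}(B_1)$ and the continuity of $\phi_0$. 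The strong convergence of Proposition \ref{Proposition: strong convergence} is what is needed to pass to the limit in the domain variation and thereby prove Proposition \ref{Proposition: v0}; once Proposition \ref{Proposition: v0} is available, the final step of the theorem is a soft compactness argument.
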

\begin{proof}
	The idea of the proof mainly borrows from \cite[Theorem 10.1]{VW2012}
	
	Suppose towards a contradiction that there exists a point $X_{0}\in\Sigma_{\psi}$. We infer from Proposition \ref{Proposition: v0} that there exists an integer $N(X_{0})\gg2$ such that
	\begin{align}\label{Formula: vr}
		\begin{split}
			\phi_{r}(X)&:=\frac{\psi(X_{0}+rX)}{\sqrt{r^{-1}\int_{\partial B_{r}(X_{0})}\tfrac{1}{x}\psi^{2}d\mathcal{H}^{1}}}\to\frac{\sqrt{x_{0}}\rho^{N(X_{0})}|\sin(N(X_{0})\min(\max(\theta,0),\pi))|}{\sqrt{\int_{0}^{\pi}\sin^{2}(N(X_{0})\theta)d\theta}}\quad\text{ as }\quad r\to0+,
		\end{split}
	\end{align}
	strongly in $W_{\mathrm{loc}}^{1,2}(B_{1}\setminus\{0\})$ and weakly in $W^{1,2}(B_{1})$, where $X=(\rho\cos\theta,\rho\sin\theta)$. On the other hand, it follows from that for any $\tilde{B}\subset B_{1}\cap\{y>0\}$, $\phi_{r}>0$ in $\tilde{B}$ for sufficiently small $r$. Consequently, recalling \eqref{Formula: vm(2)(1)},
	\begin{align*}
		\left|\operatorname{div}\left(\frac{1}{x_{0}+rx}\nabla\phi_{r}\right)\right|\leqslant C_{1}r\phi_{r}\quad\text{ in }\quad\tilde{B}
	\end{align*}
	for sufficiently small $r$. It follows that $\Delta\phi_{0}=0$ in $\tilde{B}$. This yields a contradiction to \eqref{Formula: vr} since $N(X_{0})\geqslant2$. Hence $\Sigma_{\psi}$ is indeed empty.
\end{proof}


\begin{thebibliography}{60}
\bibitem{A2000} F. J. Almgren, \newblock Almgren's Big Regular Paper, World Scientific Monograph Series in Mathematcis, 1. World Scientific, River Edge, NJ, (2000). 
\bibitem{AC1981} H. W. Alt, L. A. Caffarelli, \newblock Existence and regularity for a minimum problem with free boundary, {\it J. Reine Angew. Math.}, {\bf 325}, 105-144, (1981).
\bibitem{D1992} Delort, J.-M., \newblock Une remarque sur le probleme des nappes de tourbillon axisymetriques sur $\mathbb{R}^{3}$, {\it J. Funct. Anal.}, {\bf 108}, (1992).
\bibitem{DHP2022} L. L. Du, J. L. Huang, Y. Pu, \newblock The free boundary of steady axisymmetric inviscid flow with vorticity I: near the degenerate point, {\it Commun. Math. Phys.}, {\bf 400}, 2137-2179, (2023).
\bibitem{DY2023} L. L. Du, C. L. Yang, \newblock The free boundary of steady axisymmetric inviscid flow with vorticity II: near the non-degenerate points, {\it  arXiv:2310.09477v1}.
\bibitem{GL1986} N. Garofalo, F. H. Lin, \newblock Monotonicity properties of variational integrals: $A_{p}$ weights and unique continuation, {\it Indiana Univ. Math. J}, {\bf 35}, no. 2, 245-268, (1986).
\bibitem{GL1987} N. Garofalo, F. H. Lin, \newblock Unique continuation for elliptic operators: a geometric-variational approach, {\it Comm. Pure Appl, Math}, {\bf XL}, 347-366, (1987).
\bibitem{GT1983} D. Gilbarg, N S. Trudinger, \newblock Elliptic Partial Differential Equations of Second Order, Second Edition, {\it Grundlehren der Mathematischen Wissenschaften}, {\bf 224}, Springer-Verlag, Berlin, (1983).
\bibitem{VW2011} V\u{a}rv\u{a}ruc\u{a}, G. Weiss,  \newblock A geometric approach to generalized Stokes conjectures, {\it Acta Math.}, {\bf 206}(2), 363-403, (2011).
\bibitem{VW2012} E. V\u{a}rv\u{a}ruc\u{a}, G. Weiss,   \newblock The stokes conjecture for waves with vorticity, {\it Ann. Inst. H. Poincar\'{e}.}, {\bf 29}, 861-885, (2012).
\bibitem{VW2014} E. V\u{a}rv\u{a}ruc\u{a}, G. Weiss,  \newblock Singularities of steady axisymmetric free surface flows with gravity, {\it Comm. Pure Appl. Math}, {\bf 67}(8), 1263-1306, (2014).
\end{thebibliography}
\end{document}